\providecommand{\keywords}[1]
{
\small
\textbf{\textit{Keywords---}} #1

}
\setlist[enumerate,1]{label=(\roman*),font=\normalfont}
\newcommand{\cv}{\xrightarrow[t\to\infty]{}}
\newcommand{\cd}{\xrightarrow[t\to\infty]{\mathrm{(d)}}}
\newcommand{\cvd}{\xrightarrow[t\to\infty]{\mathrm{(vd)}}}
\newcommand{\cvdnot}{\xrightarrow[]{\mathrm{(vd)}}}
\newcommand{\cwdnot}{\xrightarrow[]{\mathrm{(wd)}}}
\newcommand{\e}{\mathrm{e}}
\newcommand{\es}{\mathbb{E}}
\newcommand{\de}{\coloneqq}
\newcommand{\ds}{\mathrm{d}s}
\newcommand{\dt}{\mathrm{d}t}
\newcommand{\dx}{\mathrm{d}x}
\newcommand{\g}{\mathcal G_{\beta,t}\otimes \mathcal G_{\beta',t}}
\newcommand{\G}{\mathcal G_{\beta,t}}
\newcommand{\Gp}{\mathcal G_{\beta',t}}
\newcommand{\GG}{\mathcal G_{\beta,t}^{\otimes 2}}
\newcommand{\gbg}{\mathcal G_{\beta,t}\otimes \mathcal G_{\beta',t}}
\newcommand{\gru}{\gamma_r(u)}
\newcommand{\grv}{\gamma_r(v)}
\newcommand{\gtu}{\gamma_t(u)}
\newcommand{\gtv}{\gamma_t(v)}
\newcommand{\ind}{\mathds{1}}
\newcommand{\norm}{\mathcal N(0,1)}
\newcommand{\nr}{\mathcal N_r}
\newcommand{\ns}{\mathcal N_s}
\newcommand{\nt}{\mathcal N_t}
\newcommand{\p}{\mathbb{P}}
\newcommand{\q}{Q(\beta,\beta')}
\newcommand{\qr}{Q^{\mathrm{REM}}(\beta,\beta')}
\newcommand{\re}{\mathbb{R}}
\newcommand{\sd}{\overset{\mathrm{(d)}}{=}}
\newcommand{\summ}{\sum_{u\in\nt}}
\newcommand{\sumt}{\sum_{u\in\nt^t} }
\newcommand{\sumw}{\sum_{w\in\nt}}
\newcommand{\sumd}{\sum_{u\in\nt(D)}}
\newcommand{\tds}{\xrightarrow[t\to\infty]{}}
\newcommand{\txt}{\tilde{x}(t)}
\newcommand{\txtt}{\tilde{x}^t(t)}
\newcommand{\txtu}{\tilde{x}_u(t)}
\newcommand{\U}{\mathcal U}
\theoremstyle{plain}
\newtheorem{thm}{Theorem}[section]
\newtheorem{prop}[thm]{Proposition}
\newtheorem{lem}[thm]{Lemma}
\theoremstyle{definition}
\theoremstyle{remark}
\newtheorem{rem}[thm]{Remark}
\date{January 4, 2022}
\title{The overlap distribution at two temperatures for the branching Brownian motion}
\author{
	Benjamin \textsc{Bonnefont}\thanks{Sorbonne Universit\'e, Laboratoire de Probabilit\'es Statistique et Mod\'elisation, LPSM, 75005 Paris, France. Email: \texttt{benjamin.bonnefont@upmc.fr}
}}
\begin{document}
	
\maketitle

\begin{abstract}
\noindent We study the overlap distribution of two particles chosen under the Gibbs measure at two temperatures for the branching Brownian motion. We first prove the convergence of the overlap distribution using the extended convergence of the extremal process obtained by Bovier and Hartung \cite{bovierhartung2017}. We then prove that the mean overlap of two points chosen at different temperatures is strictly smaller than in Derrida's random energy model. The proof of this last result is achieved with the description of the decoration point process obtained by A\"id\'ekon, Berestycki, Brunet and Shi \cite{abbs2013}. To our knowledge, it is the first time that this description is being used.
\end{abstract}

\keywords{Branching Brownian motion, Gibbs measure, overlap distribution, random energy model.}

%{
%	\hypersetup{linkcolor=black}
%	\tableofcontents
%} 

%%%%%%%%%%%%%%%%%%%%%%%%%%%%%%%%%%%%%%

\section{Introduction}

%%%%%%%%%%%%%%%%%%%%%%%%%%%%%%%%%%%%%%

%%%%%%%%%%%%%%%%%%%%%%%%%%%%%%%%%%%%%%

\subsection{Overview}

%%%%%%%%%%%%%%%%%%%%%%%%%%%%%%%%%%%%%%

The (binary) branching Brownian motion (BBM) describes a system of particles that starts with a single one at $0$ which moves as a standard Brownian motion and splits into two new particles after a mean-one exponential time. These two particles then move independently according to Brownian motions and split with the same rule. It appears that the BBM belongs to a more general class of models, called log-correlated Gaussian fields, for which the dependence between the random variables starts to affect the extreme value statistics. In this class, one finds the branching random walk (BRW) and the two-dimensional discrete Gaussian free field (DGFF). Among all these models, it is commonly accepted that the BBM, with its branching structure and its brownian trajectories, is more manageable. We refer to \cite{arguin_2016} for more details on log-correlated fields.\\
\indent For $t\geq 0$, let $\mathcal N_t$ be the set of particles alive at time $t$ and let $x_u(t)$ denote the position of a particle $u\in\mathcal N_t$ at time $t$. One can interpret the position of the particles of the BBM as an energy, and in statistical physics, it is common to introduce the partition function and the corresponding Gibbs measure to study the extreme values. For $\beta>0$ and $t\geq0$, they are defined by
\[
Z_{\beta,t}\de\summ \e^{\beta x_u(t)},\hspace{20mm}
\G \de \frac{1}{Z_{\beta,t}} \summ \e^{\beta x_u(t)}\delta_u.
\]
One crucial quantity to investigate the energy landscape at the thermodynamical limit is the overlap $q_t(u,v)$ between the particles $u$ and $v$ and more specifically the following random probability measure
\[
\GG (q_t(u,v)\in \cdot),
\]
which is the distribution of the overlap between particles $u$ and $v$ chosen independently under the Gibbs measure $\G$ (see next subsection for precise definitions).
In their widely known article \cite{derridaspohn88}, Derrida and Spohn showed that this random measure converges to a limit whose support is $\{0,1\}$ in the low-temperature regime, exhibiting a one-step replica symmetry breaking (1-RSB) in the langage of spin glasses. Curiously enough, the limiting overlap behaves in the same way as in the i.i.d. case, the so-called random energy model (REM). This model was introduced by Derrida \cite{Derrida1981} as a toy-model to understand more complex spin glasses. As we will see, this is no longer the case when the temperatures are different.\\

In a very recent article, Derrida and Mottishaw \cite{DerridaMottishaw_2021} gave a thorough analysis of the overlap between two copies of the same REM at two temperatures.
The distribution of the overlap between two points sampled independently
according to Gibbs measures at temperatures $\beta$ and $\beta'$ is in our settings
\[
\gbg (q_t(u,v)\in \cdot).
\]
\noindent
This object originally appeared in the study of the temperature chaos problem, see for example \cite{Rizzo2009} for a survey. For the DGFF, Pain and Zindy \cite{PainZindy21} showed the convergence of the distribution of the two-temperature overlap and proved that its mean value is strictly smaller than the REM's, when the temperatures are different and below the critical temperature, contrary to the one-temperature case. In this paper, we are interested in the same questions but for the BBM. Even if this model is hierarchical and usually simpler to study, the difficulty here is to deal with the decoration process whose description is less explicit than for the DGFF.\\

Let us mention that there are two different approaches to tackle the limiting overlap at one temperature. The first one is to prove 1-RSB and then to recover Poisson-Dirichlet statistics thanks to Ghirlanda-Guerra identities, see \cite{arguinzindy2014}. This is the method used by Bovier and Kurkova \cite{bovierkurkova2004-2} for the BBM, by Arguin and Zindy \cite{arguinzindy2015} for the DGFF, and by Jagannath \cite{jagannath2016} for the BRW with Gaussian increments.
A second method is to use the now established convergence of the extremal process of these models towards a \textit{randomly shifted decorated Poisson point process}, see \cite{abbs2013} and \cite{abk2013} for the BBM, \cite{biskuplouidor2018} for the DGFF and \cite{madaule2017} for the BRW. It suggests a candidate for the limiting overlap and gives a strong support to prove the convergence. For instance, Mallein \cite{Mallein2018} proved the same result for the BRW using the convergence of the extremal process obtained by Madaule \cite{madaule2017}. However, it is not clear how the first of these approaches could be adapted in the multiple temperature case.\\

In this paper, we follow the same approach for the branching Brownian motion that Pain and Zindy \cite{PainZindy21} used for the DGFF. We show that the two-temperature overlap distribution converges to the one from the limiting process obtained by Bovier \& Hartung in \cite{bovierhartung2017} and that its expectation is strictly smaller than the REM's when the two temperatures are different.

%%%%%%%%%%%%%%%%%%%%%%%%%%%%%%%%%%%%%%
\subsection{Model and results}
%%%%%%%%%%%%%%%%%%%%%%%%%%%%%%%%%%%%%%

One way to construct a (binary) BBM is to realize it as a process decorating the infinite binary tree $\U=\cup_{n\in\mathbb Z_+}\{0,1\}^n$, with the convention $\{0,1\}^0=\{\varnothing\}$. We use $0$ and $1$ here instead of the classical Ulam-Harris notation because it will be more convenient for the definition of the genealogical embedding function $\gamma$ to come. For $u\in\U$, let $|u|$ be the length of $u$ and for $k\leq |u|$, let $u_k\in\{0,1\}$ be the $k$-th component of $u$. Write $u\leq v$ if $u$ is an ancestor of $v$ and $u\wedge v$ for the last common ancestor of $u$ and $v$.
For each $u\in\mathcal U$, let $b_u$ be the birth-time of $u$ and $d_u$ its death-time and for all $t\geq 0$,
let $\nt\de\{u\in\mathcal U: b_u\leq t<d_u \}$ be the set of particles alive at time $t$ and $x_u(t)$ the position of particle $u$ at time $t$. Ikeda et al. \cite{ikedanagasawawatanabe68a,ikedanagasawawatanabe68b,ikedanagasawawatanabe69} proved that there exists a probability space $(\Omega,\mathcal F,\mathbb P)$ such that the trajectories are Brownian motions and the underlying tree $T=(T_t)_{t\geq 0}$ is a binary continuous time Galton-Watson tree with branching rate $1$.\\

The position of the highest particle $x(t)\de\max_{u\in\nt} x_u(t)$ has been the subject of intense studies since McKean \cite{mckean75} who linked the distribution function of $x(t)$ with the F-KPP partial differential equation. Then, Bramson \cite{bramson78} obtained the right centering term $m_t\de\sqrt 2 t-(3/2\sqrt 2)\log t$ and Lalley and Sellke \cite{lalleysellke87} obtained an integral representation of the limiting law using the limiting \textit{derivative martingale} $Z:=\lim\limits_{t\to\infty}\summ (\sqrt2 t-x_u(t))\e^{-\sqrt2(\sqrt2 t-x_u(t))}$.
A new step has been taken with the proof of the convergence of the \textit{extremal process}
\[
\mathcal E_t\de\summ \delta_{\tilde{x}_u(t)},\qquad \text{where~~}  \tilde{x}_u(t)\de x_u(t)-m_t,
\]
in the space of Radon measures on $\re$ endowed with the vague topology, to a randomly shifted decorated Poisson point process, simultaneously in \cite{abbs2013} and \cite{abk2013}. More precisely,
\begin{equation}\label{cv}
\mathcal E_t \cvd \sum_{i,j}\delta_{p_i+\Delta^i_j},
\end{equation}
\noindent
where the $(p_i)_{i \geq 0}$ are the atoms of a Cox process on $\re$ with intensity measure $CZe^{-\sqrt 2x} \dx$, $C$ a positive constant, $Z$ is the limiting derivative martingale introduced before and $(\Delta^i)_{i \geq 0}$ are i.i.d. point processes on $\re_-$ called \textit{decoration processes}.\\
Here and after, the set of summation of index like $i$, $j$ or $k$ is assumed to be $\mathbb Z_+$ unless otherwise specified and one identifies a simple point process with the set of its atoms.
We also use, as above, the superscript $\mathrm{(vd)}$ for the convergence in distribution of random measures with respect to the vague topology and $\mathrm{(wd)}$ with respect to the weak topology, as in the setting of \cite[Chapter 4]{kallenberg2017}.
\\

Let us recall, from the previous section, the following quantities defined for $\beta>0$ and $t>0$
\begin{align}\label{zfg}
Z_{\beta,t}\de\summ \e^{\beta x_u(t)},\hspace{10mm}
f_{t}(\beta)\de\frac{1}{t}\,\es[\log(Z_{\beta,t})],\hspace{10mm}
\G \de \frac{1}{Z_{\beta,t}} \summ \e^{\beta x_u(t)}\delta_u.
\end{align}
There is a phase transition at $\beta_c\de\sqrt 2$ which is related to the asymptotic speed of the extremal particle: we have the following convergence for the (averaged) free energy, see e.g. \cite{derridaspohn88},
\begin{align} \label{free}
\lim_{t\to\infty} f_t(\beta) 
= f(\beta) \coloneqq  \left\{
\begin{array}{ll}
1 + (\beta/\beta_c)^2, & \text{if } \beta \leq \beta_c, \\
2 \beta/\beta_c, & \text{if } \beta > \beta_c.
\end{array}
\right.
\end{align}
The overlap between particles $u,v\in\nt$ is defined by 
\[
q_t(u,v)\de\frac{1}{t}d_{u\wedge v}=\frac{1}{t}\es[x_u(t) x_v(t)].
\]
\noindent
It is known since \cite{abk2011} that the overlap of extremal particles is either $0$ or $1$ at the limit $t\rightarrow\infty$. 
In the following theorem, we establish the convergence of its distribution according to the Gibbs measure at two temperatures. Below the critical temperature, the limiting distribution is the one obtained from the limit of the extremal process in (\ref{cv}): the overlap is $1$ when the same cluster is chosen and $0$ otherwise.
\begin{thm} \label{thm1}
	Let $\beta,\beta'>0.$\\
	\indent
	(i) If $\beta\leq\beta_c$ or $\beta'\leq\beta_c$ and $a\in(0,1)$,
	$$\gbg (q_t(u,v)\geq a)\cv 0, \qquad in ~L^1.$$
	
	(ii) If $\beta,\beta'>\beta_c$, and $a\in(0,1)$,
	$$\gbg (q_t(u,v)\geq a)\cd \q ,$$
	where
\begin{equation}\label{overlap1} 
\q\de\dfrac{\sum_{i}\big(\sum_j \e^{\beta(p_i+\Delta^i_j)}\big)\big(\sum_j \e^{\beta'(p_i+\Delta^i_j)}\big)}{\big(\sum_{i,j} \e^{\beta(p_i+\Delta^i_j)}\big)\big(\sum_{i,j} \e^{\beta'(p_i+\Delta^i_j)}\big)}.
\end{equation}
\end{thm}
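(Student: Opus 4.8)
The plan is to combine a genealogical decomposition of the Gibbs weights with the extended convergence of the extremal process. For $s\in(0,t)$ and $w\in\mathcal N_s$, set $S_w\de\{u\in\nt:w\le u\}$ and $Z_{\mu,t}^{(w)}\de\sum_{u\in S_w}\e^{\mu x_u(t)}$, so that $Z_{\mu,t}=\sum_{w\in\mathcal N_s}Z_{\mu,t}^{(w)}$; since $q_t(u,v)\geq a$ holds precisely when $u$ and $v$ share their ancestor in $\mathcal N_{at}$,
\begin{equation*}
\gbg\big(q_t(u,v)\geq a\big)=\sum_{w\in\mathcal N_{at}}\frac{Z_{\beta,t}^{(w)}}{Z_{\beta,t}}\,\frac{Z_{\beta',t}^{(w)}}{Z_{\beta',t}}.
\end{equation*}

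For part (i), assume without loss of generality $\beta\leq\beta_c$. Bounding one of the two factors above by $1$ and summing the other, $\gbg(q_t(u,v)\geq a)\leq\max_{w\in\mathcal N_{at}}Z_{\beta,t}^{(w)}/Z_{\beta,t}$, which is bounded by $1$; hence it suffices to prove that this maximal cluster weight tends to $0$ in probability. For $\beta<\beta_c$ I would fix $q\in(1,\beta_c/\beta)$ and use $\max_w Z_{\beta,t}^{(w)}\leq\big(\sum_w (Z_{\beta,t}^{(w)})^q\big)^{1/q}$. Conditionally on $\mathcal F_{at}$ (the history up to time $at$) the variables $Z_{\beta,t}^{(w)}=\e^{\beta x_w(at)}\widehat Z^{(w)}$ are independent, with $\widehat Z^{(w)}$ distributed as $Z_{\beta,(1-a)t}$; since the additive martingale $Z_{\beta,s}\e^{-sf(\beta)}$ is bounded in $L^q$ for $q<\beta_c/\beta$ and converges to a strictly positive limit, a first-moment estimate gives $\sum_w (Z_{\beta,t}^{(w)})^q=O\big(t^2\,\e^{at(1+q^2\beta^2/2)}\e^{q(1-a)tf(\beta)}\big)$ with probability tending to $1$, while $Z_{\beta,t}\geq c\,\e^{tf(\beta)}$ with high probability. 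Therefore $\max_w Z_{\beta,t}^{(w)}/Z_{\beta,t}=O(t^{2/q}\e^{at\,\psi(q)})$ with $\psi(q)\de q^{-1}+q\beta^2/2-f(\beta)$, and $\psi(q)<0$ for $q>1$ close to $1$ because $\psi(1)=0$ and $\psi'(1)=\beta^2/2-1<0$; this settles part (i) for $\beta<\beta_c$. The endpoint $\beta=\beta_c$ is not covered by this scale-invariant estimate and requires a separate argument using the precise order $Z_{\beta_c,t}\asymp t^{-1/2}\e^{2t}$ (Seneta--Heyde norming) together with an upper-tail bound on $Z_{\beta_c,(1-a)t}$ to dominate the maximum of the $\simeq\e^{at}$ i.i.d.\ cluster weights.

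For part (ii), with $\beta,\beta'>\beta_c$, I would argue in three steps. Step 1, localization: for $\mu>\beta_c$ the left tail of $\mathcal E_t$ makes $\int\e^{\mu x}\mathcal E_t(\dx)$ integrable near $-\infty$, uniformly in $t$, so that for every $\varepsilon>0$ there is $A$ with $\mathcal G_{\mu,t}(\{u:x_u(t)-m_t<-A\})<\varepsilon$ with high probability, uniformly in $t$, for $\mu\in\{\beta,\beta'\}$; combined with \eqref{cv} this yields $\int\e^{\mu x}\mathcal E_t(\dx)\cd\sum_{i,j}\e^{\mu(p_i+\Delta^i_j)}$, a limit that is a.s.\ finite and strictly positive for $\mu>\beta_c$ (using the known integrability of the decoration processes). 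Step 2, reduction to cluster weights: fixing $r>0$, split $\gbg(q_t(u,v)\geq a)$ according to whether $u$ and $v$ have a common ancestor in $\mathcal N_{t-r}$; the "common" part equals $\sum_{w\in\mathcal N_{t-r}}\G(S_w)\Gp(S_w)$, while the remaining contribution is carried by pairs of extremal particles whose most recent common ancestor lies in the intermediate window $[at,t-r]$, and a first-moment estimate on such pairs (in the spirit of the genealogy estimates behind \cite{abk2011}) bounds it by some $\varepsilon(r)$ with $\varepsilon(r)\to0$, uniformly in large $t$. Step 3, passage to the limit: by the extended convergence of the extremal process of \cite{bovierhartung2017}, the recentred positions together with their $\mathcal N_{t-r}$-cluster labels converge in distribution, and letting $r\to\infty$ the clusters that carry positive Gibbs mass are exactly the decoration clusters $\{p_i+\Delta^i_j\}_j$ of the limit in \eqref{cv}; together with Step 1 this gives $\sum_{w\in\mathcal N_{t-r}}\G(S_w)\Gp(S_w)\cd L_r$ with $L_r\to\q$ (the quantity in \eqref{overlap1}) as $r\to\infty$, whence $\gbg(q_t(u,v)\geq a)\cd\q$, the limit being independent of $a\in(0,1)$.

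The main obstacle is Steps 2--3 of part (ii): identifying the finite-time cluster structure (ancestors at time $t-r$) with the decoration clusters of the limit and, in particular, ruling out Gibbs mass located at intermediate overlaps $q_t(u,v)\in(a,1)$, uniformly in $t$ as $r\to\infty$. This is precisely the control that the extended convergence of \cite{bovierhartung2017} is meant to supply, and the work lies in turning it into a statement about the overlap functional, which involves the unbounded test functions $\e^{\beta x}$ and $\e^{\beta' x}$. A secondary difficulty is the endpoint $\beta=\beta_c$ in part (i), where the scale-invariant bound degenerates and the sharp asymptotics of $Z_{\beta_c,t}$ must be used.
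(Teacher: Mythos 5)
Your decomposition $\gbg(q_t(u,v)\geq a)=\sum_{w\in\mathcal N_{at}}\frac{Z_{\beta,t}^{(w)}}{Z_{\beta,t}}\,\frac{Z_{\beta',t}^{(w)}}{Z_{\beta',t}}\leq\max_{w\in\mathcal N_{at}}Z_{\beta,t}^{(w)}/Z_{\beta,t}$ is exactly the reduction used in the paper for Part~(i), but the two proofs diverge at that point and the divergence matters.

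For Part~(i) you attack $\max_w Z_{\beta,t}^{(w)}/Z_{\beta,t}$ directly via $L^q$ moments of the additive martingale. Your scaling computation $\psi(q)=q^{-1}+q\beta^2/2-f(\beta)$ with $\psi(1)=0$, $\psi'(1)=\beta^2/2-1<0$ is correct and does give the result for $\beta<\beta_c$, but as you acknowledge it degenerates exactly at $\beta=\beta_c$, and you do not supply the separate argument there. The paper avoids this entirely: it applies Griffiths' lemma (convexity of $\beta\mapsto f_t(\beta)$) to get $f'_t(\beta)\to f'(\beta)=\beta$ uniformly on $\beta\leq\beta_c$, then Gaussian integration by parts (Lemma~\ref{IPP}) conditionally on the tree $T$ to identify $f'_t(\beta)$ with $\beta(1-\es[\GG(q_t)])$, giving $\es[\GG(q_t(u,v))]\to 0$ for all $\beta\leq\beta_c$ at once. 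The two-temperature statement is then deduced by noting $\max_w (\G(u\geq w))^2\leq\GG(q_t(u,v)\geq a)\leq a^{-1}\GG(q_t(u,v))$. So the paper's route covers the critical point with no extra work, whereas yours leaves a genuine gap there that would require the Seneta--Heyde normalisation of $Z_{\beta_c,t}$ and a separate tail estimate.

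For Part~(ii) your three steps identify the right challenges but do not land on the mechanism the paper actually uses. The decisive tool in the paper is Bovier and Hartung's genealogical embedding function $\gamma_t:\nt\to\re_+$ (Proposition~\ref{ext}), \emph{not} cluster labels in $\mathcal N_{t-r}$. Proposition~\ref{ext} gives the joint convergence of $\widetilde{\mathcal E}_t=\summ\delta_{(\gtu,\txtu)}$ to a decorated Poisson process on $\re_+\times\re$ whose first coordinate is driven by the non-atomic random measure $Z(\mathrm d\nu)$. The paper then (a) replaces the overlap event by the metric event $|\gtu-\gtv|\leq\delta$ via Propositions~\ref{>a}, \ref{01}, \ref{<a} (which use Lemmas 4.1, 4.3 of \cite{bovierhartung2017} and the $\{0,1\}$-dichotomy of \cite{abk2011}), (b) expresses the resulting quantity as $\rho_{\beta,t}\otimes\rho_{\beta',t}(\Delta^\delta)/\rho_{\beta,t}\otimes\rho_{\beta',t}(\re_+^2)$ with $\rho_{\beta,t}=\summ\e^{\beta\txtu}\delta_{\gtu}$, and (c) proves the \emph{weak} (not merely vague) joint convergence of $\rho_{\beta,t}\otimes\rho_{\beta',t}$, which requires controlling both the left tail of $\mathcal E_t$ (Proposition~\ref{pro1}, using the level-set bounds of \cite{cortineshartunglouidor2019}) and the total mass (Proposition~\ref{mass}). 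Your ``clusters at time $t-r$'' picture is the right heuristic for where the decoration comes from, but Bovier--Hartung's theorem is not stated in that form; the whole point of $\gamma$ is to turn the genealogical information into a point process on a fixed space so that one can pass to the limit. Your Steps 2--3, as you write, are precisely where ``the work lies,'' and they remain unexecuted; in particular you never address non-atomicity of $Z(\mathrm d\nu)$, which the paper invokes to make the identification $\rho_{\beta}\otimes\rho_{\beta'}(\Delta)/\rho_{\beta}\otimes\rho_{\beta'}(\re_+^2)=\q$ almost sure.
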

\noindent
The Part (i) of the theorem is a consequence of the convergence of the free energy and a Gaussian integration by parts.
The proof of Part (ii) uses the convergence in Equation (\ref{cv}) and an additionnal information on the genealogy of the extremal particles, which is what Bovier and Hartung obtained in \cite{bovierhartung2017}.
The authors define the following function, in a more general setting than ours,
\[
\gru\de\sum_{\substack{v\leq u:\\b_v\leq r\phantom{}}} u_{|v|}\; \e^{-b_v}, \qquad u\in\nt, \quad r\leq t.
\]
This function encodes the genealogy of the particles on $\re_+$ in the following way. First, $\gamma_0(\varnothing)=0$, then each time a particle $u\in\nt$ with value $\gtu$ splits at time $t$, one of the children keeps the same value $\gtu$ and the other one takes the value $\gtu+\e^{-t}$. If $r\leq t$ and $u\in\nt$, $\gru$ is simply $\grv$ where $v\in\nr$ is the ancestor of $u$ alive at time $r$. This way when two particles originate from a recent split, their images by the function are close. We refer to \cite{bovierhartung2017} for more details about the function $\gamma$. Bovier and Hartung obtain the following joint convergence of $\mathcal E_t$ with $(\gtu)_{u\in\nt}$:
\begin{prop}[Bovier and Hartung \cite{bovierhartung2017}]\label{ext}
Let $\widetilde{\mathcal E}_t \de \summ \delta_{(\gtu,\tilde{x}_u(t))}$, then $\widetilde{\mathcal E}_t$ converges in vague distribution to
	\[
	\widetilde{\mathcal E}\de
	\sum_{i,j} \delta_{(q_i,p_i)+(0,\Delta^i_j)},
	\]
	where $(q_i , p_i)_i$ are the atoms of a Cox process on $\mathbb R_+\times \mathbb R$ with intensity measure $Z(\mathrm d \nu) \otimes Ce^{-\sqrt 2x} \mathrm d x$,  $Z(\mathrm d \nu)$ is a random measure on $\mathbb R_+$ such that $Z(\re_+)=Z$ and $C$ and $(\Delta^i)_i$ were introduced in Equation (\ref{cv}).

\end{prop}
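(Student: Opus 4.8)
\emph{Strategy.} The proposition upgrades the convergence~(\ref{cv}) by recording, for each surviving particle, its genealogical mark $\gamma_t(u)$. Recall that~(\ref{cv}) is itself obtained by cutting the BBM at a large fixed time $r$, applying the one--BBM extremal convergence to each of the subtrees rooted at time $r$, and letting $r\to\infty$ after $t\to\infty$; the plan is to rerun this scheme while tracking the marks. Two facts make this possible: \emph{(a)} every particle descended from a given $v\in\nr$ carries, up to an error governed by branch points occurring after time $r$, the \emph{same} mark $\gamma_r(v)$; and \emph{(b)} the limiting derivative martingale $Z$ admits a consistent decomposition along the genealogy, which is precisely what yields the random measure $Z(\mathrm{d}\nu)$.

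\emph{Reduction.} Fix $r>0$ and condition on $\mathcal F_r$, the natural filtration of the BBM. Write $\nr=\{v_1,\dots,v_{N_r}\}$ with $N_r<\infty$ a.s.: after time $r$ the process decomposes into $N_r$ independent BBMs, the $\ell$--th one issued from $x_{v_\ell}(r)$. If $u\in\nt$ is descended from $v_\ell$, then
\[
\gamma_t(u)=\gamma_r(v_\ell)+\bigl(\gamma_t(u)-\gamma_r(u)\bigr),\qquad 0\le\gamma_t(u)-\gamma_r(u)\le\sum_{\substack{w\le u\\ b_w>r}}\e^{-b_w}.
\]
A first--moment computation on the ancestral lines of near--extremal particles, using Bramson's barrier (which confines those lines and thus lets one bound the number of their branch points in each unit time interval after $r$), shows that for every fixed $M$ this last sum is, uniformly over the tight set $\{u\in\nt:\txtu\ge-M\}$, at most some $\varepsilon_r$ with $\p(\varepsilon_r\le\varepsilon)\to1$ when $t\to\infty$ and then $r\to\infty$. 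Setting $\mathcal E_t^{(\ell)}\de\sum_{u\in\nt:\,v_\ell\le u}\delta_{x_u(t)-m_t}$, it follows that for every $\varphi\in C_c^+(\re_+\times\re)$
\[
\widetilde{\mathcal E}_t(\varphi)=\sum_{\ell=1}^{N_r}\mathcal E_t^{(\ell)}\bigl(\varphi(\gamma_r(v_\ell),\cdot)\bigr)+o_{\p}(1),
\]
the remainder vanishing after $t\to\infty$ then $r\to\infty$, by uniform continuity of $\varphi$, the bound above, and the tightness of $\widetilde{\mathcal E}_t\bigl([0,M]\times[-M,\infty)\bigr)$, itself a consequence of~(\ref{cv}) and of the tightness of $\max_{u\in\nt}\txtu$.

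\emph{Identification of the limit.} Apply~(\ref{cv}) to each $\mathcal E_t^{(\ell)}$: since $m_t-m_{t-r}\to\sqrt2\,r$, conditionally on $\mathcal F_r$ and as $t\to\infty$, $\mathcal E_t^{(\ell)}$ converges to a decorated Poisson process on $\re$ with intensity $C\,\e^{-\sqrt2(\sqrt2 r-x_{v_\ell}(r))}Z^{(\ell)}\,\e^{-\sqrt2 x}\,\dx$, the $(Z^{(\ell)})_\ell$ being i.i.d.\ copies of $Z$, the decorations the same $\Delta$'s as in~(\ref{cv}), and the $\ell$'s independent given $\mathcal F_r$. Combined with the previous display, $\widetilde{\mathcal E}_t$ converges (conditionally on $\mathcal F_r$, up to $o_{\p}(1)$) to the point process on $\re_+\times\re$ obtained by adding i.i.d.\ decorations in the $x$--coordinate only to the atoms of a Poisson process with intensity $\mu_r(\mathrm{d}\nu)\otimes C\e^{-\sqrt2 x}\dx$, where
\[
\mu_r(\mathrm{d}\nu)\de\sum_{\ell=1}^{N_r}\e^{-\sqrt2(\sqrt2 r-x_{v_\ell}(r))}Z^{(\ell)}\,\delta_{\gamma_r(v_\ell)}(\mathrm{d}\nu).
\]
Applying the branching decomposition of the derivative martingale (first at time $r$, then inside each subtree), and using that the critical additive martingale of the BBM has limit $0$, shows that $\mu_r(\re_+)=Z$ for every $r$ and that $\mu_r(\{\gamma_r(v_\ell)\})$ equals the $\mu_{r'}$--mass of the $\varepsilon_r$--neighbourhood of $\gamma_r(v_\ell)$ for all $r'\ge r$. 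Since $\varepsilon_r\to0$, the family $(\mu_r)_r$ is Cauchy for the weak topology and converges a.s.\ to a random measure $Z(\mathrm{d}\nu)$ on $\re_+$ with $Z(\re_+)=Z$; letting $r\to\infty$ (the interchange with $t\to\infty$ being licensed by the uniform--in--$t$ estimates above) identifies the limit of $\widetilde{\mathcal E}_t$ with $\widetilde{\mathcal E}$.

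\emph{Main obstacle.} The one genuinely model--dependent step is fact~\emph{(a)} made quantitative: ruling out that near--extremal particles accumulate appreciable $\gamma$--mass from branch points after time $r$, uniformly as $t\to\infty$ — equivalently, that extremal clusters are ``genealogically thin'' at scale $r$. Here the binary branching structure and a Bramson--type barrier on the ancestral lines of high particles do the work; granted this localization of the marks, the remainder (the measure--valued martingale convergence of $\mu_r$ and the diagonal argument in $r,t$) is routine bookkeeping of Laplace functionals.
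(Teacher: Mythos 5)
The paper does not prove this proposition: it is cited verbatim from Bovier and Hartung \cite{bovierhartung2017} (their extended convergence of the extremal process together with the genealogical marks) and then used as a black box. There is therefore no in-paper argument to compare against. That said, your sketch does reconstruct, in outline, the route actually taken in \cite{bovierhartung2017}: cut the tree at a large time $r$, show that near-extremal descendants of any $v\in\nr$ all carry essentially the mark $\gamma_r(v)$, apply the one-process convergence~(\ref{cv}) to each of the $|\nr|$ subtrees, and let $r\to\infty$ to identify the genealogical measure $Z(\mathrm d\nu)$. Your two highlighted ingredients correspond to their Lemma~4.1 (uniform smallness of $\gamma_t-\gamma_r$ over $\nt(D)$) and Proposition~3.2 (properties of $Z(\mathrm d\nu)$), both of which the present paper also invokes later, in Subsection~\ref{ov}.

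One step, however, is misjustified as written. You explain the uniform bound on $\gamma_t(u)-\gamma_r(u)$ by claiming that Bramson's barrier ``confines those lines and thus lets one bound the number of their branch points in each unit time interval after $r$.'' The barrier constrains the \emph{position} of the ancestral line, not its \emph{branching times}: along any line of descent the branch times form a rate-one Poisson process that is independent of where the line sits, so confinement does not sparsify branching. The correct role of the barrier in the first-moment bound is different. After the many-to-one lemma, the expected number of particles $u\in\nt(D)$ with $\gamma_t(u)-\gamma_r(u)>\e^{-r/2}$ is $\e^t$ times the probability that a single spine simultaneously (i) ends near $m_t$ while staying under the Bramson envelope and (ii) accumulates a $\gamma$-increment exceeding $\e^{-r/2}$ after time~$r$. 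The barrier serves to tame the factor $\e^t$ through~(i); the smallness of~(ii) comes from the \emph{independence} of the spine's branching structure (branch times $(\tau_k)$ after $r$ and i.i.d.\ Bernoulli labels $(\epsilon_k)$, making the increment $\sum_k\epsilon_k\e^{-\tau_k}$) from the spine's trajectory, so that the two probabilities factor, and a concentration estimate handles~(ii). Replacing your parenthetical by this independence/factorisation argument repairs the step; without that, the deduction that the $\gamma$-increment is small ``because the barrier controls branch points'' is not valid.
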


It is then natural to compare the overlap with the one obtained at the limit for the REM, where the positions at time $t$ are given by $|\nt|$ independent Brownian motions and is studied in \cite{kurkova2003}. One notices the same effect of the decoration process as in \cite{PainZindy21} for the DGFF: the expected value of the overlap is strictly smaller than in the REM case. More precisely, let us define, for $\beta,\beta'>\beta_c$,
$$\qr\de\dfrac{\sum_i \e^{(\beta+\beta')\eta_i} }{\sum_i \e^{\beta\eta_i}\sum_i \e^{\beta'\eta_i}},$$
where the $(\eta_i)_i$ are the atoms of a PPP($\e^{-\sqrt 2x}\dx$).

\begin{thm}\label{thm2}
	 For any $\beta\neq\beta'>\beta_c$, we have $\es  [\q]<\es [ Q^{\mathrm{REM}}(\beta,\beta')].$
\end{thm}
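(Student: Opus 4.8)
The plan is to reduce $\q$ to a functional of a Poisson point process decorated by i.i.d.\ marks, to collapse the effect of the decoration into a single i.i.d.\ multiplicative factor, and then to compare with the REM through a one‑dimensional convexity inequality, the strictness coming from a genuine‑randomness property of the A\"id\'ekon--Berestycki--Brunet--Shi decoration. First I would use that $\q$ in \eqref{overlap1} is invariant under a common shift of the $p_i$ (numerator and denominator both scale by $\e^{(\beta+\beta')c}$ under $p_i\mapsto p_i+c$) and under a common rescaling of the $\e^{\beta p_i}$ or of the $\e^{\beta' p_i}$; hence, conditioning on $Z$ and using that $(p_i)$ is then a $\mathrm{PPP}(CZ\e^{-\sqrt2 x}\dx)$, i.e.\ a shift of a $\mathrm{PPP}(\e^{-\sqrt2 x}\dx)$, the value of $\es[\q]$ is unchanged if $(p_i)$ is a plain $\mathrm{PPP}(\e^{-\sqrt2 x}\dx)$ decorated by i.i.d.\ copies $\Delta^i$ of $\Delta$. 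Writing $W_\beta(\Delta)\de\sum_j\e^{\beta\Delta_j}$, $z_i\de\e^{\beta p_i}W_\beta(\Delta^i)$, $r\de\beta'/\beta$ and $s\de\beta_c/\beta\in(0,1)$, one has $\e^{\beta'p_i}W_{\beta'}(\Delta^i)=z_i^{\,r}R_i$ with $R_i\de W_{\beta'}(\Delta^i)W_\beta(\Delta^i)^{-r}$, and the marking/displacement theorems show that $(z_i)$ is a $\mathrm{PPP}(\mathrm{const}\cdot z^{-1-s}\,\mathrm dz)$ while $(R_i)$ are i.i.d., independent of $(z_i)$, distributed as $R(\Delta)\de W_{\beta'}(\Delta)W_\beta(\Delta)^{-r}$ under the $W_\beta(\Delta)^s$‑size‑biasing of the law of $\Delta$. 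Consequently
\[
\q\;\sd\;\frac{\sum_i z_i^{\,1+r}R_i}{\big(\sum_j z_j\big)\big(\sum_k z_k^{\,r}R_k\big)},\qquad \qr\;\sd\;\frac{\sum_i z_i^{\,1+r}}{\big(\sum_j z_j\big)\big(\sum_k z_k^{\,r}\big)},
\]
the REM being exactly the degenerate case $R\equiv\mathrm{const}$. Note that $R(\Delta)$ only depends on the shape of the decoration (it is invariant under $\Delta\mapsto\Delta+c$) and that $R(\Delta)=1$ precisely when $\sum_j\e^{\beta'\Delta_j}=\big(\sum_j\e^{\beta\Delta_j}\big)^{\beta'/\beta}$.

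Next I would compute both expectations via the identity $\es[\q]=1-\p(\text{the two sampled atoms lie in different clusters})$: a double use of the Mecke equation for the pair Poisson process, together with $1/(XY)=\int_0^\infty\!\!\int_0^\infty\e^{-aX-bY}\,\mathrm da\,\mathrm db$, gives $\es[\q]=1-\int_0^\infty\!\!\int_0^\infty\partial_a\Psi\,\partial_b\Psi\,\e^{-\Psi}\,\mathrm da\,\mathrm db$, where $\Psi(a,b)=\int\!\!\int(1-\e^{-a\sigma-b\tau})\,\mathcal L(\mathrm d\sigma\,\mathrm d\tau)$ and $\mathcal L$ is the Lévy measure of $(z_i,z_i^r R_i)_i$. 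For the REM, $\Psi(a,b)=\int_0^\infty(1-\e^{-av-bv^r})v^{-1-s}\,\mathrm dv=a^s g(b/a^r)$ with $g$ a Bernstein function; performing the $a$‑integral and the change of variables $b\mapsto b/a^r$ collapses the double integral to the one‑dimensional quantity $\mathcal J(g)\de\tfrac1s\int_0^\infty\big(sg-ryg'\big)g'\,g^{-2}\,\mathrm dy$, so that $\es[\qr]=1-\mathcal J(g)$. The identical computation for the BBM yields $\es[\q]=1-\mathcal J(\widetilde g)$ with $\widetilde g(y)=\es_R[g(yR)]$, a mixing of dilates of $g$; in particular for $r=1$, i.e.\ $\beta=\beta'$, $\mathcal J$ does not depend on the decoration at all, in agreement with the one‑temperature result. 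The theorem is thus reduced to the strict inequality $\mathcal J(\widetilde g)>\mathcal J(g)$ whenever $R$ is not a.s.\ constant.

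Finally I would establish this inequality by passing to the variable $m\de(\log g\circ\exp)'$, a profile $\re\to(0,s/r)$ with limits $0$ at $-\infty$ and $s/r$ at $+\infty$, for which $\mathcal J(g)=\int_\re m\big(1-\tfrac rs m\big)\,\mathrm dt$, a functional depending only on $m$ and invariant under translations of $m$ (translations corresponding exactly to deterministic dilations of $g$). The analogous profile $\widetilde m$ of $\widetilde g$ is, at each $t$, a weighted average of the shifts $m(t+\log R)$ with weights proportional to $g(\e^tR)$, so strict concavity of $x\mapsto x(1-\tfrac rs x)$ on $(0,s/r)$, applied pointwise in $t$ and combined with the translation‑invariance of $\int_\re m(1-\tfrac rs m)$, should yield $\mathcal J(\widetilde g)\ge\mathcal J(g)$, with strict inequality once $R$ is non‑degenerate. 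The hard part is precisely this last step: the averaging weights $g(\e^tR)$ depend on $t$, so the exchange of the $t$‑integral with the $R$‑average needed to identify the Jensen lower bound with $\mathcal J(g)$ is not automatic and has to be pushed through by an approximation or monotonicity argument on $g$, or by recasting the identity so that a genuinely unweighted Jensen inequality applies; getting both the direction and the strictness right is delicate, which is why the statement is restricted to $\beta\neq\beta'$. The remaining ingredient is the non‑degeneracy of $R$: since $R(\Delta)=1$ forces the rigid functional identity $\sum_j\e^{\beta'\Delta_j}=\big(\sum_j\e^{\beta\Delta_j}\big)^{\beta'/\beta}$, which fails for any decoration charging two distinct points (recall $\beta\neq\beta'$), it suffices to know that $\Delta$ is genuinely random and carries at least two atoms with positive probability — a qualitative feature one extracts from the A\"id\'ekon--Berestycki--Brunet--Shi description, whose use for this purpose is, to our knowledge, new, and which is the final point requiring care.
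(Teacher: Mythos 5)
Your proposal shares the paper's overall strategy — reparametrize so that the decoration contributes an i.i.d.\ multiplicative factor on top of a plain $\mathrm{PPP}(\e^{-\sqrt2 x}\dx)$, show that the expected overlap with that factor is strictly smaller than the undecorated (REM) one unless the factor is degenerate, and then rule out degeneracy using the A\"id\'ekon--Berestycki--Brunet--Shi description — but both of the non-trivial steps are left genuinely open, and where you sketch them your route is substantially harder than necessary.

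For the comparison step, the paper does not go through Mecke/L\'evy measure computations and a one-dimensional functional $\mathcal J(g)$. It uses the Panchenko--Talagrand Lemma~2.1 to write the pair $(\xi_i+X_{\beta,i},\xi_i+X_{\beta',i})_i$ as $(\xi_i+c_\beta,\xi_i+c_\beta+Y_i)_i$ with $(Y_i)$ i.i.d.\ and \emph{independent of} $(\xi_i)$, and then applies the elementary deterministic-sequence inequality of Lemma~\ref{lemPZ} (the Pain--Zindy lemma), conditionally on $(\xi_k)$, with $p_n\propto\e^{\beta'\xi_n}$, $q_n\propto\e^{\beta\xi_n}$ and $A_n=\e^{\beta' Y_n}$. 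That is a clean conditional Jensen-type argument with no weighted averaging of shifted profiles to untangle; the ``getting both the direction and the strictness right is delicate'' issue you flag simply doesn't arise. You explicitly concede that your profile/Jensen step is not completed, so as written your proof of the inequality is incomplete.

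For the non-degeneracy step, your claim that it ``suffices to know that $\Delta$ is genuinely random and carries at least two atoms with positive probability'' is not justified and, as stated, is a gap. What one needs to exclude is $R(\Delta)=c$ a.s.\ for \emph{some} constant $c$, i.e.\ the identity $\big(\sum_{j\ge0}\e^{\beta\Delta_j}\big)^{1/\beta}=c\big(\sum_{j\ge0}\e^{\beta'\Delta_j}\big)^{1/\beta'}$ a.s., and having two atoms with positive probability does not immediately break this — one needs quantitative control on the law of $\Delta$. The paper supplies exactly this in Proposition~\ref{sup}: $0$ lies in the support of $\sum_{j\ge1}\e^{\beta\Delta_j}$, which (via the elementary Lemma after~\eqref{lpeq}) forces $c=1$ and then contradicts non-triviality of $\Delta$. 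Establishing Proposition~\ref{sup} — via the backward-path description of the decoration, the Bessel asymptotics of $Y$, and the coupling between conditioned and unconditioned BBM — is the paper's main technical contribution and the genuinely new use of the ABBS description advertised in the abstract. Your proposal identifies where the ABBS description should enter but does not carry out this step, so the heart of the theorem remains unproved.
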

\begin{rem}
	When $\beta=\beta'$, one has $Q(\beta,\beta)\overset{(\mathrm d)}{=}Q^{\mathrm{REM}}(\beta,\beta)$ and its expected value is $1-\frac{\beta_c}{\beta}$, see \cite{kurkova2003}.
\end{rem}

What we need to prove this result is the precise description of the decoration point process obtained in \cite{abbs2013} and some technical estimates. Let us emphasize here that the main difficulty is to handle the description of this point process, which is less explicit than for the DGFF.

%%%%%%%%%%%%%%%%%%%
\subsection{Organization of the paper}
%%%%%%%%%%%%%%%%%%%

In Section \ref{ovcv}, we prove successively Part (i) and Part (ii) of Theorem \ref{thm1}. Then, in Section \ref{ovsmaller}, we give a proof of Theorem \ref{thm2}. Appendix \ref{a} contains some technical results that are used in Section \ref{ovcv} and Appendix \ref{Bessel}, \ref{tech} contains technical results that are used in Section \ref{ovsmaller}.

%%%%%%%%%%%%%%%%%%%%%%%%%%%%%%%%%%%%%%
\section{Proof of Theorem \ref{thm1}: convergence of the overlap distribution}
\label{ovcv}
%%%%%%%%%%%%%%%%%%%%%%%%%%%%%%%%%%%%%%

%%%%%%%%%%%%%%%%%%%
\subsection{Proof of Part (i) of Theorem \ref{thm1}}\label{part1}
%%%%%%%%%%%%%%%%%%%
The averaged free energy $f_t$ defined in (\ref{zfg}) is a convex function of $\beta$ and its limit $f$ is differentiable everywhere. By an argument of convexity known as Griffiths' lemma, see for example  \cite[page 25]{talagrand2011}, the derivative $f'$ is the pointwise limit of $f_t'$
\[
f'(\beta)=\lim\limits_{t \to \infty}f'_t(\beta)=\lim\limits_{t\to\infty}\frac{1}{t}\es\bigg[\summ x_u(t)\frac{\e^{\beta x_u(t)}}{\sumw \e^{\beta x_w(t)}}\bigg].
\]
One would like to apply a Gaussian integration by parts to the last term to make appear the overlap (see Lemma \ref{IPP}). In order to deal with a fixed number of Gaussian variables, we use a conditioning on the underlying tree $T$. Indeed, conditionally on $T$, $(x_u(t),u\in\nt)$ is a Gaussian vector with covariances $(u\wedge v)_{u,v\in\nt}$ and denoting $\es[\cdot\,|\,T]$ by $\es_T$ yields
\begin{align*}
\es_T\bigg[\summ x_u(t)\frac{\e^{\beta x_u(t)}}{\sumw \e^{\beta x_w(t)}}\bigg]
&=
\sum_{u,v\in\nt} u\wedge v\,\es_T\bigg[\frac{-\beta\e^{\beta( x_u(t)+x_v(t))}}{(\sumw \e^{\beta x_w(t)})^2}\bigg]
+\summ t\,\es_T\bigg[\frac{\beta\e^{\beta x_u(t)}}{\sumw \e^{\beta x_w(t)}}\bigg]\\
&=\beta t\Big(1-\es_T\big[\GG(q_t(u,v))\big]\Big).
\end{align*}
Taking expectation of both sides and using $f'(\beta)=\beta$ for $\beta\leq\beta_c$, see Equation (\ref{free}), yields $\es[\GG(q_t(u,v))]\longrightarrow 0$, as $t\rightarrow\infty$. Now assume that $\beta,\beta'>0$ with $\beta\leq\beta_c$ without loss of generality and take $a\in(0,1)$, we have
\begin{align*}
\gbg(q_t(u,v)\geq a)&=\sum_{w\in\mathcal N_{at}}\G(u\in\nt : u\geq w)\Gp(v\in\nt : v\geq w)\\
&\leq\max_{w\in\mathcal N_{at}}\G(u\in\nt : u\geq w)\sum_{w\in\mathcal N_{at}}\Gp(v\in\nt : v\geq w)\\
&=\max_{w\in\mathcal N_{at}}\G(u\in\nt : u\geq w).
\end{align*}
This last term converges to $0$ in $L^2$ since
\[
\max_{w\in\mathcal N_{at}}\G(u\in\nt : u\geq w)^2= \max_{w\in\mathcal N_{at}}\GG(u,v\in\nt : u,v\geq w)\leq \GG(q_t(u,v)\geq a),
\]
concluding the proof.
%%%%%%%%%%%%%%%%%%%
\subsection{Convergence of $(\rho_{\beta,t})_t$}\label{onet}
\label{rhoc}
%%%%%%%%%%%%%%%%%%%

In this subsection, we study the convergence of the following random measures on $\re_+$ defined by
\[
\rho_{\beta,t}\de\summ \e^{\beta\tilde{x}_u(t)}\delta_{\gtu}, \qquad \text{for }  \beta>\beta_c.
\]
We prove that $\rho_{\beta,t}\cvdnot\rho_{\beta}$, when $t\rightarrow\infty$, where $\rho_{\beta}$ is the corresponding measure in the limiting process, namely
\[
\rho_{\beta}\de\sum_{i,j} \e^{\beta(p_i+\Delta^i_j)}\delta_{q_i}.
\]
\begin{rem}\label{alternative_exp}
	Observing that $Z(\mathrm d \nu) \otimes Ce^{-\sqrt 2x} \dx=Z(\mathrm d \nu)/Z \otimes e^{-\sqrt 2(x-\frac{1}{\sqrt2} \log(CZ))} \dx$, it is easy to see that the point process whose atoms are $\xi_k = p_k-\frac{1}{\sqrt 2}\log(CZ)$ is a PPP($\e^{-\sqrt2 x}\dx$) independent of $Z$, and that the $(q_i)$ are i.i.d. with distribution $Z(\cdot)/Z(\mathbb R_+)$ and independent from $(\xi_k)$. It yields to the following expression for $\rho_{\beta}$:
	\[
	\rho_{\beta} = (CZ(\re_+))^{\frac{\beta}{\beta_c}} \sum_{k} \e^{\beta\xi_k} (\sum_j \e^{\beta \Delta^k_j})\delta_{q_k},
	\]
	This form will be useful in Subsection \ref{func}.
\end{rem}

A natural idea to prove the convergence is to use the relation $\rho_{\beta,t}(f)=\widetilde{\mathcal E}_t (\tilde f)$, where $f\in C_c^+(\mathbb R_+)$ and $\tilde f(x,h)\de\e^{\beta h}f(x)$ together with the convergence of $(\widetilde{\mathcal E}_t)$, see Proposition \ref{ext}. The problem is that $\tilde f$ is no more compactly supported and we thus need to control the high and low values of $h$.\\
For this purpose, let us denote $ \nt(D)\de\{u\in \nt : \tilde{x}_u(t)\in D \}$,  for $D\subset \mathbb R$, and define

\begin{align}\label{rhod}
\rho^D_{\beta,t}\de\sumd  \e^{\beta\tilde{x}_u(t)}\delta_{\gtu},
\qquad
\rho_{\beta}^D\de\sum_{i,j}\ind_{D}(p_i+\Delta^i_j) \e^{\beta(p_i+\Delta^i_j)}\delta_{q_i}.
\end{align}
When $D=[-A,A]$, it is easy to see that $\rho^D_{\beta,t}\cvdnot\rho_\beta^D$ when $t\rightarrow\infty$. Indeed, for $\varepsilon>0$, choose continuous functions $\phi_\varepsilon$ and $\psi_\varepsilon$ such that $\ind_{[-A+\varepsilon,A-\varepsilon]}\leq\phi_\varepsilon\leq \ind_{[-A,A]}\leq\psi_\varepsilon\leq\ind_{[-A-\varepsilon,A+\varepsilon]}$. Let $f\in C_c^+(\re_+)$ and define $g_\varepsilon(x,h)\de f(x)\e^{\beta h}\phi_\varepsilon(h)$ and $h_\varepsilon(x,h)\de f(x)\e^{\beta h}\psi_\varepsilon(h)$, then we have
\[
\es[\exp(-\widetilde{\mathcal E}_t(h_\varepsilon))]\leq\es[\exp(-\rho^D_{\beta,t}(f))]\leq \es[\exp(-\widetilde{\mathcal E}_t(g_\varepsilon))].
\]
Since $g_\varepsilon$ and $h_\varepsilon$ are compactly supported, Proposition \ref{ext} yields
\[
\es[\exp(-\widetilde{\mathcal E}(h_\varepsilon))]\leq\liminf\limits_{t \to \infty}\es[\exp(-\rho^D_{\beta,t}(f))]\leq\limsup\limits_{t \to \infty}\es[\exp(-\rho^D_{\beta,t}(f))]\leq \es[\exp(-\widetilde{\mathcal E}(g_\varepsilon))],
\]
and the dominated convergence theorem, when $\varepsilon\rightarrow0$, gives
\[\lim\limits_{t \to \infty}\es[\exp(-\rho^D_{\beta,t}(f))]=\es[\exp(-\rho^D_{\beta}(f))],\]
and the convergence of the Laplace functionals concludes.\\

The end of this subsection shows that this convergence still holds for $D=\mathbb R$. It is again sufficient to prove the convergence of the Laplace functionals
\[
\es\big[\e^{-\rho_{\beta,t}(f)}\big]\tds\es\big[\e^{-\rho_{\beta}(f)}\big], \qquad \forall f\in C_c^+(\mathbb R_+).
\]
Let $D=[ -A,A ]$ for $A\geq0$. Since $\rho^D_{\beta,t}(f) \leq \rho_{\beta,t}(f)$, we have
$\limsup\limits_{t \to \infty}\es[\e^{-\rho_{\beta,t}(f)}]\leq \es[\e^{-\rho^D_{\beta}(f)}]$ and the dominated convergence theorem, when $A\rightarrow\infty$, yields $\limsup\limits_{t \to \infty}\es[\e^{-\rho_{\beta,t}(f)}]\leq \es[\e^{-\rho_{\beta}(f)}].$\\
The proof of $\underset{t\rightarrow \infty}{\mathrm{liminf~}}\es[\e^{-\rho_{\beta,t}(f)}]\geq \es[\e^{-\rho_{\beta}(f)}]$ needs more work. We need to estimate the  density of particles outside $D=[-A,A]$. The top values can be controlled by the following inequality, which can be found in the seminal work by Bramson \cite[Proposition 3]{bramson78}, where $\txt\de\max_{u\in\nt} \tilde{x}_u(t)$.

\begin{prop}[Bramson \cite{bramson78}]\label{Bram} There exists $c>0$ such that
\[
\mathbb P(\txt>A) \leq c(A+1)^2 \e^{-\sqrt 2 A}, \qquad \mathrm{for~} A\geq0, t\geq A^2 \mathrm{~and~} t\geq 2.
\]
\end{prop}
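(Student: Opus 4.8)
The final statement above is Proposition~\ref{Bram}, which is \emph{attributed to} Bramson~\cite{bramson78}; the intended ``proof'' in the paper is simply a citation (possibly with a sentence indicating that the displayed inequality is a restatement of \cite[Proposition~3]{bramson78}, after translating $m_t$ and centering conventions). For completeness, I nonetheless sketch how one would actually prove the bound $\p(\txt > A) \le c(A+1)^2 \e^{-\sqrt 2 A}$ for $A\ge 0$, $t\ge A^2$, $t\ge 2$ from first principles, following Bramson's method.

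\textbf{Setup.} The plan is to work with the F-KPP connection. Writing $u(t,x) \de \p(x(t) \le x)$ where $x(t) = \max_{v\in\nt}x_v(t)$, McKean's theorem gives that $w \de 1-u$ solves $\partial_t w = \tfrac12 \partial_{xx}w + w^2 - w$ with Heaviside-type initial data $w(0,\cdot) = \ind_{(0,\infty)}$. Then $\p(\txt > A) = \p(x(t) > m_t + A) = w(t, m_t + A)$, so the task is to bound $w(t, m_t + A)$. The first step is to reduce to a heat-kernel estimate via Bramson's comparison: one shows $w(t,y) \le C\,\es\big[ \ind_{\{B_t + y \ge 0\}}\,\phi(B_t + y)\big]$ for a suitable barrier-corrected comparison, or more directly one uses the maximum principle to dominate $w$ by the solution $\bar w$ of the linearized equation $\partial_t \bar w = \tfrac12\partial_{xx}\bar w - \bar w + \bar w \cdot \ind_{\{\bar w \text{ small}\}}$, ultimately controlling $w$ by a Gaussian (heat-kernel) quantity after the logarithmic Bramson shift is accounted for.

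\textbf{Key steps, in order.} First, I would record the first-moment (many-to-one) bound: $\es[\#\{v\in\nt : x_v(t) > m_t + A\}] = \e^t\,\p(B_t > m_t + A)$, and estimate this Gaussian tail. With $m_t = \sqrt 2 t - \tfrac{3}{2\sqrt 2}\log t$ one gets, for $A \ge 0$ and $t \ge A^2$,
\[
\e^t\,\p(B_t > m_t + A) \;\le\; \frac{c}{\sqrt{t}}\cdot\frac{t^{3/2}}{t}\cdot\frac{1}{m_t + A - \sqrt 2 t + \text{l.o.t.}}\;\e^{-\sqrt 2 A}\cdot e^{-A^2/(2t)} \;\le\; c(A+1)\,\e^{-\sqrt 2 A},
\]
using the standard estimate $\p(B_t > b) \le \frac{\sqrt t}{b\sqrt{2\pi}}e^{-b^2/(2t)}$ and expanding $\tfrac{(m_t+A)^2}{2t} = t - \tfrac32\log t + \tfrac{\log^2 t}{\cdots} + \sqrt 2 A + \tfrac{A^2}{2t} + \cdots$. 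Markov's inequality then gives $\p(x(t) > m_t + A)$ up to the $(A+1)$ factor — but this is too weak by one power of $(A+1)$ and, more importantly, the naive first moment over-counts: the relevant particles have ancestral paths that must stay below a curved barrier, which is exactly where the extra $(A+1)$ (giving $(A+1)^2$) comes from upon a refined argument, and equivalently where Bramson's careful PDE analysis enters. Second, to get the correct $(A+1)^2$, I would invoke the barrier estimate: restrict to particles whose trajectory stays below the line $t \mapsto \tfrac{s}{t}(m_t + A)$ plus a concave correction on $[0,t]$; the probability that a Brownian bridge stays below such a barrier contributes an additional factor $\asymp (A+1)/\sqrt t \cdot (\text{something})$, and combining with the first moment yields $c(A+1)^2\e^{-\sqrt 2 A}$. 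Third, I would check the hypotheses $t\ge A^2$ and $t \ge 2$ are exactly what is needed for the error terms in the expansion of $m_t^2/(2t)$ (the $\log^2 t / t$ term) and the barrier-probability estimate to be absorbed into the constant $c$; the constraint $t \ge A^2$ ensures $e^{-A^2/(2t)} \ge e^{-1/2}$ is harmless and that $A = O(\sqrt t)$ so that $A$ does not compete with the leading $\sqrt 2 t$.

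\textbf{Main obstacle.} The genuinely hard part is obtaining the \emph{correct polynomial prefactor} $(A+1)^2$ rather than the easy $(A+1)$ from a bare first-moment/Markov bound. This is precisely the content of Bramson's tightness analysis: one must either (a) carry out the PDE/maximum-principle argument controlling $w(t, m_t+A)$ with the Bramson logarithmic correction baked in, tracking how the Heaviside initial data relaxes, or (b) run a truncated second-moment or barrier argument on the branching side, showing that conditioning ancestral paths to lie below the appropriate concave barrier is what produces the extra $(A+1)$. Since the statement is explicitly quoted from \cite{bramson78}, the honest ``proof'' in the paper is the citation; I would only reproduce the first-moment computation above as motivation and refer to \cite[Proposition~3]{bramson78} (and, e.g., the streamlined account in \cite{arguin_2016} or \cite{abk2013}) for the refinement yielding the square.
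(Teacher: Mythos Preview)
You are right that the paper gives no proof: the proposition is simply quoted from \cite[Proposition~3]{bramson78}, so as far as matching the paper goes, your opening remark is the entire content.

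Your supplementary sketch, however, contains a genuine error. You claim that the naive many-to-one bound yields
\[
\e^{t}\,\p(B_t > m_t + A)\ \le\ c(A+1)\,\e^{-\sqrt 2 A},
\]
and that Markov's inequality then gives the tail ``up to one power of $(A+1)$''. This is wrong in two ways. First, Markov gives an \emph{upper} bound, so if the first moment were really $c(A+1)\e^{-\sqrt 2 A}$ you would already have a \emph{stronger} estimate than the target $c(A+1)^2\e^{-\sqrt 2 A}$, not a weaker one. Second, and more importantly, the first moment is not bounded in $t$: expanding
\[
\frac{(m_t+A)^2}{2t}
= t + \sqrt 2 A - \tfrac{3}{2}\log t + O\!\Big(\frac{(A+\log t)^2}{t}\Big),
\]
one gets $\e^{t}\,\p(B_t > m_t + A)\asymp t\,\e^{-\sqrt 2 A}$, which diverges with $t$ and is useless as an upper bound on the tail. (Your displayed inequality also contains the factor $1/(m_t+A-\sqrt 2 t)$, which is $1/(A-\tfrac{3}{2\sqrt 2}\log t)$ and becomes negative for large $t$.)

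This is exactly why Bramson's argument is nontrivial: the gap between the first moment and the true tail is a full factor of $t$, not merely a polynomial in $A$. Closing it requires the truncated first moment --- restricting to particles whose ancestral path stays below a suitable curve --- so that the ballot-type probability for the Brownian bridge supplies a compensating $1/t$. The factor $(A+1)^2$ then comes from the two endpoints of that barrier (one $(A+1)$ from the terminal displacement, one from the tilt of the barrier near time $t$), not from refining an $(A+1)$ obtained at first-moment level. Your description of the ``main obstacle'' is therefore qualitatively right but quantitatively misdiagnosed: the barrier is not a refinement to improve a prefactor, it is what makes the bound finite in $t$ at all.
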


\noindent
To address the low values, we use the following proposition whose proof is postponed to the Appendix, see \ref{a}.
\begin{prop}\label{pro1} Let $\eta>0$, then $\underset{A\rightarrow \infty}{\mathrm{lim}} \underset{t\rightarrow \infty}{\mathrm{limsup~}} \p \big(\rho_{\beta,t}^{]-\infty,-A]}(\mathbb R_+)>\eta \big)=0.$
\end{prop}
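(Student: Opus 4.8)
The plan is to control the total mass $\rho_{\beta,t}^{]-\infty,-A]}(\mathbb R_+) = \sum_{u\in\nt,\,\tilde x_u(t)\le -A} \e^{\beta\tilde x_u(t)}$ by first restricting attention to particles that are not too low, say with $\tilde x_u(t)\ge -L(t)$ for some slowly growing barrier $L(t)$ (for instance $L(t)=t^{1/4}$ or a multiple of $\sqrt t$), and then dealing separately with the negligible contribution of the extremely low particles. For the bulk part, I would slice the interval $[-L(t),-A]$ into unit windows $[-k-1,-k]$ for $k=A,\dots,\lfloor L(t)\rfloor$, so that
\[
\rho_{\beta,t}^{[-L(t),-A]}(\mathbb R_+) \le \sum_{k=A}^{\lfloor L(t)\rfloor} \e^{-\beta k}\,\#\{u\in\nt : \tilde x_u(t)\in[-k-1,-k]\},
\]
and then estimate $\es[\#\{u\in\nt:\tilde x_u(t)\in[-k-1,-k]\}]$ by a first-moment (many-to-one) computation. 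The many-to-one lemma gives $\es[\#\{u\in\nt:\tilde x_u(t)\in[-k-1,-k]\}]=\e^t\,\p(B_t-m_t\in[-k-1,-k])$ where $B$ is a standard Brownian motion; using $m_t=\sqrt2 t-\frac{3}{2\sqrt2}\log t$ and a Gaussian estimate one finds this expectation is bounded by $C\,t\,\e^{\sqrt2 k}$ uniformly in the relevant range of $k$ (the polynomial prefactor $t$ comes from the $\log t$ correction in $m_t$; it is harmless because the sum will be summable in $k$ after pairing with $\e^{-\beta k}$, and we only need $\limsup_t$ after $A\to\infty$). Summing, Markov's inequality yields
\[
\p\big(\rho_{\beta,t}^{[-L(t),-A]}(\mathbb R_+)>\eta/2\big) \le \frac{2}{\eta}\sum_{k=A}^{\infty} \e^{-\beta k}\,C\,t\,\e^{\sqrt 2 k},
\]
but the bare first moment is $O(t)$ and does not vanish — this is exactly where the argument has to be sharpened.

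The main obstacle, as the crude bound above shows, is that the first moment $\es[Z_{\beta,t}\e^{-\beta m_t}]$ does \emph{not} go to zero (it is in fact of polynomial order in $t$ in the supercritical regime, reflecting the known fact that $Z_{\beta,t}\e^{-\beta m_t}$ is tight but its mean blows up because of rare high particles). So one cannot simply use Markov on the first moment; instead the truncation has to be combined with the derivative-martingale / Lalley--Sellke structure. The cleaner route is to use the known tightness of the rescaled partition function: it is classical (and follows from the convergence $\mathcal E_t\cvd\sum\delta_{p_i+\Delta^i_j}$ together with standard tightness/uniform integrability inputs, see e.g.\ \cite{abk2013,bramson78}) that the family $\big(\summ\e^{\beta\tilde x_u(t)}\big)_t$ is tight for $\beta>\beta_c$, and more precisely that its low-lying part is uniformly negligible. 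Concretely, I would prove the one needed fact: for every $\eta>0$ there is $A$ so that $\limsup_t \p\big(\sum_{u:\tilde x_u(t)\le -A}\e^{\beta\tilde x_u(t)}>\eta\big)<\eta$.

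To do this I would fix a large constant and split at the Lalley--Sellke barrier: on the event that the derivative martingale $Z$ is bounded above and below by fixed constants (which has probability close to $1$), one uses that with high probability every particle satisfies $\tilde x_u(t)\le -C\log\log t$ away from the extreme ones is not what we want — rather, the right tool is a truncated second moment. Specifically, I would bound
\[
\es\Big[\sum_{u:\tilde x_u(t)\in[-k-1,-k]}\e^{\beta\tilde x_u(t)}\ \ind_{\{\tilde x^*(t)\le \varepsilon k\}}\Big]
\]
using a truncated many-to-one / Bramson-type ballot estimate: imposing that no ancestor of $u$ ever rose too high forces the path to stay below a line, which produces the extra factor $\e^{-\sqrt2 k}\cdot(\text{polynomial})$ that beats $\e^{-\beta k}$ only partially, so one actually gets a bound like $C k^{3}\e^{-(\beta-\sqrt2)k}$ per slice when $\beta<\sqrt 2$ fails — wait, here $\beta>\sqrt2$, so $\e^{-\beta k}\e^{\sqrt2 k}=\e^{-(\beta-\sqrt2)k}$ which \emph{is} summable. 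Thus the truncated first moment already gives $\sum_{k\ge A}Ck^{\alpha}\e^{-(\beta-\sqrt2)k}\to 0$ as $A\to\infty$, uniformly in $t$, once we have justified replacing the full count by the count on $\{\tilde x^*(t)\text{ not too large}\}$, which costs only $\p(\tilde x^*(t)>A)\le c(A+1)^2\e^{-\sqrt2 A}$ by Proposition \ref{Bram}. Combining: (a) Proposition \ref{Bram} to discard paths reaching above $-A$-level obstructions for the upper part and to control $\tilde x^*(t)$; (b) a truncated many-to-one estimate giving $\es[\rho_{\beta,t}^{[-L(t),-A]}(\mathbb R_+)\ind_{\{\tilde x^*(t)\le A\}}]\le\sum_{k\ge A}C(k+1)^{3}\e^{-(\beta-\sqrt2)k}$, which tends to $0$ as $A\to\infty$ uniformly in large $t$; (c) a crude deterministic/first-moment bound for the very low particles $\tilde x_u(t)<-L(t)$, where $\e^{\beta\tilde x_u(t)}<\e^{-\beta L(t)}$ and $\es|\nt|=\e^t$, so that part is $\le\e^{t-\beta L(t)}\to0$ provided $L(t)/t\to\infty$ — so I would instead take $L(t)=t$ (or any $L(t)$ with $\beta L(t)-t\to+\infty$, e.g.\ $L(t)=2t/\beta$). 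Then Markov's inequality on (b) plus (a) and (c) gives $\limsup_{t\to\infty}\p(\rho_{\beta,t}^{]-\infty,-A]}(\mathbb R_+)>\eta)\le \eta/2 + o_A(1)$, and letting $A\to\infty$ finishes the proof. I expect step (b), the truncated ballot-type first moment estimate (getting the correct exponential decay $\e^{-\sqrt2 k}$ and the polynomial prefactor from the barrier localization à la Bramson), to be the technical heart of the argument.
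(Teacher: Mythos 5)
Your plan is essentially the paper's proof. Both arguments split off the event $\{\tilde x(t)>A\}$ using Bramson's tail estimate (Proposition~\ref{Bram}), and the remaining work in both cases hinges on a truncated first-moment estimate for level-set counts of the form $\es[N_t^{-k};\,\tilde x(t)\le A]\lesssim (k+1)^{2}\e^{\sqrt2 k}$, so that the $\e^{-\beta k}$ weights with $\beta>\sqrt2$ produce a geometric tail that vanishes as $A\to\infty$ uniformly in $t$. The only real difference is that you propose to rederive this estimate by hand via a Bramson-style ballot argument, whereas the paper simply cites it as Proposition~\ref{nat} from Cortines--Hartung--Louidor \cite[Lemma~4.2]{cortineshartunglouidor2019}, and the paper packages the Markov step as a union bound on the event $E_{A,t}=\{\exists n: N_t^{A+n}\ge\e^{(\sqrt2+\delta)(A+n)}\}$ followed by a deterministic geometric bound on $E_{A,t}^c$, rather than applying Markov directly to the truncated expectation of $\rho$; these are cosmetically different packagings of the same estimate. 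Your step (c) (cutting off particles below $-L(t)$ by a crude $\e^{t-\beta L(t)}$ bound) is unnecessary once the truncated first moment is summed over all slices $k\ge A$, and it is absent from the paper's proof.
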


\noindent
These two propositions enable to prove the lower bound.
\begin{prop}\label{liminf}
	Let $f\in C^+_c(\mathbb R_+)$, then
$\underset{t\rightarrow \infty}{\mathrm{liminf~}}\es[\e^{-\rho_{\beta,t}(f)}]\geq \es[\e^{-\rho_{\beta}(f)}]$.
\end{prop}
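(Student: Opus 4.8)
The plan is to establish the lower bound $\liminf_{t\to\infty}\es[\e^{-\rho_{\beta,t}(f)}]\geq \es[\e^{-\rho_\beta(f)}]$ by a truncation argument: split $\rho_{\beta,t}$ into its ``bulk'' part on a window $D=[-A,A]$, for which the convergence is already known, and two error terms (top and bottom), which Propositions \ref{Bram} and \ref{pro1} control uniformly in $t$ once $A$ is large. The key observation is that for any $f\in C_c^+(\mathbb R_+)$ one has $\e^{-\rho_{\beta,t}(f)}\geq \e^{-\rho_{\beta,t}^D(f)} - \ind\{\rho_{\beta,t}^{D^c}(f)>0\}$, or more quantitatively $\es[\e^{-\rho_{\beta,t}(f)}]\geq \es[\e^{-\rho_{\beta,t}^D(f)}] - \p(\rho_{\beta,t}^{D^c}(\mathbb R_+)>0)$ is too crude (the complement event need not be rare), so instead I would use the sharper bound $\e^{-x-y}\geq \e^{-x}(1-y)\geq \e^{-x}-y$ for $x,y\geq 0$, giving
\[
\es\big[\e^{-\rho_{\beta,t}(f)}\big]\geq \es\big[\e^{-\rho^D_{\beta,t}(f)}\big] - \es\big[\rho_{\beta,t}^{D^c}(f)\big],
\]
and then bound $\rho_{\beta,t}^{D^c}(f)\leq \|f\|_\infty\big(\rho_{\beta,t}^{]-\infty,-A]}(\mathbb R_+) + \rho_{\beta,t}^{[A,\infty[}(\mathbb R_+)\big)$.

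For the top term, on the event $\{\tilde x(t)\leq B\}$ one has $\rho_{\beta,t}^{[A,\infty[}(\mathbb R_+)\leq \sum_{u:\tilde x_u(t)\geq A}\e^{\beta \tilde x_u(t)}\leq \e^{\beta B}\,\mathcal E_t([A,B])$, and one controls the number of particles in $[A,B]$ together with the probability of $\{\tilde x(t)>B\}$ via Proposition \ref{Bram}. Actually the cleanest route is to avoid pointwise expectation bounds on $\rho_{\beta,t}^{[A,\infty[}(\mathbb R_+)$ (which may diverge because of rare high particles) and argue in probability: I would show $\rho_{\beta,t}^{[A,\infty[}(\mathbb R_+)\to 0$ in probability as $A\to\infty$ uniformly in $t$, using $\p(\rho_{\beta,t}^{[A,\infty[}(\mathbb R_+)>\eta)\leq \p(\tilde x(t)>A)$ when $A$ is large enough that $\e^{\beta A}>\eta$ is impossible to exceed with few particles — more carefully, on $\{\tilde x(t)\leq A+k\}$ the contribution from the shells $[A+\ell,A+\ell+1]$ is summable using the standard estimate that $\mathcal E_t([A+\ell, A+\ell+1])$ has expectation $\lesssim (A+\ell)\e^{-\sqrt2(A+\ell)}$, so the expected contribution to $\rho_{\beta,t}^{[A,\infty[}(\mathbb R_+)$ is $\lesssim \sum_{\ell\geq 0}\e^{\beta(A+\ell+1)}(A+\ell)\e^{-\sqrt2(A+\ell)}$, which does \emph{not} converge for $\beta>\beta_c=\sqrt2$. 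Hence the expectation really is infinite and one must work in probability: combine Proposition \ref{Bram} with the first-moment bound restricted to $\{\tilde x(t)\le A+\log A\}$ or similar, so that only finitely many shells contribute and the geometric-type sum is dominated by its first term. The bottom term is exactly Proposition \ref{pro1}, giving $\rho_{\beta,t}^{]-\infty,-A]}(\mathbb R_+)\to 0$ in probability.

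Putting the pieces together, for fixed $A$ one has by the already-established bulk convergence $\lim_{t\to\infty}\es[\e^{-\rho^D_{\beta,t}(f)}] = \es[\e^{-\rho^D_\beta(f)}]$; and since $\rho_{\beta,t}^{D^c}(f)$ is small in probability (uniformly in $t$ for $A$ large) and $f$ is bounded, a standard $\varepsilon/2$ splitting $\es[\e^{-\rho_{\beta,t}(f)}]\geq \es[\e^{-\rho^D_{\beta,t}(f)}] - \es[\min(\rho_{\beta,t}^{D^c}(f),1)] - \p(\rho_{\beta,t}^{D^c}(f)>1)$ gives $\liminf_{t\to\infty}\es[\e^{-\rho_{\beta,t}(f)}]\geq \es[\e^{-\rho^D_\beta(f)}] - o_A(1)$. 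Letting $A\to\infty$ and using monotone convergence $\rho^D_\beta(f)\uparrow \rho_\beta(f)$ (so $\es[\e^{-\rho^D_\beta(f)}]\to\es[\e^{-\rho_\beta(f)}]$) finishes the proof. The main obstacle is the top-values estimate: because $\beta>\sqrt2$, the Gibbs weights $\e^{\beta\tilde x_u(t)}$ amplify the contribution of high particles so strongly that naive first-moment arguments give infinite expectations, and one must carefully intertwine Bramson's tail bound for $\tilde x(t)$ with a truncated first-moment computation to get a bound that is genuinely small and uniform in $t$.
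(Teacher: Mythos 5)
Your overall strategy is the same as the paper's: split $\rho_{\beta,t}$ at a window $D=[-A,A]$, use the already-established bulk convergence for $\rho^D_{\beta,t}$, control the error $\rho^{D^c}_{\beta,t}$ in probability rather than in expectation (you correctly note the expectation of the top part diverges for $\beta>\beta_c$), and close with an inequality like $\e^{-x-y}\geq \e^{-x}-\min(y,1)$; the paper uses the equivalent $\e^{-x-y}\geq \e^{-\eta}\e^{-x}\ind_{\{y\leq\eta\}}$ and lets $\eta,\varepsilon\to0$. Up to that reformulation the two final estimates match, and the bottom term is handled identically via Proposition~\ref{pro1}.

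Where your write-up goes astray is the treatment of the top values. The relevant observation is entirely elementary and involves no moment computation: for every $\eta\geq0$ and every $A$, the event $\{\rho^{[A,\infty[}_{\beta,t}(\mathbb R_+)>\eta\}$ is contained in $\{\tilde x(t)>A\}$, because if no particle lies above level $A$ then the measure $\rho^{[A,\infty[}_{\beta,t}$ is identically zero. Hence
\[
\p\big(\rho^{[-A,A]^c}_{\beta,t}(\mathbb R_+)>\eta\big)\leq \p\big(\rho^{]-\infty,-A]}_{\beta,t}(\mathbb R_+)>\eta\big)+\p\big(\tilde x(t)>A\big),
\]
and the second term is killed by Bramson's Proposition~\ref{Bram}. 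You do write this containment down, but then qualify it as holding only ``when $A$ is large enough that $\e^{\beta A}>\eta$ is impossible to exceed with few particles,'' which is false and betrays a misunderstanding of why the containment holds: it is unconditional. You then second-guess yourself and launch into a shell decomposition ($\mathcal E_t([A+\ell,A+\ell+1])$) whose expected contribution you yourself observe diverges, and propose to ``intertwine'' Bramson with a truncated first moment — all of which is unnecessary. Commit to the containment, drop the shell computation, and your argument is both correct and essentially the paper's proof.
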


\begin{proof}
Fix $\eta>0$, we have
\begin{align*}
\p \big(\rho^{[-A,A]^c}_{\beta,t}(\mathbb R_+)>\eta \big)
&\leq \p \big(\rho^{]-\infty,-A]}_{\beta,t}(\mathbb R_+)>\eta\big)+ \p(\txt>A).
\end{align*}
Applying Proposition \ref{pro1} for the first term and Proposition \ref{Bram} for the second one yields
\begin{equation}\label{rho}
\underset{A\rightarrow \infty}{\mathrm{lim}} \underset{t\rightarrow \infty}{\mathrm{limsup~}} \p \big(\rho^{[-A,A]^c}_{\beta,t}(\mathbb R_+)>\eta \big)=0.
\end{equation}
\noindent
The fact that $f$ is bounded implies 
\[
\underset{A\rightarrow \infty}{\mathrm{lim}} \underset{t\rightarrow \infty}{\mathrm{limsup~}} \p \big(\rho^{[-A,A]^c}_{\beta,t}(f)>\eta \big)=0.
\]
Now fix $\varepsilon>0$ and let $A_0,t_0$ be large enough such that 
$\mathbb P (\rho^{[-A_0,A_0]^c}_{\beta,t}(f)>\eta) <\varepsilon$ for $t\geq t_0$.
Then, with $D=[-A_0,A_0]$, we obtain, for $t\geq t_0$,
\begin{align*}
\es\big[\e^{-\rho_{\beta,t}(f)}\big]& = \es\big[\e^{-\rho^D_{\beta,t}(f)}\e^{-\rho^{D^c}_{\beta,t}(f)}\big]
 \geq \e^{-\eta}\es\big[\e^{-\rho^D_{\beta,t}(f)}\ind_{\rho^{D^c}_{\beta,t}(f)\leq\eta}\big]
 \geq \e^{-\eta}\big(\es\big[\e^{-\rho^D_{\beta,t}(f)}\big]-\varepsilon\big),
\end{align*}
which implies
\begin{align*}
\underset{t\rightarrow \infty}{\mathrm{liminf~}}\es\big[\e^{-\rho_{\beta,t}(f)}\big]& \geq \e^{-\eta}\big(\es\big[\e^{-\rho^D_{\beta}(f)}\big]-\varepsilon\big) \geq \e^{-\eta}\big(\es\big[\e^{-\rho_{\beta}(f)}\big]-\varepsilon\big).
\end{align*}
The last inequality is true for every $\varepsilon,\eta>0$ and the conclusion follows.
\end{proof}
\noindent

In order to prove Part (ii) of Theorem \ref{thm1}, we will need a bit more than the convergence for the vague topology.
\begin{prop}\label{mass}
	When $t\rightarrow\infty$, $\rho_{\beta,t}(\mathbb R_+)\xrightarrow[]{\mathrm{(d)}} \rho_{\beta}(\mathbb R_+)$.
\end{prop}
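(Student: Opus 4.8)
The plan is to upgrade the vague convergence $\rho_{\beta,t}\cvdnot\rho_\beta$ established above to convergence of the total masses, the obstacle being that $\rho_{\beta,t}(\mathbb R_+)=\rho_{\beta,t}(\mathbf 1)$ tests against the constant function $\mathbf 1$, which is not compactly supported on $\mathbb R_+$. However, since the extremal process lives near $x=m_t$, the mass is carried by particles with $\tilde x_u(t)$ in a bounded window, so the non-compactness of the test function in the $\gamma$-variable should be harmless once we truncate in the $\tilde x$-variable, exactly as in the passage from $\rho^{[-A,A]}_{\beta,t}$ to $\rho_{\beta,t}$ in the previous subsection.

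Concretely, first I would fix $A>0$ and pick $f_K\in C_c^+(\mathbb R_+)$ with $\ind_{[0,K]}\le f_K\le \ind_{[0,K+1]}$. For $D=[-A,A]$, the measure $\rho_{\beta,t}^D$ has all its atoms $\gtu$ bounded by $\sum_{u\in\nt}\e^{-b_u}$; but more usefully, on the event $\{\tilde x_u(t)\in D\ \forall u\}$ combined with a bound on the number / spread of extremal particles, one can localize $\gtu$. Rather than chase that, the cleaner route is: by Proposition \ref{liminf}-style arguments (vague convergence plus the tightness estimates \eqref{rho}), for every bounded continuous $g\colon\mathbb R\to\mathbb R$ and every $\varepsilon>0$ there is $A$ with $\limsup_t \p(\rho^{[-A,A]^c}_{\beta,t}(\mathbb R_+)>\varepsilon)<\varepsilon$ and similarly $\es[\rho^{[-A,A]^c}_\beta(\mathbb R_+)]<\varepsilon$ (the latter because $\es[\rho_\beta(\mathbb R_+)]=\es[Z_\beta^\infty]<\infty$ for $\beta>\beta_c$, where $Z_\beta^\infty=\sum_{i,j}\e^{\beta(p_i+\Delta^i_j)}$, which has finite mean by the REM comparison and the integrability of the decoration law). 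Hence it suffices to show $\rho^{[-A,A]}_{\beta,t}(\mathbb R_+)\xrightarrow{(\mathrm d)}\rho^{[-A,A]}_\beta(\mathbb R_+)$ for each fixed $A$, and then let $A\to\infty$.

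For fixed $A$, I would show that $\rho^{[-A,A]}_{\beta,t}$ converges in distribution in the \emph{weak} topology on finite measures on $\mathbb R_+$, not just the vague one. The atoms of $\rho^{[-A,A]}_{\beta,t}$ that matter all come from particles with $\tilde x_u(t)\le A$, and the total mass is $\le \e^{\beta A}|\nt([-A,A])|$, which is tight by Proposition \ref{Bram} together with standard first-moment bounds on $|\nt(D)|$; moreover the $\gtu$-locations cluster into finitely many groups, each group confined to an interval of length $O(\e^{-(\text{recent split time})})$ which is $O(1)$ but, more to the point, the masses $\rho^{[-A,A]}_{\beta,t}([K,\infty))$ tend to $0$ uniformly in $t$ as $K\to\infty$ (again a first-moment estimate, since a particle alive at time $t$ with $\gamma_t(u)\ge K$ forces an early split pattern of exponentially small probability / the contribution is controlled by $\es[\rho_\beta^{[-A,A]}([K,\infty))]\to 0$). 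This uniform tightness at $+\infty$, combined with vague convergence, gives weak convergence of $\rho^{[-A,A]}_{\beta,t}$, hence convergence of total masses by the continuous mapping theorem applied to $\mu\mapsto\mu(\mathbb R_+)$.

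The main obstacle is the uniform control of $\rho^{[-A,A]}_{\beta,t}([K,\infty))$ as $K\to\infty$, uniformly in $t$: one must argue that extremal particles (those with $\tilde x_u(t)$ bounded) almost never have $\gamma_t(u)$ large, which amounts to saying they descend from a vertex $v$ with $b_v$ not too small — i.e. extremal particles split late in the tree, a genealogical fact consistent with the multiple-temperature picture. I expect this to follow by comparing $\es[\rho^{[-A,A]}_{\beta,t}([K,\infty))]$ with $\es[\rho^{[-A,A]}_{\beta}([K,\infty))]$ via Proposition \ref{ext} applied to a continuous truncation of $\tilde f(x,h)=\ind_{[-A,A]}(h)\,\e^{\beta h}\ind_{[K,\infty)}(x)$ (smoothed in both variables), plus the already-available uniform integrability bounds from \eqref{rho}; the limiting quantity $\es[\rho_\beta([K,\infty))]\to 0$ because $\es[\rho_\beta(\mathbb R_+)]<\infty$. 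Once tightness at infinity is secured, the rest is the routine upgrade from vague to weak convergence and the continuous mapping theorem.
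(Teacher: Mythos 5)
The key simplification you are missing is that the total mass $\rho^{[-A,A]}_{\beta,t}(\mathbb R_+)$ does not depend on the $\gamma$-coordinate at all: once you project onto $\mathbb R_+$ (the $\gamma$-axis) you are left with
\[
\rho^{[-A,A]}_{\beta,t}(\mathbb R_+)=\sum_{u\in\nt:\,\tilde x_u(t)\in[-A,A]}\e^{\beta\tilde x_u(t)}=\mathcal E_t\big(\e^{\beta\,\cdot}\,\ind_{[-A,A]}\big),
\]
a functional of the \emph{ordinary} extremal process $\mathcal E_t$ of Equation (\ref{cv}), with compact support in the $\tilde x$-variable. Its convergence in distribution therefore follows directly from (\ref{cv}) by the same smoothing-of-the-indicator device already used at the start of Subsection~\ref{rhoc}, and the passage from $[-A,A]$ to $\mathbb R$ is exactly the truncation estimate (\ref{rho}) applied to the Laplace transform, as in Proposition~\ref{liminf}. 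This is what the paper does. Your entire second and third paragraphs — the attempt to establish weak convergence of $\rho^{[-A,A]}_{\beta,t}$ as a measure on $\mathbb R_+$ and the tightness of its $\gamma$-marginal at $+\infty$ — address a problem that disappears once you notice the projection kills the $\gamma$-dependence.

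Moreover, the specific tool you reach for to patch that hole is broken: you assert $\es[\rho_\beta(\mathbb R_+)]=\es\big[\sum_{i,j}\e^{\beta(p_i+\Delta^i_j)}\big]<\infty$ for $\beta>\beta_c$, but this expectation is infinite. Writing $p_i=\xi_i+\frac{1}{\sqrt2}\log(CZ)$ as in Remark~\ref{alternative_exp}, the series $\sum_i\e^{\beta\xi_i}$ over a $\mathrm{PPP}(\e^{-\sqrt2 x}\dx)$ is a totally skewed $\alpha$-stable variable with index $\alpha=\beta_c/\beta<1$, hence has no finite first moment; decorations only increase the mass. So any argument relying on first-moment control of $\rho_\beta$ or of $\rho^{[-A,A]}_{\beta,t}([K,\infty))$ for $\beta>\beta_c$ cannot work as stated. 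Even if one could replace the first-moment bound by a tail bound on $\gamma_t(u)$ for extremal particles (which is true, via Lemma 4.1 of \cite{bovierhartung2017}), this would be a detour; the intended proof needs neither.
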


\begin{proof}
	We use Laplace transform again and the argument is very similar to the previous one: we just write down the liminf part. Given $\eta,\varepsilon>0$ and $\lambda>0$, there exists, by (\ref{rho}), $A_0,t_0>0$ such that $\mathbb P (\lambda\rho^{[-A,A]^c}_{\beta,t}(\mathbb R_+)>\eta) <\varepsilon$ for every $t\geq t_0$ and $A\geq A_0$. This gives
	\[
	\es\big[\e^{-\lambda\rho_{\beta,t}(\re_+)}\big]
	\geq \e^{-\eta}\big(\es\big[\e^{-\lambda\rho^{[-A,A]}_{\beta,t}(\re_+)}\big]-\varepsilon\big), \qquad \text{ for } t\geq t_0 \text{ and } A\geq A_0.
	\]
	\noindent
	This time, using the fact that $\rho^{[-A,A]}_{\beta,t}(\re_+)=\mathcal E_t(\exp(\beta \, \cdot)\ind_{[-A,A]})$ together with the convergence of the extremal process, see Equation (\ref{cv}), concludes the proof of Proposition \ref{mass}.	

\end{proof}
\noindent	
This last result shows that in fact the convergence in distribution $\rho_{\beta,t}\longrightarrow \rho_{\beta}$ holds for the weak topology by \cite[Theorem 4.19]{kallenberg2017}. 
\begin{rem}\label{extensionf}	
	The arguments of Proposition \ref{mass} show that the convergence in distribution of $\mathcal{E}_t(f)$ to $\mathcal E(f)$ holds for continuous function on $\mathbb R$ that are $\mathcal O(\exp(\alpha x))$ as $x\rightarrow -\infty$ for some $\alpha>\sqrt 2$, and without 	
	any restriction on the behaviour at $+\infty$.
\end{rem}

\begin{rem}
	Following Remark \ref{alternative_exp} and using the fact that $\sum_k\e^{\beta\xi_k}\sum_{j}\e^{\beta\Delta_j^k}$ has the same distribution as $\e^{\beta c_\beta}\sum_k\e^{\beta\xi_k}$ with $c_\beta\de\beta_c^{-1}\log\es[\e^{\beta_c X_\beta}]$, see Subsection \ref{func}, the previous convergence can be alternatively stated
	\[
	\summ \e^{\beta\tilde{x}_u(t)}
	\cd
	 Z(\re_+)^{\frac{\beta}{\beta_c}}\e^{\beta c_\beta}\sum_{k}\e^{\beta\xi_k},
	\]
	where the serie in the right term has a stable law.
\end{rem}

%%%%%%%%%%%%%%%%%%%
\subsection{Convergence of $(\rho_{\beta_1,t}\otimes \rho_{\beta_2,t})_t$ }
\label{twot}
%%%%%%%%%%%%%%%%%%%

Recall that  
$$ \rho_{\beta}=\sum_{i,j} \e^{\beta(p_i+\Delta^i_j)}\delta_{q_i}, \qquad \forall \beta>\beta_c.$$
\begin{prop}\label{wdconvergence}
For $\beta_1,\beta_2>\beta_c$, we have $\rho_{\beta_1,t}\otimes \rho_{\beta_2,t}\cwdnot\rho_{\beta_1}\otimes \rho_{\beta_2}$, when $t\rightarrow\infty$.

\end{prop}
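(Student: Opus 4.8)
The plan is to bootstrap the joint weak convergence of the pair $(\rho_{\beta_1,t},\rho_{\beta_2,t})$ from the already-established marginal convergences (Propositions \ref{liminf} and \ref{mass}, extended to the weak topology), following exactly the truncation strategy used for a single temperature. The only genuinely new input needed is the \emph{joint} convergence on a bounded window $D=[-A,A]$, and this follows from Proposition \ref{ext}: both $\rho^D_{\beta_1,t}$ and $\rho^D_{\beta_2,t}$ are deterministic continuous functionals of the single process $\widetilde{\mathcal E}_t$ restricted to $\mathbb R_+\times D$ (namely $\rho^D_{\beta_i,t}(f)=\widetilde{\mathcal E}_t((x,h)\mapsto f(x)\e^{\beta_i h}\ind_D(h))$), so the continuous mapping theorem applied to $\widetilde{\mathcal E}_t\cvdnot\widetilde{\mathcal E}$ gives $(\rho^D_{\beta_1,t},\rho^D_{\beta_2,t})\cwdnot(\rho^D_{\beta_1},\rho^D_{\beta_2})$ after the same $\phi_\varepsilon,\psi_\varepsilon$ sandwiching argument used in Subsection \ref{rhoc} to handle the two approximating indicators simultaneously.

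Concretely I would proceed as follows. By Kallenberg \cite[Theorem 4.11 or 4.19]{kallenberg2017}, weak convergence of the random pair of measures on $\mathbb R_+\times\mathbb R_+$ is equivalent to convergence of the joint Laplace functionals $\es[\exp(-\rho_{\beta_1,t}(f_1)-\rho_{\beta_2,t}(f_2))]$ for all $f_1,f_2\in C_c^+(\mathbb R_+)$ (one must check that $\rho_{\beta_1}\otimes\rho_{\beta_2}$ is a.s. boundedly finite on $\mathbb R_+\times\mathbb R_+$, which follows from $\rho_{\beta_i}(\mathbb R_+)<\infty$ a.s.). For the upper bound, monotonicity $\rho^D_{\beta_i,t}(f_i)\le\rho_{\beta_i,t}(f_i)$ gives $\limsup_t\es[\e^{-\rho_{\beta_1,t}(f_1)-\rho_{\beta_2,t}(f_2)}]\le\es[\e^{-\rho^D_{\beta_1}(f_1)-\rho^D_{\beta_2}(f_2)}]$, and letting $A\to\infty$ with dominated convergence yields the bound with $\rho_{\beta_i}$. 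For the lower bound, fix $\eta,\varepsilon>0$; by Equation (\ref{rho}) applied to $\beta_1$ and to $\beta_2$ and the boundedness of $f_1,f_2$, there are $A_0,t_0$ with $\mathbb P(\rho^{D^c}_{\beta_1,t}(f_1)+\rho^{D^c}_{\beta_2,t}(f_2)>\eta)<\varepsilon$ for $t\ge t_0$, $D=[-A_0,A_0]$. Splitting $\rho_{\beta_i,t}(f_i)=\rho^D_{\beta_i,t}(f_i)+\rho^{D^c}_{\beta_i,t}(f_i)$ and restricting to the event where the tail contribution is $\le\eta$ gives
\[
\es\big[\e^{-\rho_{\beta_1,t}(f_1)-\rho_{\beta_2,t}(f_2)}\big]\ \ge\ \e^{-\eta}\big(\es\big[\e^{-\rho^D_{\beta_1,t}(f_1)-\rho^D_{\beta_2,t}(f_2)}\big]-\varepsilon\big),
\]
and then the joint convergence on the window $D$ together with $\varepsilon,\eta\downarrow 0$ closes the estimate.

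The only point requiring a little care — and the one I expect to be the main obstacle — is the joint convergence on the bounded window, i.e.\ upgrading the sandwiching argument of Subsection \ref{rhoc} from one temperature to two. One chooses a single pair $\phi_\varepsilon\le\ind_{[-A,A]}\le\psi_\varepsilon$ and sets $g^{(i)}_\varepsilon(x,h)=f_i(x)\e^{\beta_i h}\phi_\varepsilon(h)$, $h^{(i)}_\varepsilon(x,h)=f_i(x)\e^{\beta_i h}\psi_\varepsilon(h)$, all compactly supported; then $\widetilde{\mathcal E}_t(h^{(1)}_\varepsilon)+\widetilde{\mathcal E}_t(h^{(2)}_\varepsilon)\ge\rho^D_{\beta_1,t}(f_1)+\rho^D_{\beta_2,t}(f_2)\ge\widetilde{\mathcal E}_t(g^{(1)}_\varepsilon)+\widetilde{\mathcal E}_t(g^{(2)}_\varepsilon)$, and Proposition \ref{ext} gives the limits of the two bracketing Laplace transforms, which merge as $\varepsilon\to0$ by dominated convergence (using $\rho^D_{\beta_i}(\partial D)=0$ a.s., since the atoms $p_i+\Delta^i_j$ have no mass at $\pm A$ a.s.). This is routine once one notices that taking $f_1,f_2$ both fixed means everything is still a functional of the \emph{same} random object $\widetilde{\mathcal E}_t$, so no new independence or correlation structure has to be analyzed — which is why the one-temperature proof transcribes essentially verbatim.
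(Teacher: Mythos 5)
Your treatment of the vague part of the convergence is essentially identical to the paper's: the sandwich argument on a window $D=[-A,A]$ to reduce to Proposition \ref{ext}, monotonicity for the $\limsup$, and Equation \eqref{rho} plus the conditioning-on-a-small-tail trick for the $\liminf$. The observation that both $\rho^D_{\beta_i,t}$ are functionals of the single random object $\widetilde{\mathcal E}_t$ so that no new joint structure needs to be analyzed is exactly the point, and you handle it correctly.

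However, there is a genuine gap in the upgrade from vague to weak convergence. You write that weak convergence of the pair is \emph{equivalent} to convergence of the joint Laplace functionals for $f_1,f_2\in C_c^+(\mathbb R_+)$, subject only to a bounded-finiteness check on the limit. That is not correct: convergence of Laplace functionals over compactly supported test functions characterizes \emph{vague} convergence, and bounded finiteness of the limit does not prevent mass escape along the sequence. To pass to the weak topology via \cite[Theorem 4.19]{kallenberg2017} one must \emph{additionally} establish the distributional convergence of the total mass, i.e.\ $\rho_{\beta_1,t}(\re_+)\,\rho_{\beta_2,t}(\re_+)\cd\rho_{\beta_1}(\re_+)\,\rho_{\beta_2}(\re_+)$, which requires the \emph{joint} convergence of $(\rho_{\beta_1,t}(\re_+),\rho_{\beta_2,t}(\re_+))$ — not merely the two marginal convergences from Proposition \ref{mass}. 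The paper gets this from the identity $\es[\e^{-\lambda\rho_{\beta_1,t}(\re_+)-\mu\rho_{\beta_2,t}(\re_+)}]=\es[\e^{-\mathcal E_t(\lambda\exp(\beta_1\cdot)+\mu\exp(\beta_2\cdot))}]$ combined with Remark \ref{extensionf}, which extends the test-function class for $\mathcal E_t$ to continuous functions growing at most exponentially at $-\infty$. Your proposal never invokes this extension and never addresses the total mass, so the weak-topology conclusion is not justified as stated.
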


\begin{proof}
Let us first prove the convergence for the vague topology: it is sufficient to prove that for any $f_1,f_2\in C_c^+(\mathbb R_+)$,
$$\es[\e^{-\rho_{\beta_1,t}(f_1)-\rho_{\beta_2,t}(f_2)}]\longrightarrow \es[\e^{-\rho_{\beta_1}(f_1)-\rho_{\beta_2}(f_2)}], \qquad \text{as } t\rightarrow\infty.$$
As in the beginning of Subsection \ref{onet}, a direct consequence of the extended convergence of the extremal process is the following convergence, for any  $D=[-A,A]$,
$$\rho^D_{\beta_1,t}\otimes \rho^D_{\beta_2,t}\cvdnot\rho^D_{\beta_1}\otimes \rho^D_{\beta_2}, \qquad \text{as } t\rightarrow\infty,$$
where $\rho^D_{\beta,t}$ and $\rho^D_{\beta}$ were defined in (\ref{rhod}).
From
\[
\rho^D_{\beta_1,t}(f_1)+\rho^D_{\beta_2,t}(f_2)  \leq \rho_{\beta_1,t}(f_1)+\rho_{\beta_2,t}(f_2) ,
\]
we deduce, as in the proof of Proposition \ref{liminf}, that
\[ \underset{t\rightarrow \infty}{\mathrm{limsup~}}\es[\e^{-\rho_{\beta_1,t}(f_1)-\rho_{\beta_2,t}(f_2)}]\leq \es[\e^{-\rho_{\beta_1}(f_1)-\rho_{\beta_2}(f_2)}].
\]
And it follows from Equation \eqref{rho} that
$$\underset{A\rightarrow \infty}{\mathrm{lim}} \underset{t\rightarrow \infty}{\mathrm{limsup~}} \mathbb P\big(\rho^{[-A,A]^c}_{\beta_1,t}(f_1)+\rho^{[-A,A]^c}_{\beta_2,t}(f_2)>\eta \big)=0.$$
\noindent
Fix $\varepsilon>0$ and let $A_0,t_0$ be large enough such that 
$\p\big(\rho^{[-A_0,A_0]^c}_{\beta_1,t}(f_1)+\rho^{[-A_0,A_0]^c}_{\beta_2,t}(f_2)>\eta\big) <\varepsilon$ for $t\geq t_0$.
Then, using $D=[-A_0,A_0]$, we obtain, for $t\geq t_0$,
\begin{align*}
\es[\e^{-\rho_{\beta_1,t}(f_1)-\rho_{\beta_2,t}(f_2)}]& = \es[\e^{-\rho^D_{\beta_1,t}(f_1)-\rho^D_{\beta_2,t}(f_2)}\e^{-\rho^{D^c}_{\beta_1,t}(f_1)-\rho^{D^c}_{\beta_2,t}(f_2)}]\\
& \geq \es[\e^{-\rho^D_{\beta_1,t}(f_1)-\rho^D_{\beta_2,t}(f_2)}\e^{-\eta}\ind_{\rho^{D^c}_{\beta_1,t}(f_1)+\rho^{D^c}_{\beta_2,t}(f_2)\leq\eta}]\\
& \geq \big(\es[\e^{-\rho^D_{\beta_1,t}(f_1)-\rho^D_{\beta_2,t}(f_2)}]-\varepsilon\big)\e^{-\eta},
\end{align*}
and the conclusion follows in the same way as in the proof of Proposition \ref{liminf}.\\
	In order to obtain the convergence for the weak topology, it is sufficient, thanks to  \cite[Theorem 4.19]{kallenberg2017}, to establish
	\[\rho_{\beta_1,t}(\re_+)\rho_{\beta_2,t}(\re_+)\cd\rho_{\beta_1}(\re_+)\rho_{\beta_2}(\re_+).
	\]
	For $\lambda,\mu>0$, the following equality
	\[
	\es[\e^{-\lambda\rho_{\beta_1,t}(\re_+)-\mu\rho_{\beta_2,t}(\re_+)}]=\es[\e^{-\mathcal E_t(\lambda \exp(\beta_1\cdot)+\mu\exp(\beta_2\cdot))}],
	\]
	with Remark \ref{extensionf} prove the joint convergence of $(\rho_{\beta_1,t}(\re_+),\rho_{\beta_2,t}(\re_+)) $ towards $(\rho_{\beta_1}(\re_+),\rho_{\beta_2}(\re_+))$, concluding the proof.

\end{proof}

%%%%%%%%%%%%%%%%%%%
\subsection{Proof of Part (ii) of Theorem \ref{thm1}}\label{ov}
%%%%%%%%%%%%%%%%%%%

The first step of the proof is to show that the convergence of $\mathcal G_{\beta,t}\otimes \mathcal G_{\beta',t} (q_t(u,v)\geq a)$ can be handled with $\gbg(|\gtu-\gtv|\leq \delta)$: this is the content of the following three propositions. Then we will use the convergence of $(\rho_{\beta,t}\otimes \rho_{\beta',t})_t$ from the previous subsection with the expression
\[
\gbg (|\gtu-\gtv|\leq \delta) = \dfrac{\rho_{\beta,t}\otimes \rho_{\beta',t}(\Delta^\delta)}{\rho_{\beta,t}\otimes \rho_{\beta',t}(\mathbb R_+^2)}, \qquad \text{ where } \Delta^\delta \de \{(x,y)\in \mathbb R_+^2 : |x-y|\leq \delta\}.
\]
Recall that $\nt(D)=\{u\in \nt : \tilde{x}_u(t)\in D \}$.
\begin{prop}\label{>a} Let $\delta>0$, $a\in(0,1)$ and $D\subset\mathbb R$ compact, then
	$$\underset{t\rightarrow \infty}{\mathrm{lim}}\p \big(\exists u,v \in \nt(D) : q_t(u,v)\geq a \mathrm{~and~} |\gtu - \gtv|>\delta\big)=0.$$
\end{prop}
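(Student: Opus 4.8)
The plan is to compare $\gtu$ with $\gamma_{at}(u)$: I want to argue that $q_t(u,v)\ge a$ forces $u$ and $v$ to have the \emph{same} value of $\gamma$ up to time $at$, so that $|\gtu-\gtv|$ is governed only by the short tail $\sum_{v\le u,\,b_v>at}\e^{-b_v}$ of the series defining $\gamma$, and then to show this tail is uniformly negligible over all of $\nt$.

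First I would observe that $q_t(u,v)=d_{u\wedge v}/t$, so $q_t(u,v)\ge a$ means $d_{u\wedge v}\ge at$; since there are only countably many particles and each death time has a density (it is a sum of i.i.d.\ exponentials), a.s.\ no particle dies exactly at time $at$, hence on $\{q_t(u,v)\ge a\}$ one actually has $d_{u\wedge v}>at$. I claim this implies $\gamma_{at}(u)=\gamma_{at}(v)$. Indeed, the ancestors $v\le u$ with $b_v\le at$ are precisely the ancestors of $u\wedge v$ with birth-time $\le at$: if some such $v$ were not an ancestor of $u\wedge v$, the child of $u\wedge v$ lying on the line to $u$ would be born at time $d_{u\wedge v}\le at$, contradicting $d_{u\wedge v}>at$. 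On this common ancestral segment $u$ and $v$ share all their components, so the two sums defining $\gamma_{at}$ coincide.

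Consequently, setting $R_t(u)\de \gtu-\gamma_{at}(u)=\sum_{v\le u,\,b_v>at}u_{|v|}\e^{-b_v}\ge 0$, one has $|\gtu-\gtv|=|R_t(u)-R_t(v)|\le\max\{R_t(u),R_t(v)\}$ on $\{q_t(u,v)\ge a\}$, so the event in the statement is contained in $\{\exists\,u\in\nt:\ R_t(u)>\delta\}$; note that the restriction $u,v\in\nt(D)$ is not even needed. Since every $v$ appearing in $R_t(u)$ has $b_v>at$ and is one of the at most $|u|+1$ ancestors of $u$, we get $R_t(u)<(|u|+1)\e^{-at}$, and therefore the probability to be bounded is at most $\p\big(\max_{u\in\nt}|u|\ge\delta\e^{at}-1\big)$.

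It then remains to check that the maximal generation $\max_{u\in\nt}|u|$ is $o(\e^{at})$ with high probability — a far weaker statement than its known linear growth. I would obtain this from the many-to-one lemma for the binary, rate-one Galton--Watson tree: $\es\big[\#\{u\in\nt:|u|\ge K\}\big]=\e^{t}\,\p(\mathrm{Poisson}(2t)\ge K)$, the $\mathrm{Poisson}(2t)$ being the number of branch points along the spine, which branches at rate $2$. Taking $K=\lceil\delta\e^{at}-1\rceil$, the standard Poisson upper-tail estimate gives $\p(\mathrm{Poisson}(2t)\ge K)\le\exp(-c\,t\,\e^{at})$ for $t$ large and some $c=c(a,\delta)>0$, so Markov's inequality bounds the probability above by $\e^{t}\exp(-c\,t\,\e^{at})\to0$. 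The only analytic input is this last elementary estimate; the genuinely delicate point, and the one I expect to be the main obstacle to a clean writeup, is the combinatorial bookkeeping in the second paragraph establishing $\gamma_{at}(u)=\gamma_{at}(v)$.
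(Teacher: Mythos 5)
Your proof is correct but takes a genuinely different, more elementary route than the paper's. The paper invokes \cite[Lemma 4.1]{bovierhartung2017}, which supplies, for $r$ large and $t>3r$, high-probability events $\mathcal A^\gamma_{r,t}(D)=\{\forall u\in\nt(D):\gtu-\gru\leq\e^{-r/2}\}$; there the restriction to a compact window $D$ is essential, since that lemma controls the tail of the $\gamma$-series only for extremal particles, and the paper then combines it with the very observation you make, that $q_t(u,v)\geq a$ forces $\gamma_r(u)=\gamma_r(v)$ for all $r\leq at$. You replace that input by the deterministic bound $\gtu-\gamma_{at}(u)\leq(|u|+1)\e^{-at}$, valid for \emph{every} $u\in\nt$, followed by a first-moment bound on the maximal generation number via the identity $\es[\#\{u\in\nt:|u|\geq K\}]=\e^t\,\p(\mathrm{Poisson}(2t)\geq K)$, which is indeed correct (one computes $\es[\#\{u\in\nt:|u|=k\}]=\e^{-t}(2t)^k/k!$, equivalently the size-biased spine branches at rate $2$). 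The combinatorial step you flagged as potentially delicate is exactly right: if $d_{u\wedge v}>at$, every ancestor $w$ of $u$ with $b_w\leq at$ must satisfy $w\leq u\wedge v$ (else $b_w\geq d_{u\wedge v}>at$), and for such $w$ the contributions $u_{|w|}\e^{-b_w}=v_{|w|}\e^{-b_w}$ to the two $\gamma$-sums agree; the density argument ruling out $d_{u\wedge v}=at$ is fine because $t$ is fixed before the probability is bounded. Your route buys self-containedness and shows the restriction to $\nt(D)$ is in fact superfluous for this particular proposition; what it gives up is any quantitative content, the crude $(|u|+1)\e^{-at}$ being vastly more wasteful than the sharp $\e^{-r/2}$ decay provided by the Bovier--Hartung estimate. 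Both proofs are valid.
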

\begin{proof}
Fix $\varepsilon>0$. For $r<t$, define, as in \cite[Lemma 4.1]{bovierhartung2017}, the events
\[
\mathcal A^\gamma_{r,t}(D) =\{ \forall u \in \nt(D) : \gtu-\gru \leq \e^{-r/2} \}.
\]
The same lemma gives the existence of $r(D,\varepsilon)\geq 0$ such that $\p(\mathcal A^\gamma_{r,t}(D))\geq 1-\varepsilon$ whenever $r>r(D,\varepsilon)$ and $t>3r$. Pick $r>0$ such that $2\e^{-r/2}<\delta$ and $r>r(D,\varepsilon)$. If $t>\max\{3r,\frac{r}{a}\}$, we have, on $\mathcal A^\gamma_{r,t}(D)$,
\[
\gtu-\gru \leq \frac{\delta}{2},\qquad \forall u \in\nt(D).
\]
Then observe that $q_t(u,v)\geq a$ implies that the trajectories of $u$ and $v$ coincide at least up to time $at>r$ so that $|\gtu - \gtv|\leq \delta$, on $\mathcal A^\gamma_{r,t}(D)$.
Therefore
$\{\exists u,v \in \nt(D) : q_t(u,v)\geq a \mathrm{~and~} |\gtu- \gtv|>\delta\}\subset \mathcal A^\gamma_{r,t}(D)^c$,
which has a probability smaller than $\varepsilon$.
\end{proof}
\noindent
The following proposition is direct consequence of Theorem 2.1 in \cite{abk2011} and it shows that the choice of $a\in(0,1)$ has no effect, the overlap being concentrated on $0$ and $1$ at the limit.

\begin{prop}[Arguin, Bovier and Kistler \cite{abk2011}]\label{01}
	For any compact set $D\subset \mathbb R$ and $a\in(0,1/2)$,
\[
\underset{t\rightarrow\infty}{\mathrm{lim}}~ \p \big(\exists u,v \in \nt(D) : q_t (u,v) \in (a,1 - a)\big) = 0.
\]
\end{prop}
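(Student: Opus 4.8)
The plan is to obtain Proposition~\ref{01} as a direct corollary of \cite[Theorem~2.1]{abk2011}, which is precisely the genealogical dichotomy for the front: two particles lying close to the leading edge at time $t$ either have a most recent common ancestor at a time that stays bounded, or at a time of order $t$. Concretely, I would use that for every $b>0$,
\[
\lim_{r\to\infty}\;\limsup_{t\to\infty}\;\p\big(\exists\,u,v\in\nt:\ x_u(t)\wedge x_v(t)\ge m_t-b,\ d_{u\wedge v}\in(r,\,t-r)\big)=0.
\]

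First I would use the compactness of $D$ to fix $A>0$ with $D\subset[-A,A]$, so that $\nt(D)\subset\{u\in\nt:\,x_u(t)\ge m_t-A\}$; it is then enough to bound the probability in the statement after enlarging $\nt(D)$ to the set of particles above the level $m_t-A$. Next I would translate the overlap constraint into a constraint on branching times: since $q_t(u,v)=d_{u\wedge v}/t$, the event $\{q_t(u,v)\in(a,1-a)\}$ is exactly $\{d_{u\wedge v}\in(at,(1-a)t)\}$, and because $a\in(0,1/2)$ one has, for every fixed $r>0$ and all $t>r/a$, the inclusion $(at,(1-a)t)\subset(r,t-r)$. Putting these two reductions together gives, for $t>r/a$,
\[
\p\big(\exists\,u,v\in\nt(D):\ q_t(u,v)\in(a,1-a)\big)\ \le\ \p\big(\exists\,u,v\in\nt:\ x_u(t)\wedge x_v(t)\ge m_t-A,\ d_{u\wedge v}\in(r,t-r)\big),
\]
and letting first $t\to\infty$ and then $r\to\infty$ makes the right-hand side vanish by the displayed statement above.

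Since the argument is just an unwinding of definitions together with one scale comparison, I do not expect a genuine obstacle; the only point requiring care is matching the hypotheses of \cite[Theorem~2.1]{abk2011}. If that theorem is phrased in terms of particles within distance $b$ of the actual maximum $\max_{w\in\nt}x_w(t)$ rather than of $m_t$, one inserts a short extra step: the centred maximum $\txt$ is tight, so for any $\varepsilon>0$ there is $M$ with $\p(\txt>M)<\varepsilon$ for all large $t$, and on $\{\txt\le M\}$ one has $\{x_u(t)\ge m_t-A\}\subset\{x_u(t)\ge\max_{w\in\nt}x_w(t)-(A+M)\}$, the $\varepsilon$ being absorbed in the final limit. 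Up to this cosmetic adjustment, the proposition is indeed an immediate consequence of \cite{abk2011}.
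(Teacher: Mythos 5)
Your proposal is correct and takes the same route the paper does: the paper simply states the proposition as "a direct consequence of Theorem 2.1 in \cite{abk2011}" without writing out the reduction, and your argument (compactness of $D$, rescaling the overlap window $(at,(1-a)t)$ inside $(r,t-r)$ for $t>r/a$, then sending $t\to\infty$ followed by $r\to\infty$, with the tightness of $\txt$ handling any phrasing mismatch in the cited theorem) is exactly the unwinding implied. No gaps.
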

\noindent
A last result is needed.

\begin{prop}\label{<a} Let $a\in(0,1)$ and $D\subset\mathbb R$ compact, we have
	\[
	\underset{\delta\rightarrow 0}{\mathrm{lim~}}\underset{t\rightarrow \infty}{\mathrm{limsup~}}\p \big(\exists u,v \in \nt(D) : q_t(u,v)< a \mathrm{~and~} |\gtu-\gtv|\leq\delta\big)=0.
	\]
\end{prop}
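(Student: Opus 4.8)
The plan is to use the genealogical description of the extremal particles to reduce the event to a statement about the finitely many ancestral particles, at a fixed large time, that carry an extremal family, and then to exploit that their $\gamma$-positions do not accumulate.

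Fix $a\in(0,1)$, the compact set $D$ and $\varepsilon>0$. By the genealogy of the extremal particles of the BBM, \cite[Theorem~2.1]{abk2011} (of which Proposition~\ref{01} is a particular case), there is $r_0$, depending only on $D$ and $\varepsilon$, which we also take larger than the threshold $r(D,\varepsilon)$ of \cite[Lemma~4.1]{bovierhartung2017}, such that for all $t$ large the event
\[
\mathcal B_{r_0,t}\de\mathcal A^\gamma_{r_0,t}(D)\cap\big\{\forall\,u,v\in\nt(D):\ d_{u\wedge v}\le r_0\ \text{or}\ d_{u\wedge v}\ge t-r_0\big\}
\]
has probability at least $1-2\varepsilon$, where $\mathcal A^\gamma_{r_0,t}(D)$ is the event from the proof of Proposition~\ref{>a}. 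On $\mathcal B_{r_0,t}$, as soon as $t>r_0/(1-a)$, any pair $u,v\in\nt(D)$ with $q_t(u,v)<a$ satisfies $d_{u\wedge v}\le r_0$ — the alternative $d_{u\wedge v}\ge t-r_0$ would give $q_t(u,v)\ge 1-r_0/t>a$ — hence almost surely $d_{u\wedge v}<r_0$, so $u$ and $v$ descend from two distinct particles $w\neq w'\in\mathcal N_{r_0}$ with $\gamma_{r_0}(u)=\gamma_{r_0}(w)$ and $\gamma_{r_0}(v)=\gamma_{r_0}(w')$. Since on $\mathcal A^\gamma_{r_0,t}(D)$ one has $0\le\gtu-\gamma_{r_0}(u)\le\e^{-r_0/2}$, and likewise for $v$, it follows that $|\gtu-\gtv|\ge|\gamma_{r_0}(w)-\gamma_{r_0}(w')|-2\e^{-r_0/2}$. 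Thus, on $\mathcal B_{r_0,t}$ with $t$ large, the occurrence of $\{\exists\,u,v\in\nt(D):\ q_t(u,v)<a,\ |\gtu-\gtv|\le\delta\}$ forces two distinct particles $w,w'\in\mathcal N_{r_0}$, each having a descendant in $\nt(D)$, to satisfy $|\gamma_{r_0}(w)-\gamma_{r_0}(w')|\le\delta+2\e^{-r_0/2}$.

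It remains to control the spacing of the $\gamma$-positions of the time-$r_0$ ancestors carrying an extremal family: the claim is that
\[
\limsup_{t\to\infty}\p\Big(\exists\,w\neq w'\in\mathcal N_{r_0}\ \text{each with a descendant in }\nt(D):\ |\gamma_{r_0}(w)-\gamma_{r_0}(w')|\le\eta\Big)\ \longrightarrow\ 0
\]
as $\eta\to0$ and then $r_0\to\infty$. Granting this, one lets $\delta\to0$ in the previous paragraph (the limiting pairwise distances have no atoms, so the threshold $\delta+2\e^{-r_0/2}$ may be replaced by $2\e^{-r_0/2}$), then chooses $r_0$ large enough that the left-hand side with $\eta=2\e^{-r_0/2}$ is $<\varepsilon$, and finally lets $\varepsilon\to0$; this gives the proposition. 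To prove the claim I would read the joint convergence of the extremal process with the labels $\gamma$ at the intermediate time $r_0$, in the spirit of Proposition~\ref{ext}: as $t\to\infty$ these $\gamma$-positions converge to a finite point set on $\re_+$ which, as $r_0\to\infty$, converges to the family of cluster positions $q_i$ with $p_i\ge\inf D$; by Remark~\ref{alternative_exp} these are finitely many i.i.d.\ samples of $Z(\cdot)/Z(\re_+)$, a law which is a.s.\ diffuse, hence a.s.\ pairwise distinct with a.s.\ positive minimal spacing.

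The genealogical reduction of the first step is comparatively soft; the real work is the last claim, namely controlling — uniformly in $t$, and then as $r_0\to\infty$ — the spacing of the $\gamma$-values of the ancestral particles carrying an extremal family, and identifying the limit with the cluster positions of the decorated Poisson process. This is exactly where the genealogical enrichment of the extremal process and the non-atomicity of the intensity $Z(\mathrm d\nu)$ enter.
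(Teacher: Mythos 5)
Your genealogical reduction in the first paragraph is correct: on a high-probability event combining $\mathcal A^\gamma_{r_0,t}(D)$ with the overlap dichotomy of \cite[Theorem~2.1]{abk2011}, a pair $u,v\in\nt(D)$ with $q_t(u,v)<a$ and $|\gtu-\gtv|\le\delta$ forces two distinct time-$r_0$ ancestors carrying an extremal descendant whose $\gamma_{r_0}$-values are within $\delta+2\e^{-r_0/2}$ of each other. However, this is not the route the paper takes, and the step you explicitly flag as ``the real work'' — the claim that, diagonally in $\eta=2\e^{-r_0/2}$ and $r_0$, the $\gamma_{r_0}$-values of such ancestors do not accumulate, uniformly in $t$ — is left as a sketch and constitutes a genuine gap. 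In effect you are attempting to re-derive what \cite[Lemma~4.3]{bovierhartung2017} already provides, and the paper's proof is simply: reduce to $a=1/4$ via Proposition~\ref{01} and monotonicity, then apply \cite[Lemma~4.3]{bovierhartung2017} with $t=4r$, which gives the required estimate directly.

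The sketch you give for the claim has non-trivial holes. For fixed $r_0$, the set of time-$r_0$ ancestors with a descendant in $\nt(D)$ is random and does not obviously stabilize as $t\to\infty$; asserting that ``these $\gamma$-positions converge to a finite point set'' and that this set converges as $r_0\to\infty$ to the cluster positions $q_i$ with $p_i\ge\inf D$ would require an argument about the joint law of $(\gamma_{r_0}(u),\tilde x_u(t))_{u\in\nt}$ at the intermediate time $r_0$, which is stronger than what Proposition~\ref{ext} states (that is a statement about $\gamma_t$, not $\gamma_{r_0}$). Moreover, the diagonal statement you need — for $r_0$ large, $\limsup_t\p(\min |\gamma_{r_0}(w)-\gamma_{r_0}(w')|\le 2\e^{-r_0/2})<\varepsilon$ — has to compete with the exponential growth of $|\mathcal N_{r_0}|$, so it is not a soft consequence of the a.s.\ distinctness of the $\gamma_{r_0}$-values; this is precisely the uniformity that \cite[Lemma~4.3]{bovierhartung2017} is designed to deliver, and it should either be cited as the paper does, or proved in full.
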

\begin{proof}
	Thanks to Proposition \ref{01} and by monotonicity in $a$, we can assume that $a=\frac{1}{4}$. Given $\varepsilon>0$, Lemma 4.3 in \cite{bovierhartung2017} provides $\delta>0$ and $r(\delta,\varepsilon)$ such that for any $r>r(\delta,\varepsilon)$ and $t>3r$
	\[
	\p\big(\exists u,v \in \nt(D) : q_t (u, v) \leq \frac{r}{t} \mathrm{~and~} |\gtu-\gtv|\leq\delta\big) <\varepsilon.
	\]
	Taking $t=4r$ concludes the proof.
\end{proof}

\begin{proof}[Proof of Theorem \ref{thm1}]
Let us denote $A_{\delta,t} \de\{(u,v)\in\nt^2 : |\gtu-\gtv|\leq\delta \}$, we have
\begin{eqnarray*}
&&\g(q_t(u,v)\geq a)-\g(A_{\delta,t})
\\
&=&\g(q_t(u,v)\geq a;A_{\delta,t}^c) -\g(q_t(u,v)< a; A_{\delta,t}).
\end{eqnarray*}
Then, with $D=[-A,A]$, one gets
\begin{eqnarray*}
&&|\g(q_t(u,v)\geq a)-\g(A_{\delta,t})|
\\
&\leq& \g(q_t(u,v)\geq a; A_{\delta,t}^c\cap\nt(D)^2)
+\g(q_t(u,v)<a;A_{\delta,t}\cap\nt(D)^2)
\\
&&+\g((u,v)\notin \nt(D)^2)
\\
&\leq& \ind_{\{\exists u,v\in\nt(D) \,:\, q_t(u,v)\geq a, |\gtu-\gtv|>\delta\}}
+ \ind_{\{\exists u,v\in\nt(D) \,:\, q_t(u,v)< a, |\gtu-\gtv|\leq\delta\}}
\\
&&+\mathcal G_{\beta,t}(\nt(D^c)) + \mathcal G_{\beta',t}(\nt(D^c)).
\end{eqnarray*}
We thus have, for every $\eta>0$,
\begin{eqnarray*}
	&&\p(|\g(q_t(u,v)\geq a)-\g(A_{\delta,t})|>\eta)
	\\
	&\leq&\p(\exists u,v\in\nt(D) : q_t(u,v)\geq a, |\gtu-\gtv|>\delta)+\\
	&&\p(\exists u,v\in\nt(D): q_t(u,v)< a, |\gtu-\gtv|\leq\delta)+\p(\mathcal G_{\beta,t}(\nt(D^c)) + \mathcal G_{\beta',t}(\nt(D^c))>\eta/3).
\end{eqnarray*}

\noindent The first term can be handled with Proposition \ref{>a}, the second one with Proposition \ref{<a} and the following lemma, whose proof is postponed to Appendix \ref{a}, deals with the third term.
\begin{lem} Let $\eta>0$, then
	$\lim\limits_{A\to \infty} \limsup\limits_{t\to\infty} \p(\mathcal G_{\beta,t}(\nt([-A,A]^c))>\eta)=0$.
\end{lem}
\noindent
We finally obtain:
\begin{equation}\label{adelta}
\lim\limits_{\delta \to 0} \limsup\limits_{t \to \infty} \p (|\g(q_t(u,v)\geq a)-\g(|\gtu-\gtv|\leq\delta)|>\eta)=0.
\end{equation}
\noindent
Now for every $\delta>0$, choose a continuous function $f_\delta$ on $\mathbb R_+^2$ such that $\ind_{\Delta^\delta}\leq f_\delta \leq \ind_{\Delta^{2\delta}}$. We have
$$\dfrac{\rho_{\beta,t}\otimes \rho_{\beta',t}(f_{\delta/2})}{\rho_{\beta,t}\otimes \rho_{\beta',t}(\mathbb R_+^2)} \leq\g(|\gtu-\gtv|\leq\delta)\leq \dfrac{\rho_{\beta,t}\otimes \rho_{\beta',t}(f_\delta)}{\rho_{\beta,t}\otimes \rho_{\beta',t}(\mathbb R_+^2)}.$$
Then, for $\lambda>0$, the convergence from Proposition (\ref{wdconvergence}) gives
\begin{align*}
\es\Big[\exp\big(-\lambda\frac{\rho_{\beta}\otimes \rho_{\beta'}(f_{\delta})}{\rho_{\beta}\otimes \rho_{\beta'}(\mathbb R_+^2)}\big)\Big] &\leq\liminf\limits_{t \to \infty}\es[\exp\{-\lambda\g(|\gtu-\gtv|\leq\delta)\}],
\end{align*}
and
\begin{align*}
\limsup\limits_{t \to \infty}\es[\exp\{-\lambda\g(|\gtu-\gtv|\leq\delta)\}]&\leq \es\Big[\exp\big(-\lambda\frac{\rho_{\beta}\otimes \rho_{\beta'}(f_{\delta/2})}{\rho_{\beta}\otimes \rho_{\beta'}(\mathbb R_+^2)}\big)\Big].
\end{align*}
The dominated convergence theorem with $\delta\rightarrow0$ and Equation \eqref{adelta} yield
$$\lim\limits_{t \to \infty} \es\Big[\exp\{-\lambda\g(q_t(u,v)\geq a)\}\Big]= \es\Big[\exp\big(-\lambda \frac{\rho_{\beta}\otimes \rho_{\beta'}(\Delta)}{\rho_{\beta}\otimes \rho_{\beta'}(\mathbb R_+^2)}\big)\Big].$$
Finally, observe that almost surely
\[
\frac{\rho_{\beta}\otimes \rho_{\beta'}(\Delta)}{\rho_{\beta}\otimes \rho_{\beta'}(\mathbb R_+^2)}=\q,
\]
since $Z(\mathrm d\nu)$ is a.s. non-atomic  by \cite[Proposition 3.2]{bovierhartung2017}. This concludes the proof of Part (ii) of Theorem \ref{thm1}.
\end{proof}

%%%%%%%%%%%%%%%%%%%%%%%%%%%%%%%%%%%%%%

\section{Proof of Theorem \ref{thm2}: the mean overlap is smaller than the REM's}\label{ovsmaller}

%%%%%%%%%%%%%%%%%%%%%%%%%%%%%%%%%%%%%%

The aim of this section is to prove Theorem \ref{thm2}. In Subsection \ref{func}, we notice that it is sufficient to prove that a certain functional of the decoration can take arbitrary small values. Then, in Subsection \ref{lapmet}, we use the description of the decoration process obtained in \cite{abbs2013} to study the support of this functional.

%%%%%%%%%%%%%%%%%%%
\subsection{A functional of the decoration}
\label{func}
%%%%%%%%%%%%%%%%%%%

Let $\mathcal C\de\sum_{j\geq0}\delta_{\Delta_j}$ be a point process distributed as the decoration process arising in (\ref{cv}). We assume that its atoms are ranked in non-increasing order, so that $\Delta_0=0$ and $\Delta_j\leq 0$ for all $j\geq1$. Denote, for $\beta>\beta_c$, $X_\beta \de\frac{1}{\beta}\log \sum_{j\geq0} \e ^{\beta \Delta_j}$, which is well defined a.s. thanks to the following result.
\begin{lem}
	For every $\beta>\beta_c$, we have $\es[\sum_{j\geq0} \e ^{\beta \Delta_j}]<\infty.$
\end{lem}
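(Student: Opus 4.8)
The plan is to estimate $\es\big[\sum_{j\geq 0}\e^{\beta\Delta_j}\big]$ by relating the decoration point process $\mathcal C$ to the convergence of the extremal process $\mathcal E_t$, since the decoration arises as the local structure of $\mathcal E$ near one of its points $p_i$. Concretely, I would use the known relation (see \cite{abbs2013,abk2013}) that allows one to compute expectations of functionals of the decoration in terms of the extremal process of the BBM seen from its tip, or equivalently, to compute $\es[\sum_j F(\Delta_j)]$ via a limit of $\es[\sum_{u\in\nt}F(\tilde x_u(t)-\tilde x(t))\,\big|\,\tilde x(t)]$-type quantities. For the specific test function $F(x)=\e^{\beta x}$ with $\beta>\beta_c=\sqrt 2$, this is integrable at $-\infty$ against the exponentially decaying tail of the decoration, so the only thing to control is the number and depth of the decoration atoms.

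First I would recall from \eqref{cv} that $\mathcal E_t\cvd \sum_{i,j}\delta_{p_i+\Delta^i_j}$ with $(p_i)$ a Cox process of intensity $CZ\e^{-\sqrt2 x}\dx$, and note that for a bounded continuous compactly supported $f$ one has $\es[\mathcal E_t(f)]\to \es[\mathcal E(f)]$ along with the identity $\es[\mathcal E(f)]=CZ_{\mathrm{mean}}\int_\re \e^{-\sqrt2 x}\,\es\big[\sum_j f(x+\Delta_j)\big]\dx$, where $Z_{\mathrm{mean}}=\es[Z]$ (which is finite, indeed the derivative martingale has $\es[Z]<\infty$ in the appropriate normalization, or one works conditionally). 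Choosing $f$ to approximate $\ind_{[-A,A]}\e^{\beta\,\cdot}$ and using the first-moment (many-to-one) estimate on $\es[\mathcal E_t(\ind_{[-A,A]}\e^{\beta\cdot})]=\es[\sum_{u\in\nt}\e^{\beta\tilde x_u(t)}\ind_{\tilde x_u(t)\in[-A,A]}]$, which is classically $O(1)$ uniformly in $t$ for $\beta>\sqrt 2$ (this is exactly the computation behind the convergence of the partition function above the critical temperature, cf.\ \cite{derridaspohn88}), I get a uniform bound $\es[\mathcal E_t(\ind_{[-A,A]}\e^{\beta\cdot})]\leq c\,\e^{(\beta-\sqrt2)A}$ on the truncated side near $+\infty$ and a bound of the form $\int_{-\infty}^{A}\e^{-\sqrt2 x}\e^{\beta x}\dx<\infty$ controlling the contribution of low atoms after unshifting. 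Passing to the limit and using Fatou, $\es\big[\sum_j \e^{\beta\Delta_j}\big]$ inherits a finite bound.

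The cleanest route, and the one I would actually write, is to avoid recomputing moment asymptotics and instead invoke Proposition \ref{pro1} together with the structure already established: the measure $\rho_{\beta,t}=\sum_{u\in\nt}\e^{\beta\tilde x_u(t)}\delta_{\gtu}$ has $\rho_{\beta,t}(\re_+)\xrightarrow{(\mathrm d)}\rho_\beta(\re_+)$ by Proposition \ref{mass}, and its mass decomposes by decoration clusters in the limit as $\rho_\beta(\re_+)=\sum_i \e^{\beta p_i}\sum_j\e^{\beta\Delta^i_j}$. Since $\es[\rho_{\beta,t}(\re_+)]=\es[Z_{\beta,t}\e^{-\beta m_t}]=\e^{-\beta m_t}\es[Z_{\beta,t}]$ is uniformly bounded in $t$ for $\beta>\sqrt2$ (a direct many-to-one computation: $\es[Z_{\beta,t}]=\e^{t(1+\beta^2/2)}$ and $m_t=\sqrt2 t-\frac{3}{2\sqrt2}\log t$, so $\e^{-\beta m_t}\es[Z_{\beta,t}]=t^{3\beta/(2\sqrt2)}\e^{-t(\beta-\sqrt2)^2/2}\to 0$, hence bounded), Fatou's lemma gives $\es[\rho_\beta(\re_+)]<\infty$. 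Conditioning on the Cox structure and using that $p_0$ (the largest atom) has a nondegenerate law, we isolate a single cluster: $\es[\e^{\beta p_0}]\,\es[\sum_j\e^{\beta\Delta_j}]\leq \es[\rho_\beta(\re_+)]<\infty$, and since $\es[\e^{\beta p_0}]>0$ we conclude $\es[\sum_{j\geq0}\e^{\beta\Delta_j}]<\infty$.

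The main obstacle is making the last disintegration rigorous: one must justify that the decoration processes $(\Delta^i)$ are independent of the shifts $(p_i)$ (which is part of the structure theorem in \cite{abbs2013,abk2013}) and that one may factor the expectation of $\sum_i\e^{\beta p_i}\sum_j\e^{\beta\Delta^i_j}$ over a single cluster — this requires care because there are infinitely many clusters and the $p_i$ range over all of $\re$, so the sum $\sum_i\e^{\beta p_i}$ itself is an $(\beta_c/\beta)$-stable-type sum whose finiteness of \emph{expectation} is not automatic; the correct statement is that $\es[\rho_\beta(\re_+)]<\infty$ comes from the uniform $L^1$ bound via Fatou, and then since each summand $\e^{\beta p_i}\sum_j\e^{\beta\Delta^i_j}$ is nonnegative and the $\Delta^i$ are i.i.d.\ and independent of $(p_i)$, Tonelli gives $\es[\rho_\beta(\re_+)]=\es\big[\sum_i\e^{\beta p_i}\big]\cdot\es\big[\sum_j\e^{\beta\Delta_j}\big]$ \emph{provided} $\es[\sum_i\e^{\beta p_i}]$ is itself positive and finite — if it is infinite the factorization is vacuous, so one instead argues directly that finiteness of the product of nonnegative quantities forces a.s.\ finiteness of $\sum_j\e^{\beta\Delta_j}$, which is all the lemma claims after upgrading to $L^1$ by a separate many-to-one bound on a single decoration. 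I would present the many-to-one bound explicitly as the backbone, since it is short and self-contained, and relegate the cluster-decomposition heuristic to a remark.
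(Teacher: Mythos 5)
Your proposal does not go through, and the failure is at the heart of the argument rather than in a technical detail. The central claim that $\es[\rho_{\beta,t}(\re_+)]$ is uniformly bounded in $t$ for $\beta>\beta_c$ is false, and the error is a sign mistake in your own computation. You correctly write $\es[Z_{\beta,t}]=\e^{t(1+\beta^2/2)}$ and $m_t=\sqrt2\,t-\frac{3}{2\sqrt2}\log t$, but then
\[
\e^{-\beta m_t}\es[Z_{\beta,t}]
= t^{3\beta/(2\sqrt2)}\,\e^{\,t\bigl(1+\beta^2/2-\sqrt2\,\beta\bigr)}
= t^{3\beta/(2\sqrt2)}\,\e^{\,+\,t(\beta-\sqrt2)^2/2},
\]
which diverges to $+\infty$ for $\beta>\sqrt2$, not to $0$. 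This is the well-known breakdown of the first-moment method in the low-temperature regime: the partition function converges in distribution after centering, but its mean is blown up by atypical events. Consequently the Fatou step produces no bound, and no conclusion on $\es[\sum_j\e^{\beta\Delta_j}]$ follows. Your fallback of isolating a single cluster fails for the same reason: conditionally on $Z$, $\es\bigl[\sum_i\e^{\beta p_i}\,\big|\,Z\bigr]=CZ\int_\re\e^{(\beta-\sqrt2)x}\dx=\infty$, so the Tonelli factorization only yields $\infty=\infty\cdot(\text{something})$, as you half-observe at the end. You then invoke an unspecified "separate many-to-one bound on a single decoration," but this is precisely the missing content, and the lemma is exactly the assertion that such a bound exists; nothing has been proved. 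A smaller but real error: you assert $\es[Z]<\infty$ for the derivative martingale, whereas in fact $\es[Z]=\infty$ (the martingale $Z_t$ has mean $0$ and converges a.s.\ to a positive limit, hence cannot be uniformly integrable).

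The paper's proof avoids all of this by working directly with the decoration's intensity rather than with the partition function. Writing $\sum_{j\geq0}\e^{\beta\Delta_j}=\beta\int_0^\infty\mathcal C([-s,0])\,\e^{-\beta s}\ds$ and applying Fubini--Tonelli reduces the lemma to the statement $\int_0^\infty\es[\mathcal C([-s,0])]\,\e^{-\beta s}\ds<\infty$, and the asymptotic $\es[\mathcal C([-s,0])]\sim C_\star\e^{\beta_c s}$ from Proposition 1.5 of \cite{cortineshartunglouidor2019} closes the argument since $\beta>\beta_c$. The key input is a first-moment estimate on the \emph{decoration} point process, not on $Z_{\beta,t}$, and that is a genuinely different (and much more delicate) object whose intensity grows like $\e^{\beta_c s}$ rather than $\e^{\sqrt2 s}(\text{polynomial})$ corrections that would be needed to make a partition-function argument work. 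If you want to salvage your route, you would need precisely this decoration intensity bound, at which point you may as well take the paper's direct path.
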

\begin{proof}
	Observe that the following integral representation holds:
	$$\sum_{j\geq0} \e ^{\beta \Delta_j} = \beta \int_{0}^{\infty} \mathcal C([-s,0])\e^{-\beta s}\mathrm ds. $$
	And Fubini-Tonelli theorem gives
	$$\es\Big[\sum_{j\geq0} \e ^{\beta \Delta_j}\Big] = \beta \int_{0}^{\infty} \es\big[\mathcal C([-s,0])\big]\e^{-\beta s}\mathrm ds. $$
	Proposition 1.5 in \cite{cortineshartunglouidor2019} gives the asymptotic  $\es\big[\mathcal C([-s,0])\big]\sim C_{\star}\e^{\beta_c s}$ as $s\rightarrow\infty$, for some $C_\star>0$, concluding the proof.
\end{proof}
\noindent
Recall the expression of the overlap from (\ref{overlap1})
\begin{equation}\label{overlap} 
\q\de\dfrac{\sum_{i}\big(\sum_j \e^{\beta(p_i+\Delta^i_j)}\big)\big(\sum_j \e^{\beta'(p_i+\Delta^i_j)}\big)}{\big(\sum_{i,j} \e^{\beta(p_i+\Delta^i_j)}\big)\big(\sum_{i,j} \e^{\beta'(p_i+\Delta^i_j)}\big)}, \qquad \beta,\beta'>\beta_c.
\end{equation}
Introducing $X_{\beta,i}\de\frac{1}{\beta}\log \sum_{j\geq0} \e ^{\beta \Delta^i_j}$ and $\xi_i=p_i-\frac{1}{\sqrt 2}\log(CZ)$ which form a PPP($\e^{-\sqrt 2x}\dx$), see Remark \ref{alternative_exp}, the overlap can be rewritten in the following manner:
\[
\q =\dfrac{\sum_i \e^{\beta(\xi_i+X_{\beta,i})} \e^{\beta'(\xi_i+X_{\beta',i})}}{\big(\sum_i \e^{\beta(\xi_i+X_{\beta,i})}\big)\big(\sum_i \e^{\beta'(\xi_i+X_{\beta',i})}\big)}.
\]
Then, Lemma 2.1 in \cite{panchenkotalagrand2007-1} shows that the point process $(\xi_i+X_{\beta,i},\xi_i+X_{\beta',i})_{i}$ has the same distribution as $(\xi_i+c_\beta,\xi_i+c_\beta+Y_i)_{i}$ where $(Y_i)_{i\geq1}$ are i.i.d. and independent of $(\xi_i)_{i\geq1}$, and $c_\beta\de\beta_c^{-1}\log\es[\e^{\beta_c X_\beta}]$, so that
\[
\q \sd\dfrac{\sum_i \e^{\beta\xi_i} \e^{\beta'(\xi_i+Y_i)}}{\big(\sum_i \e^{\beta\xi_i}\big)\big(\sum_i \e^{\beta'(\xi_i+Y_i)}\big)}.
\]
In order to prove that $\es[\q]<\es[\qr]$ when $\beta\neq\beta'$, we stick to the strategy of Pain and Zindy in \cite[Section 3]{PainZindy21} with the following lemma which shows that the $Y_i$ play a negative role in the expected value of the overlap.

\begin{lem}[Pain and Zindy \cite{PainZindy21}] \label{lemPZ}
	Let $(p_n)_{n\geq 1}$ and $(q_n)_{n\geq 1}$ be nonincreasing deterministic sequences of nonnegative real numbers such that $\sum_{n\geq 1} p_n = 1$.
	Let $(A_n)_{n\geq 1}$ be a sequence of i.i.d.\@ positive random variables.
	We set 
	\[
	\tilde{p}_n \coloneqq \frac{A_n p_n}{\sum_{k\geq 1} A_k p_k}, 
	\quad \forall n\geq 1.
	\]
	Then, we have 
	\begin{equation}
	\es\Big[{\sum_{n\geq 1} \tilde{p}_n q_n}\Big] 
	\leq \sum_{n\geq 1} p_n q_n. \label{ineq}
	\end{equation}
	Moreover, if $A_1$ is not almost surely constant, $(q_n)_{n\geq 1}$ is not constant and, for any $n \geq 1$, $p_n > 0$, then the inequality in \eqref{ineq} is strict.
\end{lem}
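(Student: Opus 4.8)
Below is how I would attack Lemma~\ref{lemPZ}.

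The plan is to eliminate the randomness by symmetrising over the i.i.d.\ (hence exchangeable) family $(A_n)_{n\ge1}$, reducing the statement to a deterministic rearrangement inequality, which is then established through a monotonicity property of a permanent. First one reduces to finitely many atoms: since $q$ is nonincreasing, $q_n\le q_1<\infty$, hence $\sum_n p_nq_n<\infty$; atoms with $p_n=0$ contribute nothing on either side, so we may assume $p_n>0$ for all $n$. Truncating at the first $N$ atoms and renormalising (set $p_k^{(N)}\de p_k/\sum_{j\le N}p_j$ for $k\le N$), the finite version of the claim reads $\es\big[\sum_{k\le N}\tilde p_k^{(N)}q_k\big]\le\sum_{k\le N}p_k^{(N)}q_k$; since $\sum_{k\le N}\tilde p_k^{(N)}q_k=\big(\sum_{k\le N}A_kp_kq_k\big)/\big(\sum_{j\le N}A_jp_j\big)$ is bounded by $q_1$ and converges a.s.\ to $\sum_n\tilde p_nq_n$ (note $\sum_jA_jp_j\ge A_1p_1>0$ a.s.), while $\sum_{k\le N}p_k^{(N)}q_k\to\sum_np_nq_n$, letting $N\to\infty$ by dominated convergence recovers the general statement.

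So fix $N$ atoms with $p_1\ge\cdots\ge p_N>0$ and $\sum_{k=1}^Np_k=1$. Since $(A_{\tau(1)},\dots,A_{\tau(N)})\sd(A_1,\dots,A_N)$ for each $\tau$ in the symmetric group $S_N$, we get $\es\big[\sum_k\tilde p_kq_k\big]=\es\big[\tfrac1{N!}\sum_{\tau\in S_N}\tfrac{\sum_kq_kp_kA_{\tau(k)}}{\sum_kp_kA_{\tau(k)}}\big]$, so it is enough to prove that, for all $a_1,\dots,a_N>0$,
\[
\frac1{N!}\sum_{\tau\in S_N}\frac{\sum_kq_kp_ka_{\tau(k)}}{\sum_kp_ka_{\tau(k)}}\ \le\ \sum_kp_kq_k .
\]
Put $\bar q\de\sum_mp_mq_m$ and $c_k\de(q_k-\bar q)p_k$, so that $\sum_kc_k=0$; subtracting $\bar q$ and using $1/x=\int_0^\infty\e^{-tx}\dt$, the inequality is equivalent to $\int_0^\infty\big(\sum_{\tau\in S_N}(\sum_kc_ka_{\tau(k)})\e^{-t\sum_kp_ka_{\tau(k)}}\big)\dt\le0$. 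Introducing $F(p)\de\sum_{\tau\in S_N}\prod_k\e^{-tp_ka_{\tau(k)}}$ (for fixed $t>0$ this is the permanent of the matrix $(\e^{-tp_ka_\ell})_{k,\ell}$) and differentiating, the integrand equals $-t^{-1}\langle c,\nabla F(p)\rangle$; so it suffices to show $\langle c,\nabla F(p)\rangle\ge0$ for every $t>0$.

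Two facts give this, and the first is where I expect the real difficulty to lie. (a) If $p_i\ge p_j$ then $\partial_{p_i}F(p)\ge\partial_{p_j}F(p)$ (this is the Schur--Ostrowski criterion: $F$ is Schur-convex in $p$). Indeed, grouping the permutations into the pairs $\{\tau,\tau\circ(i\,j)\}$, the joint contribution of such a pair to $\partial_{p_i}F-\partial_{p_j}F$ is $-t\,(a_u-a_v)\big(1-\e^{\,t(p_i-p_j)(a_u-a_v)}\big)\,P_\tau$, where $u=\tau(i)$, $v=\tau(j)$ and $P_\tau\de\prod_k\e^{-tp_ka_{\tau(k)}}>0$, and this is $\ge0$ because $x\,(1-\e^{\lambda x})\le0$ whenever $\lambda\ge0$. (b) With $s_k\de\sum_{j\le k}c_j$ one has $c=\sum_{k=1}^{N-1}s_k(e_k-e_{k+1})$ and $s_k=\big(\sum_{j\le k}p_j\big)(\bar q_k-\bar q)\ge0$, where $\bar q_k\de\big(\sum_{j\le k}p_jq_j\big)/\big(\sum_{j\le k}p_j\big)$ is the $p$-weighted mean of $q$ over $\{1,\dots,k\}$ and dominates the global mean $\bar q$ because $q$ is nonincreasing. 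Combining (a) and (b), $\langle c,\nabla F(p)\rangle=\sum_{k=1}^{N-1}s_k\big(\partial_{p_k}F-\partial_{p_{k+1}}F\big)\ge0$, each summand being nonnegative since $p_k\ge p_{k+1}$. This proves the deterministic inequality, hence the lemma's inequality.

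For the strict inequality under the extra hypotheses I would track the equality cases above. A short check shows that $q$ being non-constant forces $\bar q_k>\bar q$ strictly for every $k$ (using again that $q$ is nonincreasing and all $p_n>0$), while $p_J>p_{J+1}$ holds for at least one index $J$ (otherwise $p$ would be constant, contradicting $\sum_np_n=1$ with infinitely many positive terms). For that $J$ the term $s_J\big(\partial_{p_J}F-\partial_{p_{J+1}}F\big)$ is strictly positive as soon as $t>0$ and the $a_k$ are not all equal, since then the pair-contribution in (a) is strict; hence $\langle c,\nabla F(p)\rangle>0$ on the positive-probability event that the $A_k$ are not all equal (recall $A_1$ is not a.s.\ constant). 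This yields strictness in the finite case, and a truncation argument transfers it to the general setting.
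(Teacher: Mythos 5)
Your argument for the non-strict inequality is correct, though it is considerably more elaborate than necessary. The symmetrisation over $S_N$ is the right first move, but once one has reduced (as you do) to bounding
\[
\frac1{N!}\sum_{\tau\in S_N}\frac{\sum_k q_kp_ka_{\tau(k)}}{\sum_kp_ka_{\tau(k)}},
\]
there is a much shorter route than Schur--Ostrowski applied to the permanent $F$: pairing each $\tau$ with $\tau\circ(n\,m)$ and averaging over all transpositions, or equivalently writing
\[
\sum_kp_kq_k-\frac{\sum_kA_kp_kq_k}{\sum_kA_kp_k}
=-\frac{1}{2\sum_kA_kp_k}\sum_{n,m}p_np_m(q_n-q_m)(A_n-A_m),
\]
taking expectations, and then using a single exchangeability step (swap $A_n\leftrightarrow A_m$) to turn $\es[(A_n-A_m)/S]$ into $-\tfrac{1}{2}(p_n-p_m)\es[(A_n-A_m)^2/(SS')]$. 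This gives
\[
\sum_np_nq_n-\es\Big[\sum_n\tilde p_nq_n\Big]
=\frac14\sum_{n,m}p_np_m(q_n-q_m)(p_n-p_m)\,\es\Big[\frac{(A_n-A_m)^2}{SS'}\Big]\ge0,
\]
where every summand is nonnegative since $p$ and $q$ are both nonincreasing. Your Schur-convexity computation for $F$ is a correct but roundabout way of encoding exactly this pairwise positivity. Note also that the paper cites this lemma from Pain--Zindy and does not itself give a proof, so there is no internal argument to compare with.

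The real problem is your treatment of the strict inequality. You prove $\es[Z_N]<R_N$ for each finite truncation (with $Z_N\de\sum_{k\le N}\tilde p_k^{(N)}q_k$, $R_N\de\sum_{k\le N}p_k^{(N)}q_k$), and then assert that ``a truncation argument transfers it to the general setting.'' It does not: you only know $Z_N\to Z$ in $L^1$ and $R_N\to R$, hence $R_N-\es[Z_N]\to R-\es[Z]$, and a sequence of strictly positive numbers can perfectly well converge to $0$. Strictness is not preserved under limits, and you give no uniform lower bound on the gap $R_N-\es[Z_N]$. As written, this step is a genuine hole. The clean fix is to avoid truncation altogether and prove strictness directly on the infinite sum: the identity displayed above is an absolutely convergent double series (Fubini applies because $\sum_{n,m}p_np_m\es[(A_n+A_m)/S]=2$), and under the extra hypotheses one readily exhibits a pair $(n,m)$ with $p_np_m(q_n-q_m)(p_n-p_m)>0$ and $\es[(A_n-A_m)^2/(SS')]>0$, giving a strictly positive contribution and hence $\es[Q(\beta,\beta')]$ strictly below the REM value. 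If you insist on staying inside your permanent framework, you would need to redo the Schur argument on the full infinite sum (symmetrising only over $S_N\subset S_\infty$ and keeping the tail $\sum_{n>N}c_na_n$ explicitly in the integrand), which is possible but requires additional care because $\sum_{k\le N}c_k\neq0$ and the summation-by-parts picks up a boundary term $s_N\partial_{p_N}F$ of the wrong sign that must be cancelled against the tail.
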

\noindent
Assuming the $(\xi_k)$ are ranked in decreasing order and using the previous lemma, conditionally on $(\xi_k)$, with 
\[
p_n \coloneqq 
\frac{\e^{\beta' \xi_n}}{\sum_{k\geq 1} \e^{\beta' \xi_k}},
\quad
q_n \coloneqq 
\frac{\e^{\beta \xi_n}}{\sum_{k\geq 1} \e^{\beta \xi_k}},
\quad \text{and} \quad
A_n \coloneqq
\e^{\beta' Y_n},
\]
shows that $\es[\q]<\es[\qr]$ on the condition that $Y_1$ is not almost surely constant.\\
The final step is to prove that $Y_1$ is not constant almost surely. Assume that it is not the case, then $X_\beta - X_{\beta'}$ is also constant almost surely and therefore there exists $c>0$ such that, almost surely
\begin{equation}\label{lpeq}
\Big(\sum_{j\geq0} \e ^{\beta \Delta_j}\Big)^{\frac{1}{\beta}}=c\,\Big(\sum_{j\geq0} \e ^{\beta' \Delta_j}\Big)^{\frac{1}{\beta'}}.
\end{equation}
\noindent
If we can prove that $\sum_{j\geq 1} \e ^{\beta \Delta_j}$ can be arbitrary small with positive probability, the following lemma concludes the proof of Theorem \ref{thm2} by showing that (\ref{lpeq}) is impossible.
\begin{lem}
	Let $\alpha>1$, $c>0$ and $(a_{k,j})_{k,j\geq 1}$ be non-negative real numbers such that $\sum_{j\geq 1} a_{k,j}<\infty$. Assume that, for all $k\geq 1$,
	$$ 1+\sum_{j\geq 1} a_{k,j}=c\,\Big(1+\sum_{j\geq 1} a_{k,j}^{\alpha}\Big)^{\frac{1}{\alpha}} \qquad and \qquad
	\sum_{j\geq 1} a_{k,j} \underset{k\rightarrow \infty}{\longrightarrow} 0.$$
	Then $c=1$ and all the $a_{k,j}$ are equal to 0.
\end{lem}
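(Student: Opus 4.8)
The plan is to prove the lemma by a direct analytic argument, treating each index $k$ separately and then passing to the limit. First I would fix $k$ and study the single-variable constraint it imposes. Write $s_k \de \sum_{j\geq 1} a_{k,j}$ and $t_k \de \sum_{j\geq 1} a_{k,j}^\alpha$. The hypothesis $1 + s_k = c(1 + t_k)^{1/\alpha}$ together with $s_k \to 0$ forces, after raising to the power $\alpha$, that $(1+s_k)^\alpha = c^\alpha(1+t_k)$. Since $\alpha > 1$ and $a_{k,j} \in [0, s_k]$ with $s_k \to 0$, we have $t_k = \sum_j a_{k,j}^\alpha \leq s_k^{\alpha-1} \sum_j a_{k,j} = s_k^{\alpha} \cdot s_k^{-1} \cdot s_k = s_k^{\alpha-1}\, s_k$ — more cleanly, $t_k \leq s_k^{\alpha - 1} s_k \to 0$, so in fact $t_k \to 0$ as $k \to \infty$. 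Letting $k \to \infty$ in $(1+s_k)^\alpha = c^\alpha(1+t_k)$ gives $1 = c^\alpha$, hence $c = 1$ (since $c > 0$).

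Now that $c = 1$, the constraint for every fixed $k$ becomes $(1 + s_k)^\alpha = 1 + t_k$, i.e.
\[
\Big(1 + \sum_{j\geq 1} a_{k,j}\Big)^\alpha = 1 + \sum_{j\geq 1} a_{k,j}^\alpha .
\]
Here I would invoke strict superadditivity of the map $x \mapsto x^\alpha$ on $[0,\infty)$ for $\alpha > 1$: for nonnegative reals $b_0, b_1, b_2, \dots$ (here $b_0 = 1$, $b_j = a_{k,j}$) one has $\big(\sum_j b_j\big)^\alpha \geq \sum_j b_j^\alpha$, with equality if and only if at most one $b_j$ is nonzero. Since $b_0 = 1 \neq 0$, equality forces $a_{k,j} = 0$ for all $j \geq 1$. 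As this holds for every $k$, all the $a_{k,j}$ vanish, completing the proof.

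The only point that needs a little care is the superadditivity statement for a (possibly infinite) series rather than a finite sum: I would first prove it for finite sums by induction on the number of terms — the two-term case $(x+y)^\alpha \geq x^\alpha + y^\alpha$ follows from monotonicity of $x \mapsto x^\alpha$ applied to $x \leq x+y$ and $y \leq x+y$, with strictness unless $x = 0$ or $y = 0$ — and then pass to the infinite case by monotone convergence, noting that the partial sums $\sum_{j=1}^N a_{k,j}$ increase to $s_k < \infty$ and that $x\mapsto x^\alpha$ is continuous. I expect the main (very mild) obstacle to be simply bookkeeping the strictness of the inequality through this limiting procedure; everything else is elementary. An alternative to the superadditivity argument, if one prefers, is to differentiate or to note directly that $t_k \leq s_k^{\alpha-1} s_k < s_k$ whenever $s_k \in (0,1)$ while $(1+s_k)^\alpha > 1 + \alpha s_k > 1 + s_k \geq 1 + t_k$ for $s_k > 0$, which already contradicts $(1+s_k)^\alpha = 1 + t_k$ and hence forces $s_k = 0$ for all large $k$; but to kill all $k$ (not just large ones) one still wants the superadditivity bound, since $s_k$ need not be small for small $k$.
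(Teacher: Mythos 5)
Your proof is correct and takes essentially the same route as the paper's: both take the limit $k\to\infty$ to obtain $c=1$ (you bound $t_k\leq s_k^{\alpha-1}s_k\to 0$, the paper uses the $\ell^p$-norm inequality $|a_k|_\alpha\leq |a_k|_1\to 0$), and both then conclude from the equality-case of the $\ell^1$-vs-$\ell^\alpha$ comparison, which the paper phrases as ``$|x|_p=|x|_{p'}$ forces $x$ to have at most one nonzero entry'' and you phrase as strict superadditivity of $x\mapsto x^\alpha$.
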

\begin{proof}
	For a non-negative sequence $x=(x_j)_j$, let us denote $|x|_p=(\sum_{j\geq 1} x_j ^p)^{1/p}$ for $p\geq1$ when the sum is finite. The assumption $|a_k|_1\longrightarrow 0$, when $k\rightarrow \infty$, implies that $ |a_{k}|_{\alpha}\longrightarrow 0$, when $k\rightarrow \infty$.
	Taking the limit $k$ tends to infinity in the equality yields $c=1$. The second assertion is a consequence of the fact: $|x|_p=|x|_{p'}$ for $p \neq p' \Rightarrow x$ is zero except perhaps on one point.
\end{proof}	
Next section is devoted to the last result we need for the proof of Theorem \ref{thm2}.
\begin{prop}\label{sup}
	Let $\beta>\beta_c$, then $0$ is in the support of the law of $\sum_{j\geq 1}\e^{\beta\Delta_j}$.
\end{prop}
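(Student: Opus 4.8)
The plan is to exploit the spine description of the decoration point process obtained in \cite{abbs2013}. In that description, $\mathcal C$ is built from a ``spine'', i.e.\ the trajectory of the tip of the decoration traced backwards in time: along this spine, branchings occur at the points $0<s_1<s_2<\cdots$ of a Poisson point process of constant rate, and at each $s_k$ one grafts an independent branching Brownian motion run for a time $s_k$; the atoms of $\mathcal C$ other than $\Delta_0=0$ are then the positions, relative to the tip, of all the particles produced by these grafted trees. The branch grafted at backward-time $s$ starts a distance of order $\sqrt2\,s$ below the tip and runs for time $s$, so its highest particle typically sits at distance of order $\log s$ below the tip; hence, informally, only branchings close to the tip contribute atoms of $\mathcal C$ close to $0$, and this is the feature I intend to use.

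Fix $\varepsilon>0$. First I would introduce the event $F_T \de \{s_1>T\}$ that no branching of the spine occurs within backward-time $T$ of the tip; since the $(s_k)$ form a Poisson point process of constant rate $c$, one has $\p(F_T)=\e^{-cT}>0$ for every $T>0$. Next I would decompose $\sum_{j\geq 1}\e^{\beta\Delta_j}$ according to the graft each atom belongs to: writing $W_{<T}$, resp.\ $W_{\geq T}$, for the $\beta$-weight of the particles produced by the trees grafted at times $s_k<T$, resp.\ $s_k\geq T$, one gets $\sum_{j\geq 1}\e^{\beta\Delta_j}=W_{<T}+W_{\geq T}$, and on $F_T$ one has simply $\sum_{j\geq 1}\e^{\beta\Delta_j}=W_{\geq T}$. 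Because the Poisson point process of branching times has independent increments and the grafted trees carry fresh randomness, $F_T$ is independent of $W_{\geq T}$, so that
\[
\es\Big[\sum_{j\geq 1}\e^{\beta\Delta_j}\,\Big|\,F_T\Big]=\es[W_{\geq T}]=\es\Big[\sum_{j\geq 1}\e^{\beta\Delta_j}\Big]-\es[W_{<T}].
\]
By the lemma proved above, $\es[\sum_{j\geq 1}\e^{\beta\Delta_j}]<\infty$, while $\es[W_{<T}]$ increases to this quantity as $T\to\infty$ by monotone convergence; therefore $\es[\sum_{j\geq 1}\e^{\beta\Delta_j}\mid F_T]\to 0$ as $T\to\infty$.

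I would then choose $T$ large enough that this conditional expectation is $<\varepsilon/2$. By Markov's inequality, $\p(\sum_{j\geq 1}\e^{\beta\Delta_j}<\varepsilon\mid F_T)>\tfrac12$, whence
\[
\p\Big(\sum_{j\geq 1}\e^{\beta\Delta_j}<\varepsilon\Big)\;\geq\;\tfrac12\,\p(F_T)\;=\;\tfrac12\,\e^{-cT}\;>\;0,
\]
which is exactly the statement that $0$ lies in the support of the law of $\sum_{j\geq 1}\e^{\beta\Delta_j}$.

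The step I expect to be the main obstacle is the first one, namely turning the heuristic spine picture into rigorous statements within the formalism of \cite{abbs2013}, which, as the author stresses, is much less explicit than the description of the DGFF decoration. Concretely, one must locate inside that construction the analogue of $F_T$, check that it has positive probability and is independent of the ``post-$T$'' part of the decoration, justify that every atom of $\mathcal C$ below the tip belongs to a well-defined grafted tree so that the decomposition $\sum_{j\geq 1}\e^{\beta\Delta_j}=W_{<T}+W_{\geq T}$ makes sense, and verify that $\es[W_{<T}]\uparrow\es[\sum_{j\geq 1}\e^{\beta\Delta_j}]$ — the last point requiring one to keep track of any conditioning imposed on the grafted trees. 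These are the ``technical estimates'', based on the precise description of the decoration, to which the next subsection is devoted.
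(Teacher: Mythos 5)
Your approach is genuinely different from the paper's and, once the conditioning is handled correctly, considerably simpler. The paper applies a Laplace-transform criterion (Lemma \ref{support1}) conditionally on the backward path $Y$, which forces it to estimate $\es[(f_Y(t)C_t)\wedge 1\,|\,Y]$; to do so it first replaces the conditioned BBMs $C_t$ by unconditioned ones $D_t$, bounds the truncated expectation with Markov's inequality, a Gaussian tail estimate and the many-to-one lemma, integrates using the asymptotics $Y(t)\asymp t^{1/2\pm\varepsilon}$ from the Bessel-process analysis in Appendix \ref{Bessel}, and finally runs a coupling between $R_\beta$ and $S_\beta$ controlled by Bramson's estimate. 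Your argument bypasses all of this: it needs only the structure of the spine representation (a Poisson process $\pi$ along $Y$ with fresh, conditionally independent trees grafted at each atom), the already-established finiteness $\es[\sum_{j\geq 0}\e^{\beta\Delta_j}]<\infty$, monotone convergence, and Markov's inequality — no Gaussian estimates, no Bessel asymptotics, no coupling. This is a real gain in both length and conceptual clarity.

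Two points in the write-up need repair, though neither is fatal. First, the spine branching times are \emph{not} a constant-rate Poisson process: conditionally on $Y$, the intensity is $\mu_Y(\dt)=2(1-G_t(-Y(t)))\,\dt$, which is random and time-dependent, and the grafted BBMs are \emph{conditioned} (to stay positive at the relevant time). Second, and more importantly, your identity $\es[\sum_{j\geq 1}\e^{\beta\Delta_j}\mid F_T]=\es[W_{\geq T}]$ is not correct as stated, because $F_T=\{\pi\cap[0,T]=\varnothing\}$ and $W_{\geq T}$ are \emph{not} unconditionally independent — both are correlated through $Y$. The independence holds only conditionally on $Y$. The fix is immediate: since $\mu_Y(\dt)\leq 2\,\dt$ one has $\p(F_T\mid Y)\geq \e^{-2T}>0$ uniformly, and by conditional independence
\[
\p\Big(\sum_{j\geq 1}\e^{\beta\Delta_j}<\varepsilon\Big)\geq \es\big[\p(F_T\mid Y)\,\p(W_{\geq T}<\varepsilon\mid Y)\big]\geq \e^{-2T}\,\p(W_{\geq T}<\varepsilon)\geq \e^{-2T}\Big(1-\frac{\es[W_{\geq T}]}{\varepsilon}\Big),
\]
with $\es[W_{\geq T}]\to 0$ by monotone convergence, exactly as you intended. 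You correctly flagged that the rigorization of the spine picture is the crux; the specific thing to watch is that the independence you invoke must be taken conditionally on $Y$, and that the grafted trees carrying the conditioning do not break the decomposition $W_{<T}+W_{\geq T}=\sum_{j\geq 1}\e^{\beta\Delta_j}$ — they do not, since each atom of $\mathcal Q\setminus\{0\}$ lies in exactly one grafted cloud.
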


%%%%%%%%%%%%%%%%%%%
\subsection{Laplace transform along the backward path $Y$}
\label{lapmet}
%%%%%%%%%%%%%%%%%%%

In order to prove Proposition \ref{sup}, we will use the description of the decoration process $\mathcal Q$ obtained in \cite{abbs2013}. It is obtained with conditioned branching Brownian motions issued from a certain path $Y$, the \textit{backward path}, which we will explicit now. We refer to the article for more details.
The authors adopt a different normalization in their article: each of the particles follows a Brownian motion with drift $2$ and variance $\sigma^2\de 2$, which is equivalent to consider $X_u(t)\de\sqrt2(\sqrt2t-x_u(t))$ for $u\in\nt$ in our settings. And instead of looking at the highest particles, they study the extremal process seen from the lowest or leftmost particle $X(t)\de\min_{u\in\nt} X_u(t)=\sqrt 2(\sqrt 2t-x(t))$. \\

For $b>0$, define the process $\Gamma^{(b)}$ by

$$
\Gamma^{(b)}_s \de \left\{
\begin{array}{ll}
B_{s}, & \mbox{if } s \in [0,T_b], \\
b-R_{s-T_b}, & \mbox{if } s\geq T_b,
\end{array}
\right.
$$
\noindent
where $B$ is a standard Brownian motion, $T_b\de\inf\{t\geq 0 : B_t=b\}$ and $R$ is a three-dimensional Bessel process started from $0$ and independent from $B$. If $A$ is a measurable set of $C(\mathbb R_+,\mathbb R)$ and $b> 0$, we consider $Y$ whose law is given by
\begin{equation}\label{dens1}
\mathbb P(Y\in A,-\inf_{s\geq 0}Y(s)\in \mathrm db)=\frac{1}{c_1}\es\Big[\e^{-2\int_{0}^{\infty}G_v(\sigma \Gamma^{(b)}_v)\mathrm dv}\ind_{-\sigma\Gamma^{(b)}\in A}\Big],
\end{equation}
where $G_t$ is the distribution function of $X(t)$ and $c_1\de\int_{0}^{\infty}\es[\e^{-2\int_{0}^{\infty}G_v(\sigma \Gamma^{(b)}_v)\mathrm dv}]\mathrm db$, which is finite by \cite[Equation 6.7]{abbs2013}.
\begin{rem}
	If we denote $Y_t(s)\de X_{t}(t-s)-X(t)$ for $s\in [0,t]$, where $s\mapsto X_{t}(s)$ is the path followed by the leftmost particule at time $t$, the law of $Y$ is actually the limit of the law of $Y_t$ as $t\rightarrow \infty$, see \cite[Theorem 2.3]{abbs2013}, but we won't need this fact here.
\end{rem}

Conditionally on $Y$ distributed as in (\ref{dens1}), let $\pi$ be a Poisson point process on $[0, \infty)$ with intensity $2(1-G_t(-Y(t)) \mathrm dt$. For each point $t\in \pi$, start an
independent branching Brownian motion $(\mathscr N^*_{t} (s), s \geq 0)$ at position $Y(t)$ conditioned to $\min \mathscr N^*_{t} (t) > 0$ (here these BBM are considered as point processes). By \cite[Theorem 2.3]{abbs2013}, we have the following representation for the decoration process
\[
\mathcal Q \de \delta_0 + \sum_{t\in\pi} \mathscr N^*_{t} (t).
\]

And $\mathcal Q$ is related to the previous section's $\mathcal C$ by
\[
\mathcal C\sd\sum_{x\in \mathcal Q}\delta_{-\frac{1}{\sqrt 2}x}.
\]

We need to explicit a bit more the processes $(\mathscr N^*_{t},t \in\pi)$. Let $(X^y_u(s),s\geq 0, u\in\ns^y)_{y\geq 0}$ be a family of independent branching Brownian motion starting at $0$ with drift $2$ and variance $\sigma^2$. Define $X^y(s)\de\min_{u\in\ns^y}X^y_u(s)$ the position of the leftmost particle at time $s$. Then, conditionally on $Y$, introduce, for $t>0$, the processes $\mathscr N^*_{t}(s)\de \sum_{u\in\ns^t}\delta_{Y(t)+X^{t}_u(s)}$ where the BBM are conditioned to $X^t(t)+Y(t)>0$. The functional of Proposition \ref{sup} may now be expressed as
\begin{align*}
R_\beta &\de \sum_{x\in\mathcal C-\{0\}}\e^{\beta x}
\qquad~~= \sum_{x\in\mathcal Q-\{0\}}\e^{-\frac{\beta}{\sqrt 2} x}\\
&=\sum_{t\in\pi} \sum_{y\in \mathscr N^*_{t}(t)}\e^{-\frac{\beta}{\sqrt 2} y}
=\sum_{t\in\pi}\e^{-\frac{\beta}{\sqrt 2} Y(t)} \sum_{u\in\nt^t}\e^{-\frac{\beta}{\sqrt 2} X_u^t(t)}\\
&= \sum_{t\in\pi}\e^{-\frac{\beta}{\sqrt 2} Y(t)} C_t,
\end{align*}
where $C_t\de\sum_{u\in\nt^t}\e^{-\frac{\beta}{\sqrt 2} X_u^t(t)}$.\\
We want to prove that $0\in\mathrm{supp}(R_\beta)$. The two main ingredients are a rough estimation of the asymptotic of the paths of $Y$, see Appendix \ref{Bessel}, and a coupling with a simpler case were the BBMs arising in $C_t$ are not conditioned.\\

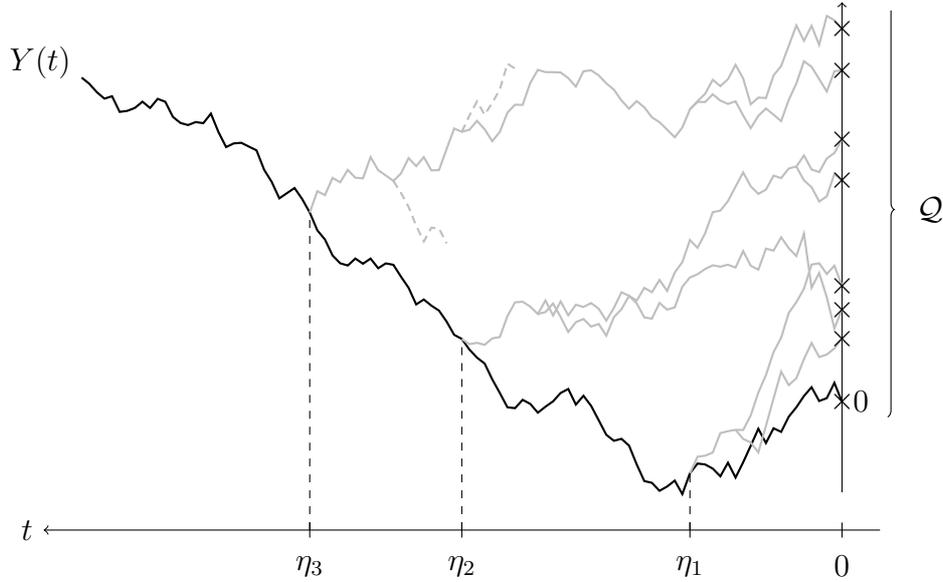
\begin{figure}
	\vspace{-0.5cm}
	\centering
	\subfigure{
		\begin{tikzpicture}[scale=1]
		
		\draw [->] (10.5,-6) -- (-.5,-6);
		\draw (-.5,-6) node[left] {$t$};
		\draw [->] (10,-5.5) -- (10,1);
		%\draw (10,1) node[above] {Decoration Process};
		\draw (0,0.2) node[left] {$Y(t)$};
		\draw (10,-6.1) -- (10,-5.9);
		\draw (10,-6.2) node[below]{$0$};
		
		\draw[color=black,decorate,decoration={brace,raise=0.1cm}]
		(10.5,1) -- (10.5,-4.5) node[right=0.2cm,pos=0.5] {~$\mathcal Q$};
		\draw[white,fill=white] (10.3,.9) rectangle (10.7,1.1);
		
		\draw[thick] (0,0)
		--(0.1,-0.0808541843931) 
		--(0.2,-0.193473930198) 
		--(0.3,-0.278945038333) 
		--(0.4,-0.247119924725) 
		--(0.5,-0.45500290165) 
		--(0.6,-0.44112084627) 
		--(0.7,-0.402244189101) 
		--(0.8,-0.31934902371) 
		--(0.9,-0.402741269425) 
		--(1.0,-0.28168094207) 
		--(1.1,-0.320452229789) 
		--(1.2,-0.520882280485) 
		--(1.3,-0.601873109199) 
		--(1.4,-0.629467082399) 
		--(1.5,-0.58936264852) 
		--(1.6,-0.604786086323) 
		--(1.7,-0.477321642146) 
		--(1.8,-0.723425369087) 
		--(1.9,-0.921546486533) 
		--(2.0,-0.872856510895) 
		--(2.1,-0.864821481412) 
		--(2.2,-0.914972706075) 
		--(2.3,-0.964258155794) 
		--(2.4,-1.22161911698) 
		--(2.5,-1.38340049515) 
		--(2.6,-1.60146085395) 
		--(2.7,-1.53279933369) 
		--(2.8,-1.46424467725) 
		--(2.9,-1.60998371871) 
		--(3.0,-1.78949753412) 
		--(3.1,-2.02560892433) 
		--(3.2,-2.14708001865) 
		--(3.3,-2.35818634405) 
		--(3.4,-2.4621698428) 
		--(3.5,-2.49197283358) 
		--(3.6,-2.3996726446) 
		--(3.7,-2.46852893455) 
		--(3.8,-2.40008391023) 
		--(3.9,-2.52811013868) 
		--(4.0,-2.463825551) 
		--(4.1,-2.48252892493) 
		--(4.2,-2.6400015923) 
		--(4.3,-2.78713571908) 
		--(4.4,-3.00766130202) 
		--(4.5,-2.94266937379) 
		--(4.6,-3.02414616374) 
		--(4.7,-3.08983347875) 
		--(4.8,-3.23508785175) 
		--(4.9,-3.41152227209) 
		--(5.0,-3.46555764733) 
		--(5.1,-3.60657108053) 
		--(5.2,-3.71394984482) 
		--(5.3,-3.79168051318) 
		--(5.4,-4.00101427413) 
		--(5.5,-4.18307479983) 
		--(5.6,-4.37262497869) 
		--(5.7,-4.38360143138) 
		--(5.8,-4.27279890997) 
		--(5.9,-4.32709707401) 
		--(6.0,-4.44400388867) 
		--(6.1,-4.36417244942) 
		--(6.2,-4.29306326602) 
		--(6.3,-4.18190670765) 
		--(6.4,-4.12921825822) 
		--(6.5,-4.33610254854) 
		--(6.6,-4.21911174174) 
		--(6.7,-4.35155911082) 
		--(6.8,-4.5426267794) 
		--(6.9,-4.73619517865) 
		--(7.0,-4.81622857759) 
		--(7.1,-4.7899890311) 
		--(7.2,-4.88081619316) 
		--(7.3,-5.12937363084) 
		--(7.4,-5.35011941549) 
		--(7.5,-5.37356948112) 
		--(7.6,-5.47937070678) 
		--(7.7,-5.42261516828) 
		--(7.8,-5.34429988014) 
		--(7.9,-5.52294575427)
		--(8.0,-5.24627386762) 
		--(8.1,-5.11760261973) 
		--(8.2,-5.13017766331) 
		--(8.3,-5.18491224122) 
		--(8.4,-5.28721957123) 
		--(8.5,-5.10710269986) 
		--(8.6,-5.30420332879) 
		--(8.7,-5.10239618866) 
		--(8.8,-4.88189896597) 
		--(8.9,-4.65707983702) 
		--(9.0,-4.84611563309) 
		--(9.1,-4.65010769476) 
		--(9.2,-4.69829520283) 
		--(9.3,-4.49689202026) 
		--(9.4,-4.37236476191) 
		--(9.5,-4.22235750588) 
		--(9.6,-4.10569421529) 
		--(9.7,-4.28885440601) 
		--(9.8,-4.2509541075) 
		--(9.9,-4.04783107518)
		--(10,-4.3)
		;
		\draw (10,-4.3) node{$\times$};
		\draw (10,-4.3) node[right]{$0$};
		%first branch
		\draw (8,-6.1) -- (8,-5.9);
		\draw (8,-6.2) node[below]{$\eta_1$};
		\draw[dashed] (8,-6) -- (8,-5.24627386762);
		\draw[thick,gray!50] (8.0,-5.24627386762)
		--(8.1,-5.0269937916) 
		--(8.2,-5.01416859434) 
		--(8.3,-4.97194062664) 
		--(8.4,-4.71420548244) 
		--(8.5,-4.68926572482) 
		--(8.6,-4.67146606207) 
		--(8.7,-4.6728437968) 
		--(8.8,-4.47923186699) 
		--(8.9,-4.17596581699) 
		--(9.0,-3.94453176435) 
		--(9.1,-3.63964534335) 
		--(9.2,-3.44936340652) 
		--(9.3,-3.12053940522) 
		--(9.4,-2.97766625558) 
		--(9.5,-2.79938694102) 
		--(9.6,-2.47957592626) 
		--(9.7,-2.46208520741) 
		--(9.8,-2.60454918308) 
		--(9.9,-2.49408314945) 
		--(10.0,-2.76003070867) 
		;
		\draw(10.0,-2.76003070867) node{$\times$};
		
		%\draw[thick] (9.4,-2.97766625558) 
		%--(9.5,-2.86058806193) 
		%--(9.6,-3.10798673476) 
		%--(9.7,-3.24056694578) 
		%--(9.8,-2.92064784903) 
		%--(9.9,-2.78303588787) 
		%--(10.0,-2.51705717874) 
		%;
		%\draw (10.0,-2.51705717874)  node{$\times$};
		
		\draw[thick,gray!50]  (8.6,-4.67146606207)
		--(8.7,-4.79122257849) 
		--(8.8,-4.83888013533) 
		--(8.9,-4.9701962889) 
		--(9.0,-4.61746650283) 
		--(9.1,-4.25481956644) 
		--(9.2,-3.89958277537) 
		--(9.3,-4.078699324) 
		--(9.4,-3.98658662688) 
		--(9.5,-3.73184464491) 
		--(9.6,-3.51876570298) 
		--(9.7,-3.7058122231) 
		--(9.8,-3.6730073148) 
		--(9.9,-3.59057404144) 
		--(10.0,-3.46105019637)
		;
		\draw (10.0,-3.46105019637)  node{$\times$};
		
		%second branch
		\draw (5,-6.1) -- (5,-5.9);
		\draw (5,-6.2) node[below]{$\eta_2$};
		\draw[dashed] (5,-6) -- (5.0,-3.46555764733);
		\draw[thick,gray!50] (5.0,-3.46555764733)
		--(5.1,-3.53474839698) 
		--(5.2,-3.51724943169) 
		--(5.3,-3.54397058781) 
		--(5.4,-3.45850844503) 
		--(5.5,-3.44019418912) 
		--(5.6,-3.16182957864) 
		--(5.7,-2.98074076284) 
		--(5.8,-2.97220525895) 
		--(5.9,-3.03742242542) 
		--(6.0,-3.13507442951) 
		--(6.1,-2.95553332895) 
		--(6.2,-3.15455441358) 
		--(6.3,-2.96896958842) 
		--(6.4,-3.15251343722) 
		--(6.5,-3.01187662587) 
		--(6.6,-3.04224356102) 
		--(6.7,-2.98951087068) 
		--(6.8,-3.06479715668) 
		--(6.9,-3.22271397614) 
		--(7.0,-3.05443872998) 
		--(7.1,-2.84652946005) 
		--(7.2,-2.89484224729) 
		--(7.3,-2.95781209877) 
		--(7.4,-2.78824784068) 
		--(7.5,-2.8462092963) 
		--(7.6,-2.68349704159) 
		--(7.7,-2.82911065192) 
		--(7.8,-2.58542920944) 
		--(7.9,-2.44670909623) 
		--(8.0,-2.15395596788) 
		--(8.1,-2.25690100073) 
		--(8.2,-2.06473266202) 
		--(8.3,-1.78444230701) 
		--(8.4,-1.76417835632) 
		--(8.5,-1.63833207802) 
		--(8.6,-1.44612169556) 
		--(8.7,-1.25690090251) 
		--(8.8,-1.44171429119) 
		--(8.9,-1.54261498876) 
		--(9.0,-1.48485207756) 
		--(9.1,-1.30022778637) 
		--(9.2,-1.33314137567) 
		--(9.3,-1.20534791419) 
		--(9.4,-1.17965833755) 
		--(9.5,-1.36259876921) 
		--(9.6,-1.45317190871) 
		--(9.7,-1.6435326857) 
		--(9.8,-1.57454739188) 
		--(9.9,-1.27847560983) 
		--(10.0,-1.36548619512) 
		;
		\draw (10.0,-1.36548619512)  node{$\times$};
		
		\draw[thick,gray!50] (6.0,-3.13507442951) 
		--(6.1,-3.1092818651259) 
		--(6.2,-2.99606925239) 
		--(6.3,-3.17573786876) 
		--(6.4,-3.37362451396) 
		--(6.5,-3.2429693206) 
		--(6.6,-3.20100422377) 
		--(6.7,-3.3019607257) 
		--(6.8,-3.26641974513) 
		--(6.9,-3.42258835623) 
		--(7.0,-3.17681249979) 
		--(7.1,-3.07075594777) 
		--(7.2,-2.89474535442) 
		--(7.3,-2.99377093013) 
		--(7.4,-3.17940096171) 
		--(7.5,-3.19720852667) 
		--(7.6,-2.98851756829) 
		--(7.7,-3.07733318097) 
		--(7.8,-3.0470973634) 
		--(7.9,-2.77269923076) 
		--(8.0,-2.65098315293) 
		--(8.1,-2.607856434) 
		--(8.2,-2.53796468181) 
		--(8.3,-2.48935809854) 
		--(8.4,-2.50740538843) 
		--(8.5,-2.48154664099) 
		--(8.6,-2.51948531344) 
		--(8.7,-2.29965813371) 
		--(8.8,-2.49465107407) 
		--(8.9,-2.54791787306) 
		--(9.0,-2.2680491527) 
		--(9.1,-2.29512398514) 
		--(9.2,-2.11662313584) 
		--(9.3,-2.26239631813) 
		--(9.4,-2.36622117595) 
		--(9.5,-2.06931809006) 
		--(9.6,-2.78599181365) 
		--(9.7,-2.59155372705) 
		--(9.8,-2.911587211216) 
		--(9.9,-3.32602463414) 
		--(10.0,-3.07300669407);
		\draw (10.0,-3.07300669407)  node{$\times$};
		
		\draw[thick,gray!50] (9.4,-1.17965833755)
		--(9.5,-1.032892835634) 
		--(9.6,-1.2187635468) 
		--(9.7,-1.25345309417) 
		--(9.8,-1.07297782405) 
		--(9.9,-0.999595884) 
		--(10.0,-0.819836862973)
		;
		\draw (10.0,-0.819836862973)  node{$\times$};
		
		%third branch
		\draw (3,-6.1) -- (3,-5.9);
		\draw (3,-6.2) node[below]{$\eta_3$};
		\draw[dashed] (3,-6) -- (3.0,-1.78949753412);
		\draw[thick, gray!50] (3.0,-1.78949753412)
		--(3.1,-1.5063149872) 
		--(3.2,-1.55861301237) 
		--(3.3,-1.32966827572) 
		--(3.4,-1.38840902161) 
		--(3.5,-1.22430982464) 
		--(3.6,-1.32807236979) 
		--(3.7,-1.33583302818) 
		--(3.8,-1.05459595913) 
		--(3.9,-1.22738887935) 
		--(4.0,-1.29770987434) 
		--(4.1,-1.37035134674) 
		--(4.2,-1.27143730388) 
		--(4.3,-1.06715643391) 
		--(4.4,-1.17465080199) 
		--(4.5,-1.05463469436) 
		--(4.6,-1.23043955629) 
		--(4.7,-1.22629489956) 
		--(4.8,-0.94701703745) 
		--(4.9,-0.674839918044) 
		--(5.0,-0.719387759293) 
		--(5.1,-0.691727820149) 
		--(5.2,-0.573948902935) 
		--(5.3,-0.685282576034) 
		--(5.4,-0.843409046666) 
		--(5.5,-0.719777328764) 
		--(5.6,-0.465035596283) 
		--(5.7,-0.359766369719) 
		--(5.8,-0.362043730999) 
		--(5.9,-0.0710365912344) 
		--(6.0,0.102851442889) 
		--(6.1,0.0726584312814) 
		--(6.2,0.0719356427931) 
		--(6.3,0.0676265622517) 
		--(6.4,-0.00691460988806) 
		--(6.5,-0.120005173345) 
		--(6.6,0.0839948530065) 
		--(6.7,0.0961980485335) 
		--(6.8,0.0299113578085) 
		--(6.9,-0.0762231132714) 
		--(7.0,-0.15940596577) 
		--(7.1,-0.304199892262) 
		--(7.2,-0.412680581281) 
		--(7.3,-0.311572199778) 
		--(7.4,-0.39394421978) 
		--(7.5,-0.525869190626) 
		--(7.6,-0.591895633381) 
		--(7.7,-0.689348701481) 
		--(7.8,-0.79205222928) 
		--(7.9,-0.66250140343) 
		--(8.0,-0.420520295382) 
		--(8.1,-0.322529223958) 
		--(8.2,-0.324206884016) 
		--(8.3,-0.221269089192) 
		--(8.4,-0.30483260536) 
		--(8.5,-0.349400778345) 
		--(8.6,-0.31421401229) 
		--(8.7,-0.502012985137) 
		--(8.8,-0.671048266023) 
		--(8.9,-0.506642253234) 
		--(9.0,-0.409244807345) 
		--(9.1,-0.492042960668) 
		--(9.2,-0.577286423468) 
		--(9.3,-0.295088678137) 
		--(9.4,-0.0776758459042) 
		--(9.5,0.216739107329) 
		--(9.6,0.143288811785) 
		--(9.7,-0.0201625410477) 
		--(9.8,-0.185439423781) 
		--(9.9,0.0832585486384) 
		--(10.0,0.0988595782389) 
		;
		\draw (10.0,0.0988595782389)  node{$\times$};
		
		\draw[thick,gray!50] (8.0,-0.420520295382) 
		--(8.1,-0.31235) 
		--(8.2,-0.214162057202) 
		--(8.3,-0.0299759269367) 
		--(8.4,-0.113066996694) 
		--(8.5,-0.0213301422542) 
		--(8.6,0.169415047792) 
		--(8.7,-0.011203079639) 
		--(8.8,-0.354318058699) 
		--(8.9,-0.265176531062) 
		--(9.0,-0.271413108179) 
		--(9.1,0.028319383751) 
		--(9.2,0.153689426101) 
		--(9.3,0.421699095099) 
		--(9.4,0.692096639539) 
		--(9.5,0.488110189202) 
		--(9.6,0.686117776413) 
		--(9.7,0.435030526078) 
		--(9.8,0.817949248739) 
		--(9.9,0.760934626066) 
		--(10.0,0.64639111389) 
		;
		\draw (10.0,0.64639111389)  node{$\times$};
		
		\draw[thick,gray!50,densely dashed] (5.0,-0.719387759293)
		--(5.1,-0.503758916643) 
		--(5.2,-0.316408333828) 
		--(5.3,-0.49227643479) 
		--(5.4,-0.335380368936) 
		--(5.5,-0.156951425399) 
		--(5.6,0.1850749924905) 
		--(5.7,0.1187059807501) 
		;
		
		\draw[thick,gray!50,densely dashed] (4.1,-1.37035134674)
		--(4.2,-1.52919382329) 
		--(4.3,-1.71483306891) 
		--(4.4,-1.96395258015) 
		--(4.5,-2.17310235674) 
		--(4.6,-1.99013355251) 
		--(4.7,-2.00165771174) 
		--(4.8,-2.19788477933)
		;
		\end{tikzpicture} 
	}
	\caption{The point process $\mathcal Q$ obtained with independent BBM issued from the backward path $Y$ at Poissonian times $(\eta_k)$ and conditioned to stay above $0$ at time $0$.}
	\label{fig}
\end{figure}
Let us begin with this simpler case: we omit the last conditioning $X^t(t)+Y(t)>0$ for the BBM, more precisely, let us define 
\begin{align*}
S_\beta \de \sum_{t\in\pi}\e^{-\frac{\beta}{\sqrt 2} Y(t)} D_t,
\end{align*}
where $D_t\de\sumt\e^{-\frac{\beta}{\sqrt 2} X_u^t(t)}$ and the BBM $(X^y_u(s),s\geq 0, u\in\ns^y)_{y\geq 0}$ are independent (and not conditioned as before) and also independent of $Y$ and $\pi$. The following lemma gives a sufficient condition for $0$ being in the support of $\sum_{t\in\pi}\e^{-\beta/\sqrt 2 Y(t)} D_t$. Its proof is postponed to Appendix \ref{tech}.

\begin{lem}\label{support1}
	Let $\mu$ be a Radon measure on $\re_+$ and $\mathcal P$ a $\mathrm{PPP}(\mu)$. Let $(A_t)_{t \in \mathbb R_+}$ be independent positive random variables, independent of $\mathcal P$, and $f$ a positive and measurable function on $\mathbb R_+$, then
	$$\int_{0}^{\infty}\es\big[(f(t)A_t)\wedge 1\big] \mu(\mathrm d t)<\infty \Rightarrow 0 \in \mathrm{supp}\Big(\sum_{t\in\mathcal P} f(t)A_{t}\Big).$$
\end{lem}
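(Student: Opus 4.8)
The claim to establish is that $\p(W<\varepsilon)>0$ for every $\varepsilon\in(0,1)$, where $W\de\sum_{t\in\mathcal P}f(t)A_t$; by monotonicity in $\varepsilon$ this is exactly $0\in\mathrm{supp}(W)$. The only genuine difficulty is that $\mu$ may have infinite total mass, so one cannot simply require $\mathcal P$ to be empty. The plan is to split $\re_+$ into a compact part $[0,T]$, on which $\mu$ is finite since it is Radon, and a far part $(T,\infty)$, on which the contribution of $\mathcal P$ is negligible thanks to the integrability hypothesis, and then to require $\mathcal P$ to avoid $[0,T]$ while being small on $(T,\infty)$.

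Concretely, I would first fix $\varepsilon\in(0,1)$ and, using $\int_0^\infty\es[(f(t)A_t)\wedge 1]\,\mu(\dt)<\infty$, choose $T>0$ large enough that the tail $\delta_T\de\int_T^\infty\es[(f(t)A_t)\wedge 1]\,\mu(\dt)$ satisfies $\delta_T<\varepsilon$. Writing $\mathcal P=\mathcal P_1\sqcup\mathcal P_2$ with $\mathcal P_1\de\mathcal P\cap[0,T]$ and $\mathcal P_2\de\mathcal P\cap(T,\infty)$, these are independent Poisson processes with intensities $\mu|_{[0,T]}$ and $\mu|_{(T,\infty)}$, and $W=W_1+W_2$ with $W_i$ the corresponding partial sums. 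Since $\mu$ is Radon, $\mu([0,T])<\infty$, hence $\p(\mathcal P_1=\varnothing)=\e^{-\mu([0,T])}>0$, and on this event $W=W_2$.

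The core estimate is on $W_2$. Set $\widetilde W_2\de\sum_{t\in\mathcal P_2}\big((f(t)A_t)\wedge 1\big)$. By Campbell's formula for the marked Poisson process $\{(t,A_t):t\in\mathcal P_2\}$ — equivalently, by conditioning on $\mathcal P_2$ and using that the $A_t$ are then independent — one gets $\es[\widetilde W_2]=\int_T^\infty\es[(f(t)A_t)\wedge 1]\,\mu(\dt)=\delta_T$, so Markov's inequality yields $\p(\widetilde W_2\ge\varepsilon)\le\delta_T/\varepsilon<1$, i.e.\ $\p(\widetilde W_2<\varepsilon)>0$. The key observation is then deterministic: on $\{\widetilde W_2<\varepsilon\}$, since $\varepsilon<1$, no single summand of $\widetilde W_2$ equals $1$ (one such summand alone would force $\widetilde W_2\ge 1$), so $f(t)A_t<1$ for every $t\in\mathcal P_2$, hence $(f(t)A_t)\wedge 1=f(t)A_t$ and therefore $W_2=\widetilde W_2<\varepsilon$. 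Thus $\{\widetilde W_2<\varepsilon\}\subset\{W_2<\varepsilon\}$.

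Combining the two pieces and using the independence of $\mathcal P_1$ from $(\mathcal P_2,(A_t)_t)$,
\[
\p(W<\varepsilon)\geq\p\big(\mathcal P_1=\varnothing,\ \widetilde W_2<\varepsilon\big)=\p(\mathcal P_1=\varnothing)\,\p(\widetilde W_2<\varepsilon)>0,
\]
and since $\varepsilon\in(0,1)$ is arbitrary this gives $0\in\mathrm{supp}(W)$. I expect the only delicate point to be the truncation bookkeeping of the previous paragraph — that a small \emph{truncated} sum forces a small \emph{untruncated} sum, which is exactly where $\varepsilon<1$ is used; everything else is a routine combination of the Radon property, a void probability, Campbell's formula and Markov's inequality. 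As a backup I would keep the Laplace-transform route: from $\es[\e^{-\lambda W}]=\exp\big(-\int_0^\infty(1-\es[\e^{-\lambda f(t)A_t}])\,\mu(\dt)\big)$ and $1-\es[\e^{-\lambda f(t)A_t}]\le\es[(\lambda f(t)A_t)\wedge 1]$, a dominated-convergence argument with dominating function $\es[(f(t)A_t)\wedge 1]$ shows $\lambda^{-1}\log\es[\e^{-\lambda W}]\to 0$ as $\lambda\to\infty$, i.e.\ the essential infimum of $W$ is $0$.
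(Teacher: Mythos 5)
Your primary argument is correct, but it takes a genuinely different route from the paper. The paper argues entirely through the Laplace transform: it applies Campbell's formula to get $\es\big[\e^{-\lambda\sum_{t\in\mathcal P}f(t)A_t}\big]=\exp\!\big(-\int_0^\infty\es[1-\e^{-\lambda f(t)A_t}]\,\mu(\dt)\big)$, bounds $\es\big[(1-\e^{-\lambda f(t)A_t})/\lambda\big]\le\es[(f(t)A_t)\wedge 1]$ for $\lambda\geq1$, concludes by dominated convergence that the log-Laplace exponent is $o(\lambda)$, and invokes the characterization (the paper's Lemma~\ref{lapl}) that this is equivalent to $0\in\mathrm{supp}$. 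This is precisely your ``backup'' route. Your main argument instead splits $\mathcal P$ across $[0,T]$ and $(T,\infty)$, uses the Radon property to get a positive void probability on $[0,T]$, controls the truncated far sum $\widetilde W_2$ by Campbell plus Markov, and then observes deterministically that $\widetilde W_2<\varepsilon<1$ forces every summand to be strictly below $1$ so the truncation is never active, hence $W_2=\widetilde W_2$. Both proofs are correct and of comparable length; yours is more elementary and probabilistically concrete in that it avoids the Laplace-transform characterization entirely, while the paper's is slightly cleaner bookkeeping-wise (no need for the separate $\mathcal P_1=\varnothing$ event or the $\varepsilon<1$ de-truncation observation). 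One small bookkeeping note on your version: the independence you need is between $\mathcal P_1$ and the pair $(\mathcal P_2,(A_t)_{t>T})$, which indeed holds because restrictions of a Poisson process to disjoint sets are independent and the $(A_t)$ family is independent of $\mathcal P$; you state this correctly.
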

\noindent
The idea is to apply Lemma \ref{support1} conditionally on a given path $Y$ with $A_t = D_t$, $f_Y(t)\de\e^{-\frac{\beta}{\sqrt 2} Y(t)}$ and  $\mu_Y(\mathrm dt)\de 2(1-G_t(-Y(t)) \mathrm dt$: if conditionally on almost every path $Y$, $0$ is in the support of $S_\beta$, it is therefore in the support of the unconditional law.
Let us prove that the following expectation is integrable w.r.t. $\mathrm dt$ on $\mathbb R_+$ 
\begin{align}\label{integrable}
\es\big[(f_Y(t)D_t)\wedge 1\,|\,Y\big]&=\mathbb P\Big(D_t>\frac{1}{f_Y(t)}\,|\,Y\Big)+f_Y(t)\,\es\big[D_t\ind_{D_t\leq 1/f_Y(t)}\,|\,Y\big].
\end{align}
It will be more convenient for the computations to come to use the normalization of the previous section with $X^y_u(t)=\sqrt2(\sqrt2t-x^y_u(t))$. This way, we have $D_t=\sumt \e^{\beta(x^t_u(t)-\sqrt 2t)}$ and the superadditivity of the function $x\mapsto x^{\beta/\sqrt 2}$ gives
\[
D_t=\sumt \e^{\beta(x^t_u(t)-\sqrt 2t)} \leq \Big(\sumt \e^{\sqrt2(x_u^t(t)-\sqrt 2t)}\Big)^{\frac{\beta}{\sqrt 2}},
\]
then
\begin{align*}
\mathbb P(D_t>\frac{1}{f_Y(t)}\,|\,Y)
&\leq \mathbb P\Bigg(\Big(\sumt \e^{\sqrt2(x_u^t(t)-\sqrt 2t)}\Big)^{\frac{\beta}{\sqrt 2}}>\frac{1}{f_Y(t)}\,|\,Y\Bigg)\\
&=\mathbb P\Big(\sumt \e^{\sqrt2(x_u^t(t)-\sqrt 2t)}>\e^{Y(t)}\,|\,Y\Big)\\
&\leq \e^{-Y(t)},
\end{align*}
the last inequality being a consequence of Markov's inequality and $\es[\sumt \e^{\sqrt2(x_u^t(t)-\sqrt 2t)}\,|\,Y]=1$. The last term is integrable almost surely by Proposition \ref{Yest}.\\
For the second term in (\ref{integrable}), observe that $D_t\leq 1/f_Y(t)$ implies $\e^{\beta(x_u^t(t)-\sqrt 2t)}\leq \e^{\frac{\beta}{\sqrt 2} Y(t)}$ for every $u\in\nt^t$ and the many-to-one lemma, see \cite[Theorem 1.1]{shi2015} for example, yields
\begin{align*}
\es[D_t\ind_{D_t\leq 1/f_Y(t)}\,|\,Y]&\leq \es\Big[\sumt \e^{\beta(x_u^t(t)-\sqrt 2t)}\ind_{x_u^t(t)\leq\sqrt2 t+\frac{1}{\sqrt 2}Y(t)}\,|\,Y\Big]\\
& = \e^t\,\es\left[\e^{\beta(\sqrt t G-\sqrt 2t)}\ind_{\sqrt t G\leq\sqrt2 t+\frac{1}{\sqrt 2}Y(t)}\,|\,Y\right]\\
& = \e^{t-\sqrt 2\beta t}\,\es\left[\e^{\beta\sqrt{t} G}\ind_{G\leq\sqrt{2t}+\frac{1}{\sqrt {2t}}Y(t)}\,|\,Y\right],
\end{align*}
where $G\sim\norm$ is independent from $Y$. The last expectation can be handled with the Gaussian estimate
\[
\es[\e^{\lambda G}\ind_{G\leq x}]\leq \e^{\lambda x-\frac{x^2}{2}}, \qquad \forall \, \lambda>x>0,
\]
whose proof can be found in Appendix \ref{tech}. Proposition \ref{Yest} ensures that almost surely there exists $ t_0= t_0(Y)$ such that for $t\geq t_0$, $\beta\sqrt t>\sqrt{2t}+\frac{1}{\sqrt{2t}}Y(t)$, which implies
\[
\es[D_t\ind_{D_t\leq 1/f_Y(t)}\,|\,Y]  \leq \e^{t-\sqrt 2\beta t} \e^{\beta \sqrt{t}(\sqrt{2t}+\frac{1}{\sqrt{2t} }Y(t))-\frac{1}{2}(\sqrt{2t}+\frac{1}{\sqrt {2t}}Y(t))^2} \leq \e^{(\frac{\beta}{\sqrt 2}-1) Y(t)},
\]
and thus, for $t$ large enough,
\begin{align*}
f_Y(t)\es\big[D_t\ind_{D_t\leq 1/f_Y(t)}\,|\,Y\big]
&\leq \e^{-Y(t)}.
\end{align*}
We finally conclude that the two terms in \eqref{integrable} are almost surely integrable w.r.t to $\mathrm dt$ on $\mathbb R_+$ thanks to Proposition \ref{Yest}. Therefore, recalling that $\mu_Y(\dt)=2(1-G_t(-Y(t)) \mathrm dt$ on $\mathbb R_+$, we have

\begin{equation}\label{simpl}
\int_{0}^{\infty}\es[(f_Y(t)D_t)\wedge 1\,|\,Y]\,\mu_Y(\mathrm dt)\leq
\int_{0}^{\infty}\es[(f_Y(t)D_t)\wedge 1\,|\,Y]\,2\mathrm dt<\infty \quad\text{a.s.}
\end{equation}
and Lemma \ref{support1} shows that $0$ is in the support of $S_\beta$.\\

Let us go back to the original case with $R_\beta$. We now construct a coupling between $R_\beta$ and $S_\beta$. For $t>0$, denote $D^{(1)}_t\de D_t$ and $X_t^{(1)}\de X^t(t)$, and let $((D_t^{(i)},X_t^{(i)}),\, i\geq 1)$ be an i.i.d. sequence. We then redefine $C_t$ as the first $D_t^{(i)}$ such that $X_t^{(i)}+Y(t)>0$. The idea of the following is to use the fact that $C_t$ and $D_t$ coincides with large probability for $t$ large enough in $\pi$.
Let $\eta_1\leq\eta_2\leq...$ denote the atoms of $\pi$ ranked in non-decreasing order and define $T\de\sup\{\eta_i : C_{\eta_i}\neq D_{\eta_i}\}$, we have
\begin{eqnarray*}
&&\int_{0}^{\infty}\es[(f_Y(t)C_t)\wedge 1\,|\,Y] \,\mu_Y(\dt)
\\
&\leq& \int_{0}^{\infty}\es[(f_Y(t)C_t\ind_{t<T})\wedge 1\,|\,Y]\, 2\dt+\int_{0}^{\infty}\es[(f_Y(t)D_t\ind_{t\geq T})\wedge 1\,|\,Y]\, 2\dt
\\
&\leq& 2\int_{0}^{\infty}\p(T>t\,|\,Y) \,\dt+\int_{0}^{\infty}\es[(f_Y(t)D_t)\wedge 1\,|\,Y]\, 2\dt,
\end{eqnarray*}
where the second integral is almost surely finite thanks to Equation (\ref{simpl}). We end this section by proving that the first one is also finite almost surely.

\begin{lem}
$\int_{0}^{\infty}\p(T>t\,|\,Y) \, \dt$ is finite almost surely.
\end{lem}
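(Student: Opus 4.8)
The idea is to read off $T$ from a Poisson thinning of $\pi$, bound the resulting intensity by a crude first–moment estimate on the maximal displacement, and invoke the growth of $Y$ from Proposition~\ref{Yest} to get integrability.

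First I would observe that, by the very definition of the coupling, $C_{\eta_i}\neq D_{\eta_i}$ forces $X^{(1)}_{\eta_i}+Y(\eta_i)\le 0$, i.e.\ $X^{\eta_i}(\eta_i)\le -Y(\eta_i)$, an event which conditionally on $Y$ and on the atom $\eta_i$ has probability $G_{\eta_i}(-Y(\eta_i))$. Since the families $((D^{(j)}_t,X^{(j)}_t))_{j\ge1}$ are independent over $t$ and independent of $(Y,\pi)$, the thinning theorem for Poisson processes shows that, conditionally on $Y$, the set of atoms $\eta_i$ with $X^{\eta_i}(\eta_i)\le -Y(\eta_i)$ is a Poisson point process on $\re_+$ with intensity $2\bigl(1-G_s(-Y(s))\bigr)G_s(-Y(s))\,\ds\le 2G_s(-Y(s))\,\ds$. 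As $\{T>t\}$ forces this thinned process to have a point in $(t,\infty)$, Markov's inequality gives
\[
\p(T>t\mid Y)\le 2\int_t^\infty G_s(-Y(s))\,\ds,
\]
and Tonelli's theorem yields $\int_0^\infty \p(T>t\mid Y)\,\dt\le 2\int_0^\infty s\,G_s(-Y(s))\,\ds$, so it suffices to prove the a.s.\ finiteness of this last integral.

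For that, translate back to our normalization: $X(s)\le -Y(s)$ is the same as $\tilde x(s)\ge A_s$, where $A_s\de\frac{3}{2\sqrt2}\log s+\frac1{\sqrt2}Y(s)$, so $G_s(-Y(s))=\p(\tilde x(s)\ge A_s)$. By Proposition~\ref{Yest}, $Y(s)\to\infty$ a.s., so $A_s\ge 0$ for $s\ge s_0=s_0(Y)$; for such $s$ the many–to–one lemma and the Gaussian tail bound $\p(\mathcal N(0,1)\ge x)\le\e^{-x^2/2}$ give, for every $a\ge 0$,
\[
\p(\tilde x(s)\ge a)\le\es\bigl[\#\{u\in\ns: x_u(s)\ge m_s+a\}\bigr]=\e^{s}\,\p(\mathcal N(0,s)\ge m_s+a)\le s^{3/2}\,\e^{-\sqrt2\,a},
\]
using $m_s=\sqrt2\,s-\frac{3}{2\sqrt2}\log s$. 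Taking $a=A_s$ and $\e^{-\sqrt2 A_s}=s^{-3/2}\e^{-Y(s)}$ gives $G_s(-Y(s))\le\e^{-Y(s)}$ for $s\ge s_0$, whence
\[
\int_0^\infty s\,G_s(-Y(s))\,\ds\le\int_0^{s_0}s\,\ds+\int_{s_0}^\infty s\,\e^{-Y(s)}\,\ds<\infty\quad\text{a.s.},
\]
the last integral being finite because Proposition~\ref{Yest} gives $Y(s)$ of order $\sqrt s$, in particular $Y(s)/\log s\to\infty$.

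The one point to be careful about — and the reason I do not simply quote Bramson's bound (Proposition~\ref{Bram}) — is the constraint $s\ge A_s^2$ there: along the subsequence where $Y(s)$ reaches its typical large values (of order $\sqrt{s\log\log s}$) this constraint fails, so the elementary first–moment estimate above, which holds for all $s\ge 2$ and all $a\ge 0$, is what makes the argument go through.
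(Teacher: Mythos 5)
Your proof is correct, and it deviates from the paper's in two respects. First, where the paper combines a union bound over the atoms of $\pi$ with Campbell's theorem, you observe that the set of atoms $\eta_i$ with $C_{\eta_i}\neq D_{\eta_i}$ is a thinning of $\pi$, so a PPP with intensity $2(1-G_s(-Y(s)))G_s(-Y(s))\,\ds$, and bound $\p(T>t\mid Y)$ by the expected number of thinned points in $(t,\infty)$ followed by Tonelli. This is essentially the same computation packaged differently and yields $\int_0^\infty s\,G_s(-Y(s))\,\ds$ rather than the paper's $\int s(s-t_0)\,\p(C_s\neq D_s\mid Y)\,\ds$; either is fine since the integrand decays superpolynomially. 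Second, and more interestingly, to control $G_s(-Y(s))=\p(\tilde x(s)\ge A_s)$ the paper invokes Bramson's bound (Proposition~\ref{Bram}), which carries the constraint $t\ge A^2$, and circumvents it by bounding the deviation level by $\frac1{\sqrt2}Y(t)\wedge\sqrt t$ at the cost of the factor $c'(A+1)^2\sim t$. You instead use the first-moment estimate from the many-to-one lemma together with the Chernoff tail $\p(\mathcal N(0,1)\ge x)\le\e^{-x^2/2}$, which after absorbing the cross-term into a perfect square gives the clean inequality $\e^s\,\p(\mathcal N(0,s)\ge m_s+a)\le s^{3/2}\e^{-\sqrt2\,a}$ for all $s$ large and $a\ge 0$, with no constraint linking $s$ and $a$. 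Plugging $a=A_s$ makes the $s^{3/2}$ cancel exactly against $\e^{-\sqrt2\cdot\frac{3}{2\sqrt2}\log s}$, giving $G_s(-Y(s))\le\e^{-Y(s)}$ outright, which is slightly sharper and more transparent than the paper's $c's\,\e^{-Y(s)\wedge\sqrt{2s}}$. Your closing remark about why Bramson's bound cannot be applied naively is accurate: by the law of the iterated logarithm for the Bessel part of $Y$, $A_s^2>s$ along a subsequence, so the constraint $s\ge A_s^2$ does genuinely fail; the paper is aware of this and works around it with the $\wedge\sqrt t$ truncation, whereas your elementary estimate sidesteps the issue entirely.
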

\begin{proof}
	$C_t\neq D_t$ occurs when $Y(t)+X^t(t)\leq0$, we thus need to control the probability of such an event. Using $\txtt\de\max_{u\in\nt^t}x^t_u(t)-m_t$, one gets
	\begin{align*}
	\p(Y(t)+X^t(t)\leq 0\,|\,Y)&=\p\big(\txtt\geq\frac{1}{\sqrt 2}Y(t)+\frac{3}{2\sqrt 2}\log(t)\,|\,Y\big)
	\leq \p\big(\txtt\geq\frac{1}{\sqrt 2}Y(t)\wedge \sqrt t\,|\,Y\big),
	\end{align*}
	where $\sqrt t$ is here to fulfill the condition $t\geq A^2$ of Proposition \ref{Bram}. Choose $t_0$ large enough, depending on $Y$, such that $Y(t)\geq0$ for $t\geq t_0$. Then Proposition \ref{Bram} implies
	\[
	\p(Y(t)+X^t(t)\leq0\,|\,Y) \leq c\,\Big(\frac{1}{\sqrt 2}Y(t)\wedge \sqrt t+1\Big)^2\e^{-Y(t)\wedge \sqrt{2t}},
	\qquad \text{for } t\geq t_0.
	\]
	We thus have, for $c'>0$ large enough,
	\[
	\p(C_t\neq D_t\,|\,Y)\leq c' t \e^{-Y(t)\wedge\sqrt{2t}}, \qquad \text{for } t\geq t_0.
	\]
	Applying the union bound, with $t>t_0$, gives
	\begin{align*}
	\p(T>t\,|\,Y,\pi)&= \p(\exists i\geq 1 : \eta_i>t ,C_{\eta_i}\neq D_{\eta_i}\,|\,Y,\pi)
	\leq
	\sum_{i:\eta_i>t} \p(C_{\eta_i}\neq D_{\eta_i}\,|\,Y,\pi)
	\\
	&\leq c'\sum_{i:\eta_i>t}  \eta_i \e^{-Y(\eta_i)\wedge\sqrt{2\eta_i}}.
	\end{align*}
Taking expectation with respect to $\pi$ together with Campbell's theorem, see \cite[Section 3.2]{Kingman93} for example, yields
$$
	\p(T>t\,|\,Y)\leq c'\int_{t}^{\infty}  s \e^{-Y(s)\wedge\sqrt{2s}} \mu_Y(\mathrm ds)
	\leq c'\int_{t}^{\infty} s \e^{-Y(s)\wedge\sqrt{2s}} \,2 \, \ds,
$$
and finally
$$
	\int_{t_0}^{\infty} \p(T>t\,|\,Y)\,\dt
	\leq c'\int_{t_0}^{\infty}\left(\int_{t}^{\infty} s\e^{-Y(s)\wedge\sqrt{2s}} \,2\ds\right)\dt
	= 2c'\int_{t_0}^{\infty} s(s-t_0)\e^{-Y(s)\wedge\sqrt{2s}} \,\ds,
$$
	which is almost surely finite thanks to Proposition \ref{Yest}.
\end{proof}
Finally, Lemma \ref{support1} shows that $0\in\mathrm{supp}(R_\beta)$ and Theorem \ref{thm2} is proved.
%%%%%%%%%%%%%%%%%%%%%%%%%%%%%%%%%%%%%%%%%%%%%%%%%%%%%%%%%

\addcontentsline{toc}{section}{Appendix}
\appendix
\begin{appendix}

%%%%%%%%%%%%%%%%%%%%%%%%%%%%%%%%%%%%%%
\section{Appendix}
%%%%%%%%%%%%%%%%%%%%%%%%%%%%%%%%%%%%%%

%%%%%%%%%%%%%%%%%%%
\subsection{$\rho_{\beta,t}$ and extremal particles}
\label{a}
%%%%%%%%%%%%%%%%%%%

Let us denote $N_{t}^A$ the number of particles whose centered positions are above level $-A$ at time $t$: 
$$N^A_t\de\mathcal{E}_t([-A,+\infty[)=|\{u\in\nt : \txtu\geq -A \}|, \qquad \text{for } A,t\geq0.$$
We will need the following consequence of \cite[Lemma 4.2]{cortineshartunglouidor2019}, where the authors obtained a detailed description of the extreme level sets of the BBM.

\begin{prop}[Cortines, Hartung and Louidor \cite{cortineshartunglouidor2019}]\label{nat}
	There exists $C>0$ such that, for all $A\geq0$, 
	$\es[N_t^A\,;\,\txt \leq A] \leq C(A+1)^2 \e^{\sqrt2 A}.$
\end{prop}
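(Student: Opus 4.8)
Here is how I would prove Proposition \ref{nat}.

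\medskip
\noindent\emph{Strategy.} The plan is to obtain the bound by reading off \cite[Lemma 4.2]{cortineshartunglouidor2019} in the normalisation of the present paper. It is worth recording first why the restriction $\{\txt\le A\}$ is indispensable: by the many-to-one lemma, $\es[N_t^A]=\es[\mathcal E_t([-A,\infty))]=\e^{t}\,\p(\sqrt t\,G\ge m_t-A)$ with $G\sim\norm$, and a direct computation with the Gaussian tail shows that $\es[N_t^A]$ is of the order of $t\,\e^{\sqrt2 A}$ as $t\to\infty$. The unrestricted first moment therefore grows linearly in $t$, the growth being produced by atypical configurations in which some particle climbs high at an intermediate time and seeds many descendants near $m_t$. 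It is exactly the event $\{\txt\le A\}$ — which forces the ancestral lines of the particles above $m_t-A$ to stay under a Bramson-type ceiling — that suppresses this phenomenon and replaces the factor $t$ by the polynomial correction $(A+1)^2$; so the whole point is to import \cite[Lemma 4.2]{cortineshartunglouidor2019} after a change of variables.

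\medskip
Concretely, I would first fix the dictionary between the two normalisations. The authors of \cite{cortineshartunglouidor2019} use $X_u(t)\de\sqrt2(\sqrt2 t-x_u(t))$ and study the leftmost particle $X(t)=\min_{u\in\nt}X_u(t)$; then $\txtu=-\frac1{\sqrt2}X_u(t)+\frac3{2\sqrt2}\log t$, so that $\{\txtu\ge-A\}=\{X_u(t)\le\sqrt2 A+\tfrac32\log t\}$ and $\{\txt\le A\}=\{X(t)\ge-\sqrt2 A+\tfrac32\log t\}$. In these terms \cite[Lemma 4.2]{cortineshartunglouidor2019} is a first-moment estimate for the number of particles lying within an $O(A)$-window of the leading edge on the event that the edge is not atypically advanced — equivalently, that the ancestral lines of those particles stay below a Bramson barrier — and it carries the sharp polynomial prefactor; this is the step where the factor $t$ is killed, through a ballot-type estimate for Brownian paths constrained to stay below the barrier. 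Translating back, one checks that the $\tfrac3{2\sqrt2}\log t$ shifts and the drift and variance conventions of \cite{cortineshartunglouidor2019} only affect the multiplicative constant, and that the contribution to $\es[N_t^A\,;\,\txt\le A]$ coming from particles whose ancestral line \emph{violates} the barrier is of lower order uniformly in $t$, which is controlled by a Brownian ballot estimate along the lineage together with Bramson's bound on the global maximum, Proposition \ref{Bram}. One thus obtains $\es[N_t^A\,;\,\txt\le A]\le C(A+1)^2\e^{\sqrt2 A}$ for $t$ large, and the remaining bounded range of $t$ is absorbed into $C$ through $N_t^A\le|\nt|$ and $\es[|\nt|]=\e^t$.

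\medskip
The genuine obstacle lies inside \cite[Lemma 4.2]{cortineshartunglouidor2019}: proving, uniformly in $t$, a sharp first moment for the particles near the leading edge whose ancestral trajectories respect a Bramson barrier — it is precisely the barrier that converts the $t$-growth of the unrestricted first moment into $(A+1)^2$. On the side of the present paper the work is only bookkeeping: carrying the change of variables through carefully (minimum versus maximum, the $\log t$ centring, the drift and variance conventions) and checking that on $\{\txt\le A\}$ the barrier-violating particles contribute a lower-order term, for which a union bound over particles together with Bramson's estimate, Proposition \ref{Bram}, and a Brownian ballot estimate are enough.
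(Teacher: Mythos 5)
The paper gives no proof of this proposition at all: it is imported verbatim from Cortines--Hartung--Louidor with a bare citation to their Lemma~4.2, and the change of normalisation is left implicit. Your sketch is consistent with that — you correctly set up the dictionary $\txtu=-\tfrac1{\sqrt2}X_u(t)+\tfrac3{2\sqrt2}\log t$, correctly identify $\{\txt\le A\}$ with a lower barrier on the leftmost particle in the CHL convention, and correctly locate the substance of the estimate inside the cited lemma rather than in the present paper — so you have reproduced the paper's approach (citation plus bookkeeping), with the added benefit of explaining why the restriction $\{\txt\le A\}$ is what replaces the $t$-growth of $\es[N_t^A]$ by the $(A+1)^2$ prefactor.
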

We can now prove Proposition \ref{pro1}.
\begin{prop} Let $\eta>0$, then $\underset{A\rightarrow \infty}{\mathrm{lim}} \underset{t\rightarrow \infty}{\mathrm{limsup~}} \mathbb P(\rho_{\beta,t}^{]-\infty,-A]}(\mathbb R_+)>\eta )=0.$
\end{prop}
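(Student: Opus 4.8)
The plan is to bound $\rho_{\beta,t}^{]-\infty,-A]}(\re_+)$ by a sum over integer levels, each term being controlled by the level-set estimate of Proposition \ref{nat}, after first discarding the unlikely event that the maximal displacement $\txt$ is large via Bramson's bound (Proposition \ref{Bram}).

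First I would decompose the region $(-\infty,-A]\subset(-\infty,-\lfloor A\rfloor]$ into the unit shells $(-\ell-1,-\ell]$ for integers $\ell\geq\lfloor A\rfloor$. A particle $u\in\nt$ contributing to the $\ell$-th shell satisfies $\e^{\beta\txtu}\leq \e^{-\beta\ell}$ and $\txtu\geq-(\ell+1)$, hence is counted by $N_t^{\ell+1}$, so that
\[
\rho_{\beta,t}^{]-\infty,-A]}(\re_+)\;\leq\;\sum_{\ell\geq \lfloor A\rfloor}\e^{-\beta\ell}\,N_t^{\ell+1}.
\]
Then, for $\ell\geq\lfloor A\rfloor$ one has $\{\txt\leq\lfloor A\rfloor\}\subseteq\{\txt\leq\ell+1\}$, so Proposition \ref{nat} gives $\es[N_t^{\ell+1}\,;\,\txt\leq\lfloor A\rfloor]\leq C(\ell+2)^2\e^{\sqrt2(\ell+1)}$, uniformly in $t$. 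Combining this with Markov's inequality yields
\[
\p\big(\rho_{\beta,t}^{]-\infty,-A]}(\re_+)>\eta\big)\;\leq\;\p(\txt>\lfloor A\rfloor)\;+\;\frac{C\e^{\sqrt2}}{\eta}\sum_{\ell\geq \lfloor A\rfloor}(\ell+2)^2\,\e^{-(\beta-\sqrt2)\ell}.
\]

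It then remains to take $\limsup_{t\to\infty}$ and let $A\to\infty$. For $t\geq\max(\lfloor A\rfloor^2,2)$, Proposition \ref{Bram} bounds $\p(\txt>\lfloor A\rfloor)$ by $c(\lfloor A\rfloor+1)^2\e^{-\sqrt2\lfloor A\rfloor}$, which tends to $0$ as $A\to\infty$; and since $\beta>\beta_c=\sqrt2$, the series $\sum_{\ell\geq 0}(\ell+2)^2\e^{-(\beta-\sqrt2)\ell}$ converges, so its tail from $\lfloor A\rfloor$ also tends to $0$. This proves the claim. The only point requiring a little care is that the truncation in Proposition \ref{nat} must be applied at level $\ell+1$ (matching the index of $N_t^{\ell+1}$) rather than at the fixed level $A$ — which is precisely why one decomposes $(-\infty,-A]$ into shells indexed by $\ell\geq\lfloor A\rfloor$ rather than handling it in one piece. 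I do not expect a genuine obstacle here: all the substance is contained in Propositions \ref{Bram} and \ref{nat}, the rest being a routine geometric-series estimate using $\beta>\sqrt2$.
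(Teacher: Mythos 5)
Your argument is correct, and it uses exactly the same two inputs as the paper (the level-set first-moment bound of Proposition~\ref{nat} and Bramson's tail bound of Proposition~\ref{Bram}), but with a somewhat leaner structure. The paper introduces an auxiliary "density'' event $E_{A,t}=\{\exists n\geq 0 : N_t^{A+n}\geq \e^{(\sqrt2+\delta)(A+n)}\}$ and splits the probability into three pieces: a Markov bound on each level set shows $\p(E_{A,t},\txt\leq A)$ is small, and on $E_{A,t}^c$ the mass $\rho_{\beta,t}^{]-\infty,-A]}(\re_+)$ is deterministically bounded by a convergent geometric series in $\e^{(\sqrt2+\delta-\beta)A}$, so that the event $\{\rho_{\beta,t}^{]-\infty,-A]}(\re_+)>\eta\}\cap E_{A,t}^c$ is empty for $A$ large. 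You instead apply Markov's inequality \emph{once} directly to the restricted expectation $\es[\rho_{\beta,t}^{]-\infty,-A]}(\re_+);\txt\leq\lfloor A\rfloor]$, bound each shell via Proposition~\ref{nat} applied at the matching truncation level $\ell+1$, and finish with the same geometric-series tail. The two proofs are equivalent in content and cost; yours avoids introducing the extra parameter $\delta$ and the intermediate event, at the mild price of taking a first moment of the full sum rather than getting a deterministic bound on a high-probability event. Either way, everything hinges on $\beta>\beta_c=\sqrt2$ making $\sum_\ell (\ell+2)^2\e^{-(\beta-\sqrt2)\ell}$ summable, and on the fact that Proposition~\ref{nat}'s bound is uniform in $t$.
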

\begin{proof}
Fix $\delta>0$ such that $\sqrt2+\delta<\beta$ and define the event $E_{A,t}\coloneqq\{\exists n\geq 0 : N_t^{A+n}\geq \e^{(\sqrt2+\delta)(A+n)}\}$ for $A,t\geq0$. We can break down the probability in the proposition the following way
\begin{align}\label{majo}
\mathbb P(\rho_{\beta,t}^{]-\infty,-A]}(\mathbb R_+) >\eta)& \leq \mathbb P(\rho_{\beta,t}^{]-\infty,-A]}(\mathbb R_+) >\eta,\, \txt\leq A)+\mathbb P(\txt> A)\notag \\ 
& \leq \mathbb P(E_{A,t},\,\txt \leq A)+\mathbb P(\rho_{\beta,t}^{]-\infty,-A]}(\mathbb R_+) >\eta,\,E_{A,t}^c)+c(A+1)^2 \e^{-\sqrt 2 A},
\end{align}
whenever $t\geq A^2$ thanks to Proposition \ref{Bram}.\\
Let us begin with the first term in (\ref{majo}). Proposition \ref{nat} and Markov's inequality yield
\begin{equation}
\mathbb P(N_t^A\geq \e^{(\sqrt2+\delta)A},\,\txt \leq A) \leq C(A+1)^2 \e^{-\delta A},
\label{markov}
\end{equation}
such that
\begin{align}\label{Eat}
\mathbb P(E_{A,t},\,\txt \leq A) & \leq \sum_{n\geq 0}
\mathbb P(N_t^{A+n}\geq \e^{(\sqrt2+\delta)(A+n)},\,\txt \leq A)\notag\\
& \leq \sum_{n\geq 0}
\mathbb P(N_t^{A+n}\geq \e^{(\sqrt2+\delta)(A+n)},\,\txt \leq A+n)\notag\\
& \leq C\e^{-\delta A} \sum_{n\geq 0} (A+n+1)^2 \e^{-\delta n},
\end{align}
where the last term does not depend on $t$ and tends to $0$ when $A\rightarrow\infty$.\\
For the second term in (\ref{majo}), observe that on the event $E_{A,t}^c$, one has
\begin{align*}
	\rho_{\beta,t}^{]-\infty,-A]}(\mathbb R_+) & \leq \sum_{n\geq 0} N_t^{A+n+1} \e^{-\beta(A+n)}
	 \leq \sum_{n \geq 0} \e^{(\sqrt2+\delta)(A+n+1)} \e^{-\beta(A+n)}\\
	& \leq \frac{\e^{\sqrt 2 +\delta}}{1-\e^{\sqrt 2 +\delta-\beta}}\e^{(\sqrt 2+\delta-\beta)A} \underset{A\rightarrow \infty}{\longrightarrow} 0.
\end{align*}
\noindent
Therefore, for $A$ large enough, and independent of $t$, $\mathbb P(\rho_{\beta,t}^{]-\infty,-A]}(\mathbb R_+) >\eta,\,E_{A,t}^c)$ vanishes. Combining this fact with (\ref{Eat}) in the previous inequality (\ref{majo}) concludes the proof.
\end{proof}
The following lemma shows that the Gibbs measure under $\beta>\beta_c$ is concentrated on the extremal particles.
\begin{lem} Let $\eta>0$, then
	$\lim\limits_{A\to \infty} \limsup\limits_{t\to\infty} \p(\mathcal G_{\beta,t}(\nt([-A,A]^c))>\eta)=0$.
\end{lem}
\begin{proof}
	We can express $\mathcal G_{\beta,t}$ with $\rho_{\beta,t}$ as follows
	\[
	\mathcal G_{\beta,t}(\nt([-A,A]^c))=\frac{\rho^{[-A,A]^c}_{\beta,t}(\mathbb R_+)}{\rho_{\beta,t}(\mathbb R_+)}.
	\]
	Recall, from Proposition \ref{mass}, that 
	$\rho_{\beta,t}(\mathbb{R}_+) {\xrightarrow[]{\mathrm{(d)}}} \rho_{\beta}(\mathbb R_+),$ when $t \to \infty$,
	where the limit is a positive random variable.
	If $\varepsilon>0$, one can therefore find $\delta>0$ and $t_0>0$ such that, for all $t>t_0$,
	$\p(\rho_{\beta,t}(\mathbb R_+)>\delta)>1-\varepsilon.$
	Then
$
\p(\mathcal G_{\beta,t}(\nt([-A,A]^c))>\eta)\leq \varepsilon + \p(\rho^{[-A,A]^c}_{\beta,t}(\mathbb R_+)>\eta\delta).
$
Taking the limsup in $t$ and then the limit in $A$ using \eqref{rho} concludes the proof of the lemma.
\end{proof}	

%%%%%%%%%%%%%%%%%%%
\subsection{Asymptotics of $Y$ and Bessel Processes}
\label{Bessel}
%%%%%%%%%%%%%%%%%%%
	
The aim of this section is to prove the following proposition.
\begin{prop}\label{Yest}
Let $Y$ be the backward path introduced in Section \ref{lapmet} and $\varepsilon>0$, then  
\[
\p\big(\exists \,  t_0\in\re_+: t^{\frac{1}{2}-\varepsilon}<Y(t)<t^{\frac{1}{2}+\varepsilon} \text{ for all } t\geq t_0\big)=1.
\]
\end{prop}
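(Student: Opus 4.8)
The plan is to remove the exponential weight from the definition~(\ref{dens1}) of $Y$ --- at no cost, since $\e^{-2\int_0^\infty G_v(\sigma\Gamma^{(b)}_v)\mathrm dv}\le 1$ --- and thereby reduce the statement to classical escape-rate estimates for a three-dimensional Bessel process. Write $\mathcal L_b$ for the law on $C(\re_+,\re)$ of $-\sigma\Gamma^{(b)}$: under $\mathcal L_b$ the trajectory coincides with $-\sigma B$ on $[0,T_b]$ and satisfies $Y(t)=\sigma R_{t-T_b}-\sigma b$ for $t\ge T_b$, where $R$ is a three-dimensional Bessel process started from $0$ and $T_b$ is $\mathcal L_b$-a.s.\ finite. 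Let $N_\varepsilon$ denote the complement of the event in the proposition, i.e.\ the set of paths $y$ for which there is \emph{no} $t_0$ with $t^{1/2-\varepsilon}<y(t)<t^{1/2+\varepsilon}$ for all $t\ge t_0$. Abbreviating $w_b\de \e^{-2\int_0^\infty G_v(\sigma\Gamma^{(b)}_v)\mathrm dv}\in[0,1]$ and integrating the joint density~(\ref{dens1}) over $b$,
\[
\p(Y\in N_\varepsilon)=\frac{1}{c_1}\int_0^\infty\es\big[w_b\,\ind_{-\sigma\Gamma^{(b)}\in N_\varepsilon}\big]\,\mathrm db\ \le\ \frac{1}{c_1}\int_0^\infty\mathcal L_b(N_\varepsilon)\,\mathrm db ,
\]
so it is enough to prove $\mathcal L_b(N_\varepsilon)=0$ for every $b>0$. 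Since $b$ and $T_b$ are $\mathcal L_b$-a.s.\ finite, an elementary comparison (this is where it matters that we may allow $\varepsilon'<\varepsilon$) shows this follows once we know that a three-dimensional Bessel process satisfies, almost surely, $s^{1/2-\varepsilon'}<R_s<s^{1/2+\varepsilon'}$ for all large $s$, for every $\varepsilon'\in(0,\varepsilon)$.

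For the upper bound I would use that $R_s^2-3s$ is a martingale with $\mathrm{Var}(R_s^2)=6s^2$: by Doob's $L^2$-inequality applied on the dyadic blocks $[2^n,2^{n+1}]$, the quantity $\p\big(\sup_{s\le 2^{n+1}}R_s^2\ge 2^{n(1+2\varepsilon')}\big)$ is summable in $n$, and Borel--Cantelli gives $R_s<s^{1/2+\varepsilon'}$ eventually. For the lower bound I would use that $1/R_s$ is a nonnegative supermartingale (special to dimension three) with $\es[1/R_s]=c\,s^{-1/2}$: the maximal inequality for nonnegative supermartingales gives $\p\big(\inf_{s\ge 2^n}R_s\le 2^{n(1/2-\varepsilon')}\big)=\p\big(\sup_{s\ge 2^n}1/R_s\ge 2^{-n(1/2-\varepsilon')}\big)\le c\,2^{-n\varepsilon'}$, again summable, so almost surely $\inf_{s\ge 2^n}R_s>2^{n(1/2-\varepsilon')}$ for all large $n$, whence $R_s>s^{1/2-2\varepsilon'}$ eventually. (Both facts are weak consequences of the law of the iterated logarithm and of the Dvoretzky--Erd\H{o}s integral test for transient Bessel processes; the crude polynomial rates needed here only require the elementary estimates above.) Combining the two bounds and relabelling $\varepsilon'$ gives $\mathcal L_b(N_\varepsilon)=0$ for all $b>0$, and then the displayed inequality yields $\p(Y\in N_\varepsilon)=0$, proving the proposition.

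I do not expect a serious obstacle here. The one genuinely non-elementary ingredient is the lower escape rate of the transient Bessel process --- the fact that $R$ does not come back too close to $0$ --- and the only point that needs a little care is the bookkeeping in the reduction above: the finite random quantities $b$ and $T_b$ appearing in the definition of $Y$ have to be absorbed into the $\varepsilon$-room, which is why one proves the Bessel bounds with exponents $\tfrac12\pm\varepsilon'$ for some $\varepsilon'<\varepsilon$ rather than with $\tfrac12\pm\varepsilon$ directly.
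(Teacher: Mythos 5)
Your proof is correct, and it follows the same overall strategy as the paper: use the density formula \eqref{dens1} to transfer the question to the reference process $-\sigma\Gamma^{(b)}$ (you do this by the crude bound $w_b\le 1$, the paper by absolute continuity conditional on $-\inf Y = b$ --- equivalent for a probability-zero event), then establish two-sided polynomial escape rates for a three-dimensional Bessel process started at $0$, absorbing the finite quantities $b$ and $T_b$ by working with $\varepsilon'<\varepsilon$. Where you differ is in how the Bessel bounds are obtained: the paper appeals to the Dvoretzky--Erd\H{o}s integral test for the lower escape rate and to the law of the iterated logarithm for the upper bound, whereas you give self-contained martingale arguments --- Doob's $L^2$ maximal inequality for the compensated square $R_s^2-3s$ on dyadic blocks for the upper bound, and the maximal inequality for the nonnegative supermartingale $1/R_s$ together with $\es[1/R_s]=cs^{-1/2}$ for the lower bound. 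Your route is more elementary and does not rely on the two named theorems; the paper's route is shorter to state because it cites optimal classical results rather than rederiving the (much weaker) polynomial rates actually needed. One small bookkeeping point worth noting in a write-up: the supermartingale argument genuinely needs to be started at a positive time $t_0=2^n>0$ (since $1/R_0=\infty$), which you do correctly, but this deserves an explicit word.
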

\begin{proof}
We begin by proving the aformentioned statement for a three-dimensional Bessel process $R$ started at $0$, represented as $R_t\de |B_t|$, $B$ being a standard Brownian motion in $\mathbb R^3$. Let $f$ be an increasing function on $(0,\infty)$, the Dvoretzky-Erd\"os test, see \cite[Theorem 3.22]{morters_peres_2010}, states that
\[
\int_{1}^{\infty}f(t)t^{-\frac{3}{2}}\dt<\infty \qquad \text{if and only if} \qquad
\liminf\limits_{t \to \infty} \frac{|B_t|}{f(t)}=\infty  \quad \text{a.s.}
\]
Considering the function $f(t)\de t^{1/2-\varepsilon}$ yields 
\[\liminf\limits_{t \to \infty} \frac{|B_t|}{t^{\frac{1}{2}-\varepsilon}}=\infty\quad \text{a.s.},\]
thus $\p(\exists t_0>0 : t^{1/2-\varepsilon}<R_t\text{ for } t\geq t_0)=1$.
The law of the iterated logarithm applied to the components of $B$, see \cite[Theorem 8.5.1]{durrett2019} for instance, gives
\[
\limsup\limits_{t \to \infty}\frac{|B_t|}{t^{\frac{1}{2}+\varepsilon}}= 0\quad \text{a.s.},
\]
and therefore
\begin{equation}\label{bessel_estimate}
\p\big(\exists t_0\in\re_+: t^{\frac{1}{2}-\varepsilon}<R_t<t^{\frac{1}{2}+\varepsilon} \text{ for } t\geq t_0\big)=1.
\end{equation}
Now recall that $\Gamma^{(b)}$ is defined as follows, for $b>0$,
$$
\Gamma^{(b)}_s = \left\{
\begin{array}{ll}
B_{s}, & \mbox{if } s \in [0,T_b], \\
b-R_{s-T_b}, & \mbox{if } s\geq T_b,
\end{array}
\right.
$$
where $T_b\de\inf\{t\geq 0 : B_t=b\}$ and $R$ is a three-dimensional Bessel process started at $0$ and independent from $B$. Moreover 
\begin{equation*}\label{dens}
\mathbb P(Y\in A,-\inf_{s\geq 0}Y(s)\in \mathrm db)=\frac{1}{c_1}\es\big[\e^{-2\int_{0}^{\infty}G_v(\sigma \Gamma^{(b)}_v)\mathrm dv}\ind_{-\sigma\Gamma^{(b)}\in A}\big],
\end{equation*}
for any measurable set $A$ of $ C(\re_+,\re)$.
Consequently, conditionally on $-\inf_{s\geq 0}Y(s)=b$, the law of $Y$ is absolutely continuous with respect to the law of $-\sigma \Gamma^{(b)}$. And using the fact that $\Gamma^{(b)}(s)=b-R_{s-T_b}$ for $s\geq T_b$, it is not hard to see that if we let $A=\{ f\in C(\re_+,\re) : \exists t_0\in\re_+ ~ \forall t\geq t_0, ~ t^{1/2-\varepsilon}<f(t)< t^{1/2+\varepsilon} \}$, we have $\p(-\sigma\Gamma^{(b)}\in A)=1$ for every $b>0$, thanks to (\ref{bessel_estimate}). We finally obtain
\[
\p(Y\in A)=\int_{0}^{\infty}\p(Y\in A\,|-\inf_{s\geq 0}Y(s)\in \mathrm db)\p(-\inf_{s\geq 0}Y(s)\in \mathrm db)=1.
\]
\end{proof}
%%%%%%%%%%%%%%%%%%%
\subsection{Auxiliary results}\label{tech}
%%%%%%%%%%%%%%%%%%%

Let $X$ be a non-negative random variable and define the log-Laplace transform of $X$, which is the real number $\phi(\lambda)$ verifying
$$\e ^{-\phi(\lambda)}\underset{}{=}\es\big[\e^{-\lambda X}\big], \qquad \forall\lambda\geq0.$$
The following lemma characterizes the fact that $\mathbb P(X<\varepsilon)>0$ for all $\varepsilon>0$, i.e. that $0$ belongs to the support of the law of $X$ denoted supp$(X)$.
\begin{lem}\label{lapl} If $X$ is a non-negative random variable then
	$0 \in \mathrm{supp}(X) \Leftrightarrow \phi(\lambda) =o(\lambda) \text{ when }\lambda \rightarrow \infty.$
\end{lem}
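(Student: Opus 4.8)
The plan is to prove the two implications separately by elementary bounds on the Laplace transform. First I would record the basic fact that $\phi(\lambda)\ge 0$ for every $\lambda\ge 0$: since $X\ge 0$ we have $\e^{-\lambda X}\le 1$, hence $\es[\e^{-\lambda X}]\le 1$ and thus $\phi(\lambda)\ge 0$. In particular, $\phi(\lambda)=o(\lambda)$ is equivalent to $\limsup_{\lambda\to\infty}\phi(\lambda)/\lambda\le 0$.

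For the implication $0\in\mathrm{supp}(X)\Rightarrow\phi(\lambda)=o(\lambda)$, I would fix $\varepsilon>0$; by assumption $\p(X<\varepsilon)>0$, and restricting the expectation to the event $\{X<\varepsilon\}$ gives
\[
\e^{-\phi(\lambda)}=\es[\e^{-\lambda X}]\ge \es\big[\e^{-\lambda X}\ind_{X<\varepsilon}\big]\ge \e^{-\lambda\varepsilon}\,\p(X<\varepsilon).
\]
Taking logarithms and dividing by $\lambda$ yields $\phi(\lambda)/\lambda\le \varepsilon-\lambda^{-1}\log\p(X<\varepsilon)$, so that $\limsup_{\lambda\to\infty}\phi(\lambda)/\lambda\le\varepsilon$; letting $\varepsilon\downarrow 0$ and combining with $\phi\ge 0$ gives $\phi(\lambda)/\lambda\to 0$, as wanted.

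For the converse I would argue by contraposition: if $0\notin\mathrm{supp}(X)$, then there is $\varepsilon_0>0$ with $\p(X<\varepsilon_0)=0$, i.e. $X\ge\varepsilon_0$ almost surely. Consequently $\e^{-\phi(\lambda)}=\es[\e^{-\lambda X}]\le\e^{-\lambda\varepsilon_0}$ for all $\lambda\ge 0$, hence $\phi(\lambda)\ge\varepsilon_0\lambda$ and $\phi(\lambda)$ is not $o(\lambda)$. There is no genuine obstacle here: the statement is elementary, and the only point requiring a little care is keeping track of the directions of the inequalities and of the sign of $\phi$.
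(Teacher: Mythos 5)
Your proof is correct and matches the paper's argument essentially line for line: the same restriction of the expectation to $\{X<\varepsilon\}$ for the forward direction, and the same contrapositive bound $\es[\e^{-\lambda X}]\le\e^{-\lambda\varepsilon_0}$ when $X\ge\varepsilon_0$ a.s. The only (harmless) addition is the preliminary remark that $\phi\ge 0$.
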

\begin{proof}
	Assume that $0\in\mathrm{supp}(X)$, then for every $\varepsilon>0$, $\es[\e^{-\lambda X}]\geq \mathbb P(X< \varepsilon)\e^{-\varepsilon \lambda}>0$, therefore $\phi(\lambda)\leq\lambda\varepsilon-\log\p(X<\varepsilon)$. And if $0\notin\mathrm{supp}(X)$, then pick $\varepsilon>0$ such that $X>\varepsilon$ a.s. so that $\es[\e^{-\lambda X}]\leq \e^{-\lambda\varepsilon}$ and $\phi(\lambda)\geq\lambda\varepsilon$.
\end{proof}

\begin{lem}\label{support}
	Let $\mu$ be a Radon measure on $\re_+$ and $\mathcal P$ a $\mathrm{PPP}(\mu)$. Let $(A_t)_{t \in \mathbb R_+}$ be independent positive random variables, independent of $\mathcal P$, and $f$ a positive and  measurable function on $\mathbb R_+$, then
$$\int_{0}^{\infty}\es\big[(f(t)A_t)\wedge 1\big] \mu(\mathrm d t)<\infty \,\Rightarrow \, 0 \in \mathrm{supp}\Big(\sum_{t\in\mathcal P} f(t)A_{t}\Big).$$
\end{lem}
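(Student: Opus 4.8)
The plan is to characterize the membership $0\in\mathrm{supp}(X)$, where $X\de\sum_{t\in\mathcal P}f(t)A_t$, through the behaviour of its log-Laplace transform $\phi$ and to invoke Lemma \ref{lapl}: it is enough to establish $\phi(\lambda)=o(\lambda)$ as $\lambda\to\infty$.

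First I would realize the collection $(t,A_t)_{t\in\mathcal P}$ rigorously as a \emph{marked} Poisson point process: to each atom $t$ of $\mathcal P$ one attaches an independent mark $A_t$ with law $\nu_t$, and the resulting point process on $\re_+\times(0,\infty)$ is a PPP with intensity $\mu(\mathrm dt)\,\nu_t(\mathrm da)$ (this is how one should read the uncountable family $(A_t)_{t\in\re_+}$, and it is exactly the structure produced in the applications, where $\nu_t$ is the law of $D_t$). Applying Campbell's exponential formula to the non-negative function $(t,a)\mapsto \lambda f(t)a$ then gives, for every $\lambda\ge 0$,
\[
\e^{-\phi(\lambda)}=\es\big[\e^{-\lambda X}\big]=\exp\Big(-\int_{0}^{\infty}\es\big[1-\e^{-\lambda f(t)A_t}\big]\,\mu(\mathrm dt)\Big),
\]
so that $\phi(\lambda)=\int_{0}^{\infty}\es[1-\e^{-\lambda f(t)A_t}]\,\mu(\mathrm dt)$.

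Next, from the elementary bound $1-\e^{-x}\le x\wedge 1$ for $x\ge 0$, one obtains $\phi(\lambda)\le \int_{0}^{\infty}\es[(\lambda f(t)A_t)\wedge 1]\,\mu(\mathrm dt)$. The inequality $\tfrac1\lambda\big((\lambda x)\wedge 1\big)\le x\wedge 1$, valid for all $x\ge 0$ and all $\lambda\ge 1$ (check the two cases $\lambda x\le 1$ and $\lambda x>1$), shows that for $\lambda\ge 1$ the map $t\mapsto \tfrac1\lambda\es[(\lambda f(t)A_t)\wedge 1]$ is dominated by $\es[(f(t)A_t)\wedge 1]$, which is $\mu$-integrable by hypothesis, while it is also bounded by $1/\lambda$ and hence tends to $0$ pointwise in $t$ as $\lambda\to\infty$. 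Dominated convergence then yields $\phi(\lambda)/\lambda\to 0$, i.e. $\phi(\lambda)=o(\lambda)$, and Lemma \ref{lapl} gives $0\in\mathrm{supp}(X)$, as desired.

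I do not expect a genuine obstacle here; the only points deserving care are the construction of the marks as a marked PPP so that Campbell's formula legitimately applies (thereby bypassing the uncountability of the index set $\re_+$), and the verification of the two-case inequality used for domination. As a side remark, running the same computation with $\lambda\to 0$ shows $\phi(0^+)=0$ by dominated convergence, hence $X<\infty$ almost surely, so $X$ is indeed a bona fide non-negative random variable.
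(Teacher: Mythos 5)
Your proof is correct and follows essentially the same route as the paper: compute the Laplace transform via Campbell's exponential formula, bound $\phi(\lambda)/\lambda$ by the integral of $\es[(f(t)A_t)\wedge 1]$, and apply dominated convergence together with Lemma \ref{lapl}. Your explicit realization as a marked PPP and the intermediate bound $1-\e^{-x}\le x\wedge 1$ are small presentational variants of the paper's "take expectation over $(A_t)$ first, then apply Campbell" and its direct inequality, with no substantive difference.
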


\begin{proof}
	Taking expectation with respect to $(A_t)$ first and using Campbell's theorem, see \cite[Section 3.2]{Kingman93}, leads to the Laplace transform 
	$$\es\left[\e^{-\lambda \sum_{t\in\mathcal P} f(t)A_{t}}\right] = \e^{-\int_{0}^{\infty}\es[1-\e^{-\lambda f(t)A_t}]\mu(\mathrm dt)}.$$
	Let $\phi(\lambda)$ denote the log-Laplace transform of $\sum_{t\in\mathcal P} f(t)A_{t}$ so that
	\begin{align*}
	\frac{1}{\lambda}\phi(\lambda)=\int_{0}^{\infty}\es\left[\frac{1-\e^{-\lambda f(t)A_t}}{\lambda}\right] ~\mu(\mathrm dt).
	\end{align*}
	The dominated convergence theorem used twice together with the inequality $$\es\left[\frac{1-\e^{-\lambda f(t)A_t}}{\lambda}\right]\leq \es\big[(f(t)A_t)\wedge 1\big], \qquad \forall\lambda\geq 1,$$ show that $\phi(\lambda)/\lambda$ tends to $0$ when $\lambda \rightarrow \infty$ and Lemma \ref{lapl} concludes the proof.
\end{proof}

\begin{lem}
	Let $G\sim\mathcal N(0,1)$ and $\lambda>x>0$, then $\es[\e^{\lambda G}\ind_{G\leq x}]\leq \e^{\lambda x-\frac{x^2}{2}}$.
\end{lem}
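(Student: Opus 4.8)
The plan is a direct computation by completing the square. First I would write
\[
\es\big[\e^{\lambda G}\ind_{G\leq x}\big]=\int_{-\infty}^{x}\frac{1}{\sqrt{2\pi}}\,\e^{\lambda g-g^2/2}\,\mathrm dg,
\]
and use the identity $\lambda g-\tfrac{g^2}{2}=\tfrac{\lambda^2}{2}-\tfrac{(g-\lambda)^2}{2}$ to factor out $\e^{\lambda^2/2}$ and recognize the remaining integral as the distribution function of a standard Gaussian evaluated at $x-\lambda$. This yields
\[
\es\big[\e^{\lambda G}\ind_{G\leq x}\big]=\e^{\lambda^2/2}\,\p(G\leq x-\lambda)=\e^{\lambda^2/2}\,\p(G\geq \lambda-x),
\]
where the last equality uses the symmetry of the standard Gaussian law and the hypothesis $\lambda-x>0$.

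Next I would bound the tail probability $\p(G\geq\lambda-x)$ by the standard sub-Gaussian estimate $\p(G\geq u)\leq \e^{-u^2/2}$, valid for every $u\geq 0$, which itself follows from Markov's inequality applied to $\e^{uG}$, namely $\p(G\geq u)\leq \e^{-u^2}\es[\e^{uG}]=\e^{-u^2}\e^{u^2/2}=\e^{-u^2/2}$. Taking $u=\lambda-x>0$ gives
\[
\es\big[\e^{\lambda G}\ind_{G\leq x}\big]\leq \e^{\lambda^2/2}\,\e^{-(\lambda-x)^2/2}=\e^{\lambda x-x^2/2},
\]
since $\tfrac{\lambda^2}{2}-\tfrac{(\lambda-x)^2}{2}=\lambda x-\tfrac{x^2}{2}$, which is exactly the claimed inequality.

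There is essentially no obstacle here: the argument is entirely mechanical once the square is completed. The only place where the assumption $\lambda>x>0$ is used is to guarantee $\lambda-x>0$, so that $\p(G\leq x-\lambda)$ is a genuine tail probability on which the sub-Gaussian bound bites; without this one would only recover the trivial estimate $\e^{\lambda^2/2}$.
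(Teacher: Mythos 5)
Your proof is correct and follows essentially the same route as the paper: complete the square to factor out $\e^{\lambda^2/2}$, rewrite the remaining integral as a Gaussian tail probability $\p(G\geq \lambda-x)$, and invoke the standard sub-Gaussian tail bound $\p(G\geq u)\leq \e^{-u^2/2}$. The only difference is that you additionally justify the tail bound via Markov's inequality applied to $\e^{uG}$, whereas the paper states it as known.
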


\begin{proof}
Observe that
$$
\es\big[\e^{\lambda G}\ind_{G\leq x}\big]=\int_{-\infty}^{x} \e^{\lambda u}\e^{-\frac{u^2}{2}} \frac{\mathrm du}{\sqrt{2\pi}}
	=\e^{\lambda^2/2}\int_{-\infty}^{x} \e^{-\frac{(u-\lambda)^2}{2}} \frac{\mathrm du}{\sqrt{2\pi}}
	=\e^{\lambda^2/2}\int_{\lambda -x}^{\infty} \e^{-\frac{t^2}{2}} \frac{\mathrm dt}{\sqrt{2\pi}},
$$	
and the last integral is simply $\mathbb P(G>\lambda -x)$ which is bounded by $\e^{-\frac{(\lambda-x)^2}{2}}$.
\end{proof}
\noindent The following lemma is called the Gaussian integration by parts, see \cite[Equation (A.17)]{talagrand2011} for a proof.
\begin{lem}
	\label{IPP} 
	Let $X=(X_i)_{i\in I}$ be a centered Gaussian vector where $I$ is finite. Then, for any $C^1$ function $F \colon \re^I \rightarrow \re$ of moderate growth at infinity, we have for every $i\in I$
	\begin{equation*}
	\es \left[ X_i F(X) \right] 
	= \sum_{j\in I}  \es \left[ X_i X_j \right] \es \left[\partial_j F(X) \right].
	\end{equation*}
\end{lem}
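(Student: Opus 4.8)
The plan is to prove the identity first under the extra assumption that the covariance matrix $\Sigma\de(\es[X_iX_j])_{i,j\in I}$ is invertible, by integrating by parts against the Gaussian density, and then to remove this assumption by a small Gaussian perturbation.

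In the nondegenerate case $X$ has a smooth density $p_\Sigma$ on $\re^I$ satisfying $\nabla p_\Sigma(x)=-\Sigma^{-1}x\,p_\Sigma(x)$; left-multiplying by $\Sigma$ gives the pointwise relation $x_i\,p_\Sigma(x)=-\sum_{j\in I}\Sigma_{ij}\,\partial_j p_\Sigma(x)$. Substituting this into $\es[X_iF(X)]=\int_{\re^I}x_iF(x)\,p_\Sigma(x)\,\dx$ and integrating by parts once in each variable $x_j$, I would obtain
\[
\es[X_iF(X)]=\sum_{j\in I}\Sigma_{ij}\int_{\re^I}\partial_j F(x)\,p_\Sigma(x)\,\dx=\sum_{j\in I}\es[X_iX_j]\,\es[\partial_j F(X)].
\]
Here the boundary contributions in the integration by parts vanish because $F$ has at most polynomial growth while $p_\Sigma$ and its first derivatives decay like a Gaussian, which simultaneously ensures absolute convergence of all the integrals involved.

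For a general centered Gaussian vector $X$, I would let $G$ be a standard Gaussian vector on $\re^I$ independent of $X$ and apply the previous step to $X^\varepsilon\de X+\varepsilon G$, whose covariance $\Sigma+\varepsilon^2\mathrm{Id}$ is positive definite for every $\varepsilon>0$. This yields $\es[X_i^\varepsilon F(X^\varepsilon)]=\sum_{j\in I}(\Sigma_{ij}+\varepsilon^2\delta_{ij})\,\es[\partial_j F(X^\varepsilon)]$, and it only remains to let $\varepsilon\to0$. Since $X^\varepsilon\to X$ almost surely and $F,\partial_j F$ are continuous, I would bound $|F(X^\varepsilon)|$, $|\partial_j F(X^\varepsilon)|$ and $|X_i^\varepsilon|$ uniformly in $\varepsilon\in(0,1]$ by a fixed polynomial in $|X|+|G|$, which has finite moments of all orders, and conclude by dominated convergence on both sides.

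I expect the only real difficulty to be bookkeeping: pinning down precisely what ``moderate growth'' should mean (it is enough that $F$ and its first partial derivatives are $O(|x|^N)$ for some $N$) so that every integral above converges, the boundary terms are genuinely zero, and an $\varepsilon$-uniform integrable majorant is available for the limiting step. An alternative route, which avoids the degeneracy issue altogether, is to verify the identity first for the exponentials $F(x)=\e^{i\langle t,x\rangle}$ --- differentiating $\es[\e^{i\langle t,X\rangle}]=\e^{-\frac12\langle t,\Sigma t\rangle}$ in $t_i$ gives it immediately --- and then to extend it to arbitrary $F$ by Fourier inversion; this merely trades the perturbation step for an approximation argument in the test function, so I would rather keep the density-plus-perturbation proof.
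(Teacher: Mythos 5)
Your proof is correct. The paper itself does not supply a proof of this lemma; it simply cites \cite[Equation (A.17)]{talagrand2011}. The argument found there (and in most spin-glass references) is different from yours: one first proves the scalar identity $\es[gF'(g)]=\es[F'(g)]$ for $g\sim\mathcal N(0,1)$ by a one-dimensional integration by parts, and then writes the general centered Gaussian vector as $X=Ag$ with $g$ a standard Gaussian vector, applying the scalar identity coordinatewise together with the chain rule to get $\es[X_iF(X)]=\sum_k A_{ik}\es[g_k F(Ag)]=\sum_{j,k}A_{ik}A_{jk}\es[\partial_jF(X)]=\sum_j\es[X_iX_j]\es[\partial_jF(X)]$. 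That route handles a degenerate covariance $\Sigma$ for free, since the representation $X=Ag$ never requires $\Sigma$ to be invertible, whereas your density-based integration by parts needs the nondegenerate case first and then the $\varepsilon$-perturbation and dominated-convergence step you describe. Both proofs are sound; the reduction to dimension one is simply a bit more economical.

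One remark on your alternative via characteristic functions: differentiating $\es[\e^{i\langle t,X\rangle}]=\e^{-\frac12\langle t,\Sigma t\rangle}$ does give the identity immediately for $F(x)=\e^{i\langle t,x\rangle}$ and sidesteps degeneracy, but passing from these test functions to an arbitrary $C^1$ function of moderate growth is not a literal Fourier inversion (such $F$ need not be integrable), so one would have to approximate $F$ by Schwartz functions on compacts and control the tails, which in effect reproduces the moment/domination bookkeeping you already do. You are right to prefer the density-plus-perturbation version of the argument.
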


\end{appendix}

%%%%%%%%%%%%%%%%%%%%%%%%%%%%%%%%%%%%%%%%%%%%%%%%%%%%%%%%%

\subsection*{Acknowledgements} I wish to thank my supervisor Olivier Zindy for introducing me to this subject and for useful discussions. I would like to also thank Zhan Shi for explaining precisely to me the description of the distribution  of the decoration process obtained in \cite{abbs2013}.

%%%%%%%%%%%%%%%%%%%%%%%%%%%%%%%%%%%%%%%%%%%%%%%%%%%%%%%%%

\addcontentsline{toc}{section}{References}

\bibliographystyle{abbrv}
\bibliography{biblio}

\end{document}